\documentclass{amsart}

\newenvironment{thmenum}{

\begin{enumerate}}
{\end{enumerate}}

\usepackage[dvips]{graphicx}
\usepackage{epsfig, latexsym, amsfonts, amsthm, amsmath, pstricks, mathtools}
\newcommand{\powfield}[1]{( \hspace{-1.6pt} ( {#1} ) \hspace{-1.6pt} )}

\renewenvironment{proof}[1][]{\vskip-\lastskip\par\vskip6pt plus2pt
minus0pt\par%
\noindent\textit{Proof.}\enspace\ignorespaces}{\hfill$\Box$\par\vskip6pt
plus2pt minus0pt}


\numberwithin{equation}{section}

\newtheorem{theorem}[equation]{Theorem}

\newtheorem{lemma}[equation]{Lemma}

\newtheorem{proposition}[equation]{Proposition}

\newtheorem{corollary}[equation]{Corollary}

\theoremstyle{definition}
\newtheorem{definition}[equation]{Definition}

\newtheorem{remark}[equation]{Remark}

\newtheorem{remarks}[equation]{Remarks}

\newtheorem{question}[equation]{Question}

\newtheorem{conventions}[equation]{Conventions}

\newtheorem{notation}[equation]{Notation}

\newtheorem{plain}[equation]{}



\begin{document}
\title{Compact Totally Disconnected Moufang Buildings}
\author[T. Grundh\"ofer, L. Kramer, H. Van Maldeghem and R. M. Weiss]{Theo 
Grundh\"ofer, Linus Kramer, Hendrik Van Maldeghem and\\ Richard M. Weiss}
\address{T. Grundh\"ofer \\ Institute for Mathematics \\
         University of W\"urzburg \\
         Am Hubland \\
         97074 W\"urzburg, Germany}
\email{grundhoefer@mathematik.uni-wuerzburg.de}
\address{L. Kramer \\ Institute for Mathematics \\
         University of M\"unster \\
         Einsteinstr. 62 \\
         48149 M\"unster, Germany}
\email{linus.kramer@math.uni-muenster.de}
\address{H. Van Maldeghem \\ Department of Mathematics \\
	Ghent University \\
        Krijgslaan 281 \\
	9000 Ghent, Belgium}
\email{hvm@cage.ugent.be}
\address{R. M. Weiss \\ Department of Mathematics \\
         Tufts University \\
         503 Boston Avenue \\
         Medford, MA 02155, USA}
\email{rweiss@tufts.edu}
\keywords{Moufang property, compact building, locally compact group}
\subjclass[2000]{Primary: 20E42; Secondary: 20G25, 22F50, 51H99}
\thanks{This work was partially supported by the SFB 478 in M\"unster.
The second and fouth authors are partially supported by DFG research grant KR 1668/7-1.}

\begin{abstract}
Let $\Delta$ be a spherical building each of whose
irreducible components is infinite, has rank 
at least~$2$ and satisfies the
Moufang condition.
We show that $\Delta$ can be given the structure of a topological building that is
compact and totally disconnected precisely when 
$\Delta$ is the building at infinity of a locally finite 
affine building.
\end{abstract}

\maketitle

\section{Introduction}\label{000}
\noindent
Generalizing earlier work by Kolmogorov and Pontryagin \cite{kolmogoroff,pontrjagin},
Salzmann \cite{szm}, and Burns and Spat\-zier \cite{burns},
Knarr and two of the present authors showed 
that an infinite compact (locally) connected building without factors of rank~1
whose topological automorphism group
acts transitively on the chambers must be the building associated with the parabolic
subgroups of a semisimple Lie group;
see \cite{knarr}, \cite{knarr2}, \cite{kramer-diss}, \cite[Ch.~7]{kramerhabil}. In particular,
such a building is the spherical building at infinity of a unique
Riemannian symmetric space of noncompact type. (See Section~\ref{999} for
more details.)

The goal of this paper is to extend this result
to {\it totally disconnected} compact spherical buildings.
Our results are as follows. 
(All buildings in this paper are thick. 
By {\it Bruhat-Tits building}, we mean an irreducible affine building of dimension ${\it l}$ at
least~$2$ whose building at infinity satisfies the Moufang condition; see
Definition~\ref{thm1x}, Conventions~\ref{abc83} and Section~\ref{thm1p} below.)

\begin{theorem}\label{thm1}
Let $\Delta$ be an infinite irreducible 
spherical building of rank ${\it l}$ at least~$2$. Suppose, furthermore,
that $\Delta$ satisfies the Moufang condition 
if ${\it l}=2$. Then the following hold.
\begin{thmenum}
\item $\Delta$ can be given the structure of a totally disconnected compact building
if and only if it is the building at infinity of a locally finite
Bruhat-Tits building $X$ of dimension ${\it l}$.
\item If $\Delta$ is the building at infinity of a locally finite
affine building $X$ of dimension ${\it l}$, 
then $X$ is unique, the topology on $\Delta$ giving it the structure
of a compact building is unique, this topology 
is uniquely determined by the canonical ${\rm CAT}(0)$ metric on $X$,
the set of chambers of
$\Delta$ is totally disconnected and, in fact, homeomorphic to the Cantor set,
and the natural map from $\mathrm{Iso}(X)$ to $\mathrm{Aut}(\Delta)$, each 
equipped with the compact-open topology, is
an isomorphism of topological groups.
\end{thmenum}
\end{theorem}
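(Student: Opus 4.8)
The plan is to prove both implications of part~(i) through the Bruhat--Tits theory of valuated root data, which sets up a dictionary between affine buildings having a prescribed Moufang spherical building $\Delta$ at infinity and the valuations of the root group datum of $\Delta$. Since $\Delta$ is irreducible of rank $l\ge 2$ and Moufang (automatically so when $l\ge 3$ by Tits's theorem on thick irreducible spherical buildings of rank at least $3$, and by hypothesis when $l=2$), it carries a root group datum $\{U_\alpha\}_{\alpha}$ indexed by the roots of an apartment. A valuation of this datum is a compatible family of filtrations $U_{\alpha,r}$ ($r\in\mathbb R$) satisfying the usual axioms, and the theory produces from each such valuation an affine building $X$ with $\partial_\infty X\cong\Delta$, with $X$ locally finite exactly when the valuation is discrete and has finite residues. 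The heart of the matter is therefore to match the topological hypotheses on $\Delta$ with the discreteness and finiteness of a valuation.

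For the ``if'' direction I would start from a locally finite Bruhat--Tits building $X$ of dimension $l$ and put on $\Delta=\partial_\infty X$ the cone topology associated with the canonical $\mathrm{CAT}(0)$ metric: after fixing a base vertex $o$, each boundary point is the endpoint of a unique geodesic ray from $o$, and two rays are close when they agree on a long initial segment. Local finiteness makes $X$ a proper metric space, so $\partial_\infty X$ is compact; restricting to the chambers of $\Delta$, which correspond to the sectors based at $o$, gives a compact chamber space, and one checks that the projection and adjacency maps are continuous, so this is a topological building. Total disconnectedness follows because distinct chambers are already separated by the finite combinatorial datum recording which sector-panel they run through at each distance $n$ from $o$; since $\Delta$ is thick and infinite the chamber space has no isolated points, and being compact, metrizable, perfect and totally disconnected it is homeomorphic to the Cantor set.

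The ``only if'' direction is where the real work lies. Assuming $\Delta$ carries a compact, totally disconnected building topology, I would transport this topology to each root group $U_\alpha$ through its continuous action on the chamber space, so that each $U_\alpha$ becomes a totally disconnected compact topological group. Such a group admits a neighbourhood basis of the identity consisting of open compact subgroups, and I would organize these into a decreasing filtration; the compatibility of these filtrations across the various root groups, controlled by the (continuous) commutator and conjugation relations of the root group datum, is what must be verified to produce a genuine valuation. Compactness of $U_\alpha$ should force each filtration step to have finite index, giving finite residues, while total disconnectedness together with thickness should force the valuation to be discrete; feeding this valuation into the Bruhat--Tits construction yields a locally finite affine building $X$ with $\partial_\infty X\cong\Delta$. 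I expect the main obstacle to be exactly this step: extracting the valuation axioms from purely topological input, and in particular normalizing the independently chosen filtrations on the $U_\alpha$ so that they are mutually compatible.

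For part~(ii), uniqueness of $X$ follows from the uniqueness up to equivalence of the valuation extracted above, since equivalent valuations give isomorphic affine buildings. To see that the topology on $\Delta$ is unique and equal to the cone topology of the $\mathrm{CAT}(0)$ metric, I would argue that any compact building topology reproduces, via the construction just described, the same valuation, hence the same $X$, and so must coincide with the canonical cone topology determined by $X$; the Cantor-set description of the chamber space then follows as in the ``if'' direction. Finally, the natural homomorphism $\mathrm{Iso}(X)\to\mathrm{Aut}(\Delta)$ has trivial kernel, since a nontrivial isometry fixing $\partial_\infty X$ pointwise would fix every direction at infinity and thus split off a Euclidean factor, contradicting irreducibility of $X$; and it is surjective because a topological automorphism of $\Delta$ preserves the canonically reconstructed valuation, hence induces an automorphism of the valuated root datum, which lifts to an isometry of $X$ by the functoriality of the construction. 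Continuity in both directions, and hence the assertion that this is an isomorphism of topological groups for the compact-open topologies, comes from the compatibility of the cone topology on $\Delta$ with the metric topology on $X$.
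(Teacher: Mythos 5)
Your ``if'' direction and the formal skeleton of part (ii) are essentially sound, but the ``only if'' direction of part (i) --- which is the entire content of the theorem --- rests on a step you have not supplied and which I do not believe can be supplied in the soft form you describe. You propose to transport the compact totally disconnected topology to the root groups $U_\alpha$ (note: these are locally compact but \emph{not} compact, being homeomorphic to punctured panels) and then to organize their compact open subgroups into a valuation of the root datum. The obstacle you yourself flag --- ``extracting the valuation axioms from purely topological input'' and ``normalizing the independently chosen filtrations so that they are mutually compatible'' --- is not a technical loose end but the whole problem. A valuation in the sense of Bruhat--Tits requires maps $\varphi_\alpha:U_\alpha^*\to\mathbb{Z}$ whose level sets satisfy exact numerical compatibilities under commutators and under the reflections $\mu(u)$; a filtration by compact open subgroups carries no canonical indexing, and whether a Moufang building admits \emph{any} valuation of its root datum is governed by delicate algebraic conditions on the parameter system (for instance, the exceptional quadrangles of type $E_6$, $E_7$, $E_8$, $F_4$ and the Moufang octagons never admit one over a local field). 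A purely topological argument cannot see these conditions, so without a concrete construction your plan does not constitute a proof.

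The paper takes a completely different route around this. It reduces to an irreducible rank-$2$ residue $\Gamma$ (Corollary~\ref{ResiduesAreCompact}), invokes the Tits--Weiss classification of Moufang polygons to exhibit inside a panel of $\Gamma$ a compact projective line over the defining parameter structure $k$ (Propositions~\ref{com1x}--\ref{com30}), proves a topological Hua-type theorem (Theorem~\ref{TopHua}) showing that a compact projective line forces $k$ to be a nondiscrete locally compact totally disconnected division ring, hence a local field by the classification of such rings (Proposition~\ref{com7}); it then appeals to the existence and uniqueness theorems for affine buildings over fields complete with respect to a discrete valuation (Theorem~\ref{abc1}, resting on the case-by-case $\nu$-compatibility verifications of \cite{weiss3} and the exclusion of the cases ${\mathcal Q}_{\mathcal D}$, ${\mathcal Q}_{\mathcal E}$, ${\mathcal Q}_{\mathcal F}$ and ${\mathcal O}$ over local fields). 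The uniqueness of the topology likewise is not the formal consequence you suggest: it is proved panel by panel (Corollary~\ref{pop1} and Proposition~\ref{com9}) using the uniqueness of the valuation up to equipollence together with the open mapping theorem, again case by case through the classification. If you want to salvage your outline, you would need either to carry out the direct construction of a valuation from the topology (a substantial new result) or to follow the paper in first identifying the defining field.
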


Combining this with the aforementioned results, we have the following.

\begin{theorem}\label{thm2}
Let $\Delta$ be a compact spherical building each of whose irreducible factors
is infinite, has rank at least~$2$ and satisfies the Moufang condition.
Then the following hold.
\begin{thmenum}
\item $\Delta$ is the building at infinity
of a product of irreducible Riemannian symmetric spaces of noncompact type and rank
at least~$2$ and locally finite Bruhat-Tits buildings of dimension at least~$2$. 
\item The compact topology of $\Delta$ is 
unique up to conjugation by field automorphisms on the irreducible
Riemannian symmetric factors belonging to complex simple Lie groups.
\item If none of the Riemannian symmetric irreducible factors belongs to a 
complex simple Lie group, then the compact topology
of $\Delta$ is unique and every abstract automorphism is continuous.
\end{thmenum}
\end{theorem}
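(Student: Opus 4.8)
The plan is to derive Theorem~\ref{thm2} from Theorem~\ref{thm1} together with the classification of infinite compact connected Moufang buildings recalled in the introduction \cite{knarr,kramerhabil}. The first step is a reduction to the irreducible case. A thick spherical building decomposes canonically as a direct product $\Delta=\Delta_1\times\cdots\times\Delta_k$ of irreducible buildings, and a chamber of $\Delta$ is just a tuple of chambers of the factors. I would check that a compact topology on $\Delta$ restricts to a compact topology on each $\Delta_i$ (the projections are continuous and the chamber space of $\Delta$ is the product of those of its factors), so that each $\Delta_i$ is an infinite irreducible compact Moufang building of rank at least~$2$ and the hypotheses of both Theorem~\ref{thm1} and the connected classification apply to it.

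The decisive step is a dichotomy: \emph{each irreducible compact factor is either connected or totally disconnected}. I would prove this by descending to panels. Every panel of $\Delta_i$ is a compact Moufang set and is therefore coordinatized by a non-discrete locally compact field (or a closely related algebraic structure), and such a field is either connected --- hence $\mathbb{R}$ or $\mathbb{C}$ --- or totally disconnected, with no intermediate possibility. Because the Coxeter diagram of $\Delta_i$ is connected and the coordinatizing data of panels of adjacent type are tied together by the commutator relations among the root groups, the same alternative must hold at every panel at once. Connectivity then propagates to the whole chamber space: any two chambers are joined by a gallery and one moves inside a single panel continuously, so if all panels are connected the chamber space is path-connected, while if all panels are totally disconnected so is the chamber space. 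I expect this dichotomy to be the main obstacle, since it requires recovering the topological field from the abstract Moufang structure and verifying that the given topology on $\Delta_i$ induces the expected topology on the root groups.

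With the dichotomy in hand, each irreducible factor falls into exactly one of two cases. If $\Delta_i$ is connected, the results of Knarr and two of the present authors \cite{knarr,kramerhabil} identify it as the building at infinity of an irreducible Riemannian symmetric space $X_i$ of noncompact type and rank at least~$2$; the chamber-transitivity required there is furnished by the strong transitivity of the group generated by the root groups, which is built into the Moufang hypothesis. If $\Delta_i$ is totally disconnected, Theorem~\ref{thm1} identifies it as the building at infinity of a locally finite Bruhat-Tits building $X_i$ whose dimension equals the rank of $\Delta_i$. Since the building at infinity of a product of $\mathrm{CAT}(0)$ spaces is the spherical join of the individual buildings at infinity, assembling the factors yields $\Delta=(X_1\times\cdots\times X_k)_\infty$, which is statement~(i).

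For the uniqueness assertions~(ii) and~(iii) I would again argue factor by factor. On each totally disconnected factor the compact topology is unique and all abstract automorphisms are continuous by Theorem~\ref{thm1}(ii): such an automorphism extends to an isometry of the locally finite building $X_i$ and hence acts continuously for the canonical boundary topology. On each connected factor the abstract automorphisms are, by the standard rigidity for buildings over fields, products of continuous algebraic automorphisms with automorphisms of the coordinatizing field, so discontinuity can enter only through a wild automorphism of that field. Among the Riemannian symmetric factors this occurs precisely for those defined over $\mathbb{C}$, namely the factors belonging to a complex simple Lie group, since $\mathrm{Aut}(\mathbb{C})$ contains discontinuous elements whereas the fields attached to the remaining factors (such as $\mathbb{R}$, with trivial automorphism group) admit only continuous automorphisms. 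Hence for the complex factors the topology is determined only up to conjugation by a field automorphism, giving~(ii), while if no complex factor occurs the topology is unique and every abstract automorphism is continuous, giving~(iii).
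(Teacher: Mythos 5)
Your overall route is the same as the paper's: decompose into irreducible factors, show each factor is either connected or totally disconnected, feed the connected factors into the Grundh\"ofer--Knarr--Kramer/Burns--Spatzier classification (Theorem~\ref{ConnectedClass} and Proposition~\ref{uniquenessconnectedcase}) and the totally disconnected ones into Theorem~\ref{thm1}, and reassemble. The one place you genuinely diverge is the dichotomy, which you single out as ``the main obstacle'' and propose to prove by coordinatizing panels by locally compact fields. The paper does not need any of that: Proposition~\ref{pop2} already gives, for \emph{any} irreducible compact spherical building of rank at least~$2$ (no Moufang hypothesis), that the chamber set is either connected or totally disconnected, by a purely topological argument (a disconnected compact panel is totally disconnected without isolated points by a result from Kramer's dissertation on compact polygons, and this propagates through irreducible rank~$2$ residues and the Schubert-cell coordinates). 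Your field-theoretic route would also work but is considerably heavier, and in the exceptional rank~$2$ cases (${\mathcal Q}_{\mathcal D}$, ${\mathcal Q}_{\mathcal E}$, ${\mathcal Q}_{\mathcal F}$) the panel is not a projective line over a field, so ``closely related algebraic structure'' hides real work that the paper's Section~7 only carries out under the standing totally disconnected assumption. One small point you should make explicit for (ii) and (iii): uniqueness of the topology on a connected factor requires ruling out that the \emph{same abstract} factor also carries a totally disconnected compact topology; the paper does this in Proposition~\ref{uniquenessconnectedcase} by observing that the defining field would then have to be simultaneously one of $\mathbb R,\mathbb C,\mathbb H,\mathbb O$ and (the center of) a local field, which is impossible. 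Your proposal compares two topologies on a connected factor only under the tacit assumption that both are connected.
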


\begin{definition}\label{thm1x}
Let $\Delta$ an irreducible spherical building of rank at least~$2$. 
For each root $\alpha$ of each apartment, we denote by $U_\alpha$ the corresponding
root group. This is the subgroup of ${\rm Aut}(\Delta)$ consisting of all
elements acting trivially on all panels containing two chambers in $\alpha$.
The building $\Delta$ {\it satisfies the Moufang condition} (equivalently,
$\Delta$ {\it is Moufang}) if 
for each root $\alpha$ of each apartment, the root group $U_\alpha$
acts transitively on the set of all apartments of $\Delta$ that contain
$\alpha$. See \cite[11.2]{weiss1}.
\end{definition}

If $\Delta$ is an irreducible spherical building, then 
by \cite{tits-spherical}
and \cite[11.6]{weiss1}, $\Delta$ is automatically Moufang if its rank~${\it l}$
is at least~3. The hypothesis in Theorem~\ref{thm1} that 
$\Delta$ is Moufang if ${\it l}=2$ is, however, essential. 
In \cite{grund1}, for example, it is shown that compact totally
disconnected projective planes exist that do not have a continuous epimorphism
to a finite projective plane and hence cannot be the building at 
infinity of a locally finite affine building. 

The following question is presently open. In the (locally)
connected case the answer is in
affirmative in both cases by \cite{burns} and \cite{knarr,knarr2}.

\begin{question}
{\em Let $\Delta$ be an irreducible totally disconnected compact building
of rank at least~$2$. If the topological automorphism group
$\mathrm{Auttop}(\Delta)$ acts chamber transitively or even strongly transitively on
$\Delta$, is $\Delta$ necessarily Moufang?}
\end{question}

Bruhat-Tits buildings of dimension ${\it l}\ge2$ were classified by Bruhat and Tits
in \cite{bruh} and \cite{como}.
Tables describing the thirty-five families of locally finite Bruhat-Tits
buildings can be found
in \cite[Chapter~28]{weiss3}. Apart from the three families involving 
inseparable extensions in characteristic~2 and~3 in \cite[Table 28.4]{weiss3},
they are precisely the affine
buildings associated with absolutely simple algebraic groups of $k$-rank ${\it l}$,
where $k$ is a commutative local field in the sense of Definition~\ref{abc0} below. These
algebraic groups had been classified earlier in \cite{tits-local} (for arbitrary~${\it l}$).

\begin{conventions}\label{abc83}
All topological spaces in this paper are assumed to be Hausdorff, unless stated
otherwise. In a metric space $X$ we denote balls as
\[
 B_r(x)=\{y\in X;\ d(x,y)<r\}\text{ and }
 \bar B_r(x)=\{y\in X;\ d(x,y)\leq r\}.
\]
All buildings are assumed to be thick (as defined in \cite[1.6]{weiss1}).
When we say that $\Delta$ is the ``building at infinity of an affine building $X$'' in
Theorem~\ref{thm1} or 
elsewhere in this paper, we mean that $\Delta$ is the ``building at infinity
of $X$ with respect to the complete system of apartments'' as defined in 
\cite[8.5 and 8.25]{weiss3}.
\end{conventions}

\noindent
After preparing the ground in Sections~\ref{444} through \ref{111},
we prove Theorems~\ref{thm1} and~\ref{thm2} in Section~\ref{999}.
We also call attention to Theorem~\ref{TopHua} below.

\smallskip
\section{Locally compact groups}\label{444}

\noindent
To get started, we assemble a few well known facts about locally compact
groups. Recall that a locally compact space is $\sigma$-compact if it is a
countable union of compact subsets. Every second countable locally compact
space is $\sigma$-compact.

\begin{theorem}\label{OpenAction}
Let $G$ be a locally compact $\sigma$-compact group and let
$X$ be a locally compact space. Suppose that
$G\times X\to X$ is a continuous and transitive action.
Then the action induces a homeomorphism from the quotient space $G/H$ to $X$, where $H$
is the stabilizer of a point $x\in X$.
\end{theorem}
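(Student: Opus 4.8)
The plan is to fix a point $x\in X$, consider the orbit map $\phi\colon G\to X$ given by $\phi(g)=gx$, and show that it is open. Since $\phi$ is continuous, surjective (by transitivity), and factors as $\phi=\bar\phi\circ\pi$, where $\pi\colon G\to G/H$ is the (open) quotient map and $\bar\phi\colon G/H\to X$ is the induced continuous bijection, openness of $\phi$ is exactly equivalent to $\bar\phi$ being a homeomorphism. So the whole statement reduces to proving that $\phi$ is open.

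By left translation the problem reduces to openness of $\phi$ at the identity: it suffices to show that for every neighborhood $U$ of $e$ in $G$ the set $Ux=\phi(U)$ is a neighborhood of $x$ in $X$. Indeed, $\phi(gW)=g\phi(W)$ and left translation by $g$ is a homeomorphism of $X$, so openness at $e$ propagates to all of $G$. Given such a $U$, I would use local compactness of $G$ together with continuity of the group operations to choose a compact symmetric neighborhood $V=V^{-1}$ of $e$ with $V\cdot V\subseteq U$.

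Since $G$ is $\sigma$-compact, it is covered by countably many left translates $g_nV$, whence $X=Gx=\bigcup_n g_nVx$. Each set $g_nVx=\phi(g_nV)$ is the continuous image of a compact set, hence compact and, as $X$ is Hausdorff, closed. Because $X$ is locally compact Hausdorff it is a Baire space, so at least one $g_nVx$ has nonempty interior. To finish, I would transport this interior point back to $x$: translating by the homeomorphism $g_n^{-1}$ of $X$ shows that $Vx$ has an interior point $v_0x$ with $v_0\in V$, and then $v_0^{-1}$ acts as a homeomorphism carrying $v_0x$ to $x$, so $v_0^{-1}(Vx)=(v_0^{-1}V)x$ is a neighborhood of $x$. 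Since $v_0^{-1}\in V^{-1}=V$ we have $v_0^{-1}V\subseteq V\cdot V\subseteq U$, giving $Ux\supseteq(v_0^{-1}V)x$, a neighborhood of $x$. This establishes openness at $e$ and completes the argument.

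I expect the decisive step to be the Baire category argument. Its two hypotheses must both be supplied by the assumptions: $\sigma$-compactness of $G$ is what produces the \emph{countable} cover needed for Baire's theorem, and the Hausdorff assumption is what makes the compact images $g_nVx$ closed (so they are genuine candidates to be somewhere dense) and what guarantees that the locally compact space $X$ is a Baire space. The remaining manipulations, namely extracting the symmetric $V$ with $V\cdot V\subseteq U$ and chasing neighborhoods back to $x$, are routine.
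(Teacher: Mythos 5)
Your proof is correct; the paper itself gives no argument for this theorem but simply cites Stroppel [10.10] and Salzmann et al.\ [96.8], and your Baire category argument is precisely the standard proof found in those references. All the key points are in order: the reduction to openness of the orbit map at the identity, the choice of a compact symmetric $V$ with $VV\subseteq U$, the countable cover $X=\bigcup_n g_nVx$ by closed sets supplied by $\sigma$-compactness, and the use of the Baire property of the locally compact Hausdorff space $X$ to find an interior point that is then translated back to $x$.
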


\begin{proof}
See \cite[10.10]{stroppel} or \cite[96.8]{blauesbuch}.
\end{proof}

\noindent
From \ref{OpenAction}, we obtain the ``Open Mapping Theorem'':

\begin{corollary}\label{open}
Let $f:G\to H$ be a continuous epimorphism of locally compact groups.
If $G$ is $\sigma$-compact, then $f$ is an open map. In particular,
$f$ is a topological isomorphism if $f$ is continuous and bijective.
\end{corollary}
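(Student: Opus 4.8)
The plan is to deduce Corollary~\ref{open} directly from Theorem~\ref{OpenAction}. The statement I must prove is that a continuous epimorphism $f:G\to H$ between locally compact groups is an open map, provided $G$ is $\sigma$-compact, and that consequently $f$ is a topological isomorphism whenever it is continuous and bijective.

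First I would manufacture a transitive continuous action to which Theorem~\ref{OpenAction} applies. The natural choice is the action of $G$ on $H$ by left translation through $f$, namely $G\times H\to H$, $(g,h)\mapsto f(g)h$. This is continuous because $f$ is continuous and multiplication in $H$ is continuous, and it is transitive precisely because $f$ is surjective: any $h\in H$ is reached from the identity $1_H$ as $f(g)\cdot 1_H$ for some $g$ with $f(g)=h$. Here $G$ is locally compact and $\sigma$-compact by hypothesis, and $H$ is locally compact, so all the hypotheses of Theorem~\ref{OpenAction} are met. The stabilizer of the point $1_H\in H$ under this action is exactly $\ker f$, since $f(g)\cdot 1_H=1_H$ iff $g\in\ker f$.

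Theorem~\ref{OpenAction} then yields that the induced map $G/\ker f\to H$ is a homeomorphism, where $G/\ker f$ carries the quotient topology. Now I would chase the openness of $f$ through this homeomorphism. The quotient projection $\pi:G\to G/\ker f$ is open (quotient maps by subgroups in topological groups are always open), and $f$ factors as the composite of $\pi$ followed by the homeomorphism $G/\ker f\to H$. A composite of an open map with a homeomorphism is open, so $f$ is open. For the final assertion: if $f$ is in addition bijective, then it is a continuous open bijection, hence a homeomorphism, and being a continuous group isomorphism that is a homeomorphism it is by definition a topological isomorphism.

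The only genuine subtlety is verifying the hypotheses of Theorem~\ref{OpenAction} cleanly rather than the deduction itself, which is formal. In particular one should note that local compactness of $H$ is part of the hypothesis, so it need not be derived, and that $G$ being $\sigma$-compact is exactly what Theorem~\ref{OpenAction} requires of the acting group. I do not anticipate a real obstacle here; the content is entirely packaged inside Theorem~\ref{OpenAction}, and the role of this corollary is simply to repackage that homeomorphism statement as the familiar open mapping theorem for group homomorphisms.
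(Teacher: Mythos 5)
Your proposal is correct and follows exactly the same route as the paper: the paper also lets $G$ act on $H$ by left multiplication via $f$, applies Theorem~\ref{OpenAction} to get a homeomorphism $G/\ker(f)\to H$, and combines this with the openness of the quotient projection $G\to G/\ker(f)$. You have simply written out in more detail the verifications (continuity, transitivity, identification of the stabilizer) that the paper leaves implicit.
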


\begin{proof}
The map $G\to G/\mathrm{ker}(f)$ is open and 
$G$ acts via $f$ on $H$ by left multiplication. By
\ref{OpenAction} this action induces a homeomorphism $G/\mathrm{ker}(f)\to H$.
\end{proof}

\noindent
We could not find a reference for the following standard fact.

\begin{theorem}\label{IsometriesOfProperSpace}
Let $(X,d)$ be a proper metric space, i.e., all closed balls in $X$ are compact.
Then the isometry group $\mathrm{Iso}(X)$, endowed with the compact-open topology,
is a locally compact second countable transformation group.
The stabilizer $H\subseteq \mathrm{Iso}(X)$ of any point $x\in X$ is compact.
If $H/H_n$ is finite for all $n\in \mathbb N$, where $H_n$ is the pointwise
stabilizer of the closed ball $\bar B_n(x)$, then $H$ is totally disconnected.
\end{theorem}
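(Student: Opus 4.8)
The plan is to prove each assertion of Theorem~\ref{IsometriesOfProperSpace} in turn, with the topology on $\mathrm{Iso}(X)$ throughout being the compact-open topology. The key structural feature to exploit is that in a proper metric space the compact-open topology coincides with the topology of uniform convergence on compact sets, and also with the topology of pointwise convergence on a countable dense subset. Properness gives us the Arzel\`a--Ascoli theorem as our main tool, since isometries are automatically equicontinuous (indeed $1$-Lipschitz).

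First I would establish local compactness. Fix a base point $x_0\in X$ and a countable dense subset $D\subseteq X$ (which exists because a proper metric space is separable: cover $X$ by countably many balls $\bar B_n(x_0)$, each compact hence separable). I would show that the map $\mathrm{Iso}(X)\to X^D$ sending $g$ to $(g(x))_{x\in D}$ is a topological embedding with closed image, where $X^D$ carries the product topology. Injectivity holds because isometries agreeing on a dense set agree everywhere. Continuity in both directions follows from the equivalence of the compact-open and pointwise-convergence topologies for equicontinuous families. For local compactness, I would take a basic neighborhood of $g$ consisting of isometries $h$ with $h(x_0)\in \bar B_1(g(x_0))$; by Arzel\`a--Ascoli, the family of $1$-Lipschitz maps sending $x_0$ into the compact set $\bar B_1(g(x_0))$ and mapping each $\bar B_n(x_0)$ into the compact set $\bar B_{n+1}(g(x_0))$ is relatively compact in the compact-open topology, and being an intersection of closed conditions the isometries in this family form a compact neighborhood. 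For second countability, the embedding into the separable metrizable space $X^D$ suffices, since subspaces of second countable spaces are second countable.

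Next I would verify that $\mathrm{Iso}(X)$ is a topological transformation group, i.e., that the evaluation map $\mathrm{Iso}(X)\times X\to X$ and the group operations are continuous. Joint continuity of evaluation again uses equicontinuity: if $g_i\to g$ compact-open and $y_i\to y$, then $d(g_i(y_i),g(y))\le d(g_i(y_i),g_i(y))+d(g_i(y),g(y))=d(y_i,y)+d(g_i(y),g(y))\to 0$. Continuity of composition and inversion is a standard consequence of joint continuity of evaluation together with properness. The stabilizer $H$ of $x$ is compact because it is exactly the family of isometries fixing $x$, which map each $\bar B_n(x)$ to itself; by Arzel\`a--Ascoli this family is relatively compact, and it is closed (being defined by the closed conditions $h(x)=x$ and $d(h(y),h(z))=d(y,z)$), hence compact.

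Finally I would prove total disconnectedness under the stated hypothesis. The subgroups $H_n$ are the pointwise stabilizers of $\bar B_n(x)$; each $H_n$ is closed, and $H_n$ is open in $H$ precisely because agreement on $\bar B_n(x)$ is a compact-open neighborhood condition and $H$ is equicontinuous. The family $\{H_n\}$ is a decreasing chain of open (hence clopen) subgroups whose intersection is trivial, since an isometry fixing every $\bar B_n(x)$ fixes all of $X$. The hypothesis that each $H/H_n$ is finite then makes every $H_n$ of finite index, hence each coset of $H_n$ is clopen; I would conclude that the connected component of the identity is contained in $\bigcap_n H_n=\{1\}$, so $H$ is totally disconnected. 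I expect the main obstacle to be the careful bookkeeping in the Arzel\`a--Ascoli argument for local compactness---specifically, pinning down a compact \emph{neighborhood} rather than just a relatively compact set, which requires checking that the limiting maps of a convergent net of isometries are themselves isometries (true because the defining identity $d(h(y),h(z))=d(y,z)$ is preserved under pointwise limits) and that surjectivity survives in the limit (which follows from the fact that a surjective isometry of a proper space has a limit that is again surjective, using that the inverse maps are also equicontinuous and lie in the same compact family).
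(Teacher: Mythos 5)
Your proof is correct and follows essentially the same route as the paper: equicontinuity of isometries plus Arzel\`a--Ascoli for local compactness and compactness of the stabilizer, and the finite quotients $H/H_n$ for total disconnectedness (your clopen-subgroup argument is equivalent to the paper's injection of $H$ into $\prod_{n}H/H_n$). One caveat: your stated reason that $H_n$ is open in $H$ --- that agreement on $\bar B_n(x)$ is a compact-open neighborhood condition --- is false as a general principle (pointwise stabilizers of compact sets are \emph{closed}, not open, in the compact-open topology; consider $SO(2)$ acting on $\mathbb{R}^2$, where the pointwise stabilizer of $\bar B_1(0)$ is trivial); openness of $H_n$ instead follows from its being a closed subgroup of finite index in the compact group $H$, which the hypothesis hands you and which you do invoke in the very next clause.
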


\begin{proof}
Since $X$ is a countable union of second-countable open sets
\cite[XI.4.1]{dug}, $X$ is second countable. 
Therefore the space $C(X,X)$ of all continuous maps from $X$ to $X$
is second countable in the compact-open topology; see \cite[XII.5.2]{dug}.
Thus we may use sequences to check continuity. By \cite[XII.7.2]{dug}, the
compact-open topology coincides with the topology of uniform convergence
on compact sets. We note also that $\mathrm{Iso}(X)$ is closed in $C(X,X)$.

We show first that $\mathrm{Iso}(X)$ is a topological group.
It is true in general that $C(X,X)$ is a topological semigroup (by \cite[XII.2.2]{dug}),
so we only have to check the continuity of inversion.
Suppose that $(g_n)_{n\in{\mathbb N}}$ converges in $\mathrm{Iso}(X)$ to $g$.
We have $d(g_n(z),g(z))=d(z,g_n^{-1}g(z))$ for each $n$. Substituting $y=g(z)$, this
becomes $d(g_n(z),g(z))=d(g^{-1}(y),g_n^{-1}(y))$.
Thus if $Y\subseteq X$ is compact, then $(g_n^{-1})_{n\in{\mathbb N}}$ converges on $Y$ 
uniformly to $g^{-1}$
because $(g_n)_{n\in{\mathbb N}}$ converges on $g^{-1}(Y)$ uniformly to $g$.

Now we show that some neighborhood of the identity of $\mathrm{Iso}(X)$ is compact.
Let $x\in X$ be any point and put $L=\{g\in \mathrm{Iso}(X);\ d(x,g(x))<1\}$. This is an
open neighborhood of the identity in $\mathrm{Iso}(X)$ because the evaluation map
$g\mapsto g(x)$ is continuous (by \cite[XII2.4]{dug}).
Since $L$ consists of isometries, $L$ is equicontinuous.
Let $y\in X$ be arbitrary and put $r=d(x,y)$. Then
$L(y)$ is contained in the compact ball $\bar B_{r+1}(x)$. Therefore $L(y)$ has
a compact closure. By Arzela-Ascoli (see \cite[XII.6.4]{dug}), $L$ has a compact closure
$\bar L$ in $\mathrm{Iso}(X)$.

Since $H\subseteq \bar L$ is closed, $H$ is compact.
If $H/H_n$ is finite for all $n$, then $H$ injects into the compact totally
disconnected space
$$\prod_{n\in\mathbb N}H/H_n$$ 
and is therefore totally disconnected.
\end{proof}

\smallskip
\section{Locally compact fields}

\noindent
In this section we assemble classification results for locally compact 
nondiscrete fields, skew fields
and octonion division algebras. 

\begin{proposition}\label{locfieldclass}
Let $F$ be an infinite nondiscrete locally compact field. Then the following hold.
\begin{thmenum}
\item If $\mathrm{char}(F)>0$, then $F$ is isomorphic to
the field of formal Laurent series
${\mathbb F}_q\powfield{t}$ for some $q$, where ${\mathbb F}_q$ is the field of $q$ elements.
\item If $\mathrm{char}(F)=0$, then either $F={\mathbb R}$ or ${\mathbb C}$
or $F$ is a finite extension of the $p$-adic field ${\mathbb Q}_p$ for some prime $p$.
\end{thmenum}
\end{proposition}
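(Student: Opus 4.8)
The plan is to prove the classification of infinite nondiscrete locally compact fields—this is the classical theorem of Pontryagin (for characteristic zero) extended to positive characteristic, and the standard approach uses Haar measure to produce an "absolute value" on $F$ and then invokes the structure theory of complete valued fields.
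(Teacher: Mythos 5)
Your outline—use Haar measure on $(F,+)$ to define the module $\mathrm{mod}_F(x)=\mu(xE)/\mu(E)$, show this yields an absolute value, and then classify via the structure theory of complete (discretely) valued fields with finite residue field—is precisely the argument of the reference the paper cites for this statement (Weil, \emph{Basic Number Theory}, Chapter~1, Theorems~5 and~8), so you are taking essentially the same route; the paper itself gives no details beyond that citation. Note only that what you have written is a plan rather than a proof, so in the end you would either need to execute these steps or, as the authors do, simply cite the standard reference.
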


\begin{proof}
This holds by \cite[Chapter~1, Theorems~5 and~8]{weil}.
\end{proof}

\begin{proposition}\label{abc101}
If $F$ is an infinite nondiscrete locally compact field of characteristic $p>0$,
then $F/F^p$ is an extension of degree $p$.
\end{proposition}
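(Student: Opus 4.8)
The plan is to reduce immediately to the explicit model furnished by Proposition~\ref{locfieldclass}. Since $\mathrm{char}(F)=p>0$, part~(i) of that proposition gives an isomorphism $F\cong\mathbb{F}_q\powfield{t}$, where $q=p^f$ for some $f\ge1$. The degree $[F:F^p]$ is unchanged under a field isomorphism, so it suffices to compute it for the concrete field of formal Laurent series $\mathbb{F}_q\powfield{t}$.

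Next I would identify the subfield $F^p$ explicitly. Because $\mathbb{F}_q$ is finite, it is perfect, so the Frobenius $a\mapsto a^p$ is a bijection of $\mathbb{F}_q$ and hence $\mathbb{F}_q^p=\mathbb{F}_q$. Raising to the $p$-th power is a ring endomorphism of $\mathbb{F}_q\powfield{t}$ (this is where $\mathrm{char}(F)=p$ enters, via the additivity of the Frobenius), so for $x=\sum_i a_i t^i$ one has $x^p=\sum_i a_i^p\,t^{ip}$. Letting the $a_i$ range over $\mathbb{F}_q$ and invoking $\mathbb{F}_q^p=\mathbb{F}_q$, this shows that $F^p$ consists of exactly those Laurent series in which only exponents divisible by $p$ occur; that is, $F^p=\mathbb{F}_q\powfield{t^p}$.

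Finally I would exhibit an explicit basis. Grouping the terms of an arbitrary series $\sum_i c_i t^i$ according to the residue of $i$ modulo $p$ yields a decomposition $F=\bigoplus_{j=0}^{p-1}t^j F^p$, and this decomposition is clearly unique, so $\{1,t,\dots,t^{p-1}\}$ is a basis of $F$ over $F^p$. Therefore $[F:F^p]=p$, as asserted. The whole argument is bookkeeping once the classification is available: its only substantive inputs are Proposition~\ref{locfieldclass}(i) and the perfectness of the residue field $\mathbb{F}_q$, so I anticipate no real obstacle beyond carrying out these elementary computations carefully.
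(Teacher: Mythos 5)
Your proof is correct and follows the same route as the paper: both reduce via Proposition~\ref{locfieldclass}(i) to $F=\mathbb{F}_q\powfield{t}$ and identify $F^p=\mathbb{F}_q\powfield{t^p}$, from which $[F:F^p]=p$ follows. You simply spell out the details (perfectness of $\mathbb{F}_q$ and the basis $\{1,t,\dots,t^{p-1}\}$) that the paper leaves implicit.
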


\begin{proof}
If $F={\mathbb F}_q\powfield{t}$ for some power $q$ of $p$, then
$F^p={\mathbb F}_q\powfield{t^p}$. The claim holds, therefore, by 
\ref{locfieldclass}(i).
\end{proof}

\begin{proposition}\label{localskewfieldclass}
Let $F$ be an noncommutative nondiscrete locally compact skew field. Then 
$F$ is a cyclic algebra over a locally compact field, i.e., over one of the fields in 
Proposition~{\rm\ref{locfieldclass}} other than ${\mathbb C}$.
\end{proposition}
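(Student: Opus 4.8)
The plan is to reduce the proposition to two classical inputs about locally compact division rings and then read off cyclicity case by case. First I would analyze the center $Z=Z(F)$. Being the intersection of the centralizers of the elements of $F$, it is closed and hence locally compact, and the structure theory of non-discrete locally compact division rings (see \cite[Chapter~1]{weil}) shows that $Z$ is a non-discrete locally compact field over which $F$ is finite-dimensional. Thus $Z$ is one of the fields classified in Proposition~\ref{locfieldclass}. I would then rule out $Z=\mathbb C$: since $\mathbb C$ is algebraically closed it admits no non-trivial finite-dimensional central division algebra, so $Z=\mathbb C$ would force $F=\mathbb C$, contradicting the non-commutativity of $F$. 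This already explains the phrase ``other than $\mathbb C$'' in the statement.

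It then remains to verify that $F$ is cyclic over each of the surviving centers. If $Z=\mathbb R$, then Frobenius' theorem leaves $F=\mathbb H$ as the only non-commutative option, and $\mathbb H$ is the cyclic algebra attached to the quadratic Galois extension $\mathbb C/\mathbb R$ together with the element $-1$, hence cyclic. If $Z$ is non-archimedean, that is $Z=\mathbb F_q\powfield{t}$ or a finite extension of $\mathbb Q_p$, then $Z$ is a local field and I would invoke the classification of central simple algebras over local fields: the Brauer group is canonically isomorphic to $\mathbb Q/\mathbb Z$, and the division algebra of invariant $s/n$ in lowest terms is cyclic, being split by the unramified extension of degree $n$. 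In particular every central division algebra over $Z$ is cyclic, which in particular applies to $F$.

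The genuinely substantial ingredients are the finite-dimensionality of $F$ over its center and the cyclicity of division algebras over non-archimedean local fields; everything else is bookkeeping. I expect the main obstacle to be the first of these, namely the reduction of an a priori infinite-dimensional locally compact skew field to a finite-dimensional algebra over a local field together with a clean citation for it; once that reduction is in hand, Proposition~\ref{locfieldclass} pins down the possible centers and local class field theory delivers cyclicity.
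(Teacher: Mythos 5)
Your argument is correct and is essentially the standard proof; the paper itself gives no argument here but simply cites \cite[58.11]{salzmann}, which proceeds along the same lines. The one substantive input you rightly flag---that a noncommutative nondiscrete locally compact skew field has nondiscrete (hence locally compact, nondiscrete) center and is finite-dimensional over it---is indeed available in the literature (e.g.\ \cite[Ch.~I]{weil} or the results preceding \cite[58.11]{salzmann}), after which Frobenius' theorem in the archimedean case and local class field theory in the nonarchimedean case deliver cyclicity exactly as you describe.
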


\begin{proof}
See \cite[58.11]{salzmann}.
\end{proof}

\begin{proposition}\label{abc65}
Let $F$ be a nondiscrete locally compact field or skew field. Then 
the locally compact topology on $F$ is unique except when 
$F={\mathbb C}$, in which case the locally compact topology 
is unique up to field automorphisms.
\end{proposition}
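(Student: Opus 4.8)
The plan is to reduce the whole statement to a rigidity assertion about abstract isomorphisms and then read off both conclusions from the classification in Propositions~\ref{locfieldclass} and~\ref{localskewfieldclass}. Suppose $\tau_1,\tau_2$ are two nondiscrete locally compact topologies making $F$ a topological (skew) field. By the cited classification each $(F,\tau_i)$ is \emph{topologically} isomorphic to one of the standard models $M_i$ occurring there; write $f_i\colon F\to M_i$ for the corresponding abstract (skew) field isomorphisms, which are homeomorphisms onto the standard topology $\mathrm{top}_{M_i}$. For a bijection $f$ put $f^{*}\mathcal T=\{f^{-1}(U):U\in\mathcal T\}$, so that $\tau_i=f_i^{*}\mathrm{top}_{M_i}$. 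Since the underlying abstract (skew) field is the same, $g:=f_2\circ f_1^{-1}\colon M_1\to M_2$ is an abstract isomorphism of standard models, and $f_2=g\circ f_1$ gives $\tau_2=f_1^{*}(g^{*}\mathrm{top}_{M_2})$. Thus, once $g$ is known to be a homeomorphism we have $g^{*}\mathrm{top}_{M_2}=\mathrm{top}_{M_1}$ and hence $\tau_2=f_1^{*}\mathrm{top}_{M_1}=\tau_1$. So the entire statement comes down to the rigidity claim that every abstract isomorphism between standard models is continuous, with the sole exception $M_1=M_2=\mathbb C$.

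First I would treat $\mathbb C$, which accounts for the exception. The field $\mathbb C$ has $2^{2^{\aleph_0}}$ abstract automorphisms, obtained by permuting a transcendence basis of $\mathbb C/\mathbb Q$ and extending, and only the identity and complex conjugation are continuous. If $g$ is a discontinuous automorphism, then $\tau_2=f_1^{*}(g^{*}\mathrm{top}_{\mathbb C})\neq\tau_1$, so the topology is genuinely non-unique; but the same identity shows that $\tau_1$ and $\tau_2$ always differ by the field automorphism $g$, which is exactly the assertion ``unique up to field automorphisms''.

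For the remaining models I must prove rigidity. In the archimedean case $M=\mathbb R$ this is immediate: a field homomorphism preserves squares, hence preserves the order defined by $x\ge 0\iff x$ is a square, hence is monotone and therefore continuous; in fact $\mathrm{Aut}(\mathbb R)=\{1\}$. For the nonarchimedean models, i.e.\ the finite extensions of $\mathbb Q_p$ and the Laurent series fields $\mathbb F_q\powfield{t}$, I would show that the valuation ring $\mathcal O$ is characterized purely algebraically inside $M$, so that any abstract isomorphism carries $\mathcal O_{M_1}$ onto $\mathcal O_{M_2}$ and the maximal ideals onto one another; since the powers of the maximal ideal form a neighborhood basis of $0$, this forces both $g$ and $g^{-1}$ to be continuous. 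As a model computation, for $M=\mathbb Q_p$ with $p$ odd one has $x\in\mathbb Z_p$ if and only if $1+px^2$ is a square in $\mathbb Q_p$: Hensel's lemma makes it a square when $v(x)\ge 0$, while $v(1+px^2)=1+2v(x)$ is negative and odd when $v(x)<0$, so it is not a square. Finally, the skew field case reduces to the commutative one: the center $Z(F)$ is closed, hence a nondiscrete locally compact field, and by Proposition~\ref{localskewfieldclass} it is one of the above fields other than $\mathbb C$; its subspace topology is therefore unique, and the topology on the finite-dimensional division algebra $F$ is the unique Hausdorff vector space topology over $Z(F)$, so it too is determined.

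The main obstacle is the uniform algebraic characterization of the valuation ring in the nonarchimedean cases. The clean squares argument above must be adapted to residue characteristic $2$ and to arbitrary finite extensions of $\mathbb Q_p$, and --- most delicately --- to the equal characteristic fields $\mathbb F_q\powfield{t}$, where the $1+px^2$ trick is unavailable and one must instead invoke a definability-of-the-valuation argument exploiting the finite residue field. Everything else is bookkeeping around the classification together with the elementary observation that a homeomorphism pulls the standard topology back to the standard topology.
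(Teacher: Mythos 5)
Your outline is sound, but it is a genuinely different route from the paper's: the paper does not prove this proposition at all, it simply cites the references collected at the bottom of p.\,332 of \emph{The classical fields}. Your reduction --- classify $(F,\tau_i)$ up to topological isomorphism via Propositions~\ref{locfieldclass} and~\ref{localskewfieldclass}, then show that the induced abstract isomorphism $g$ between standard models is automatically continuous except when both models are $\mathbb C$ --- is the standard self-contained argument, and your treatment of $\mathbb R$ (order is defined by squares), of $\mathbb C$ (discontinuous automorphisms transport the topology to a genuinely different one, but always one in the $\mathrm{Aut}(\mathbb C)$-orbit), and of skew fields (reduce to the closed, nondiscrete center and use uniqueness of the Hausdorff vector space topology in finite dimension) are all correct. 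The one place where you are making life harder than necessary is the step you yourself flag as the main obstacle: an explicit algebraic definition of the valuation ring in every nonarchimedean model, including residue characteristic $2$ and the equal-characteristic fields $\mathbb F_q\powfield{t}$. This can be done (Julia Robinson's $1+px^2$ trick in the form you give, Artin--Schreier equations $y^p-y=x$ in equal characteristic), but it is cleaner to invoke the uniqueness of the complete discrete valuation with finite residue field on a local field --- exactly the fact recorded in Proposition~\ref{abc0a} of this paper: an abstract isomorphism $g\colon M_1\to M_2$ of local fields pulls $v_2$ back to a complete discrete valuation on $M_1$ with finite residue field, which must therefore coincide with $v_1$ up to normalization, so $g$ is a homeomorphism with no case analysis. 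You should also record the easy bookkeeping that no standard model is abstractly isomorphic to a standard model of a different archimedean/nonarchimedean type ($\mathbb R$ is formally real, $\mathbb C$ is algebraically closed, the rest carry their valuation intrinsically), since your rigidity claim quantifies over all pairs $(M_1,M_2)$ and not just automorphisms of a single model.
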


\begin{proof}
See the references at the bottom of \cite[p.\,332]{salzmann}.
\end{proof}

\begin{definition}\label{abc0}
We will say that $F$ is a {\it local field} if it is a field or a skew field
complete with respect to a discrete valuation whose residue field is finite.
\end{definition}

\begin{proposition}\label{abc0a}
Local fields, as defined in Definition~{\rm\ref{abc0}}, are precisely the fields and the skew
fields that appear in Propositions~{\rm\ref{locfieldclass}} and~{\rm\ref{localskewfieldclass}}
other than those whose center is ${\mathbb R}$ or ${\mathbb C}$.
Furthermore, the discrete valuation of a local field is unique.
\end{proposition}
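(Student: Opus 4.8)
The plan is to prove the first, set-theoretic assertion by two inclusions, and then to deduce the uniqueness of the discrete valuation from the uniqueness of the locally compact topology established in \ref{abc65}.

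First I would check that each field or skew field occurring in \ref{locfieldclass} and \ref{localskewfieldclass}, other than those with center $\mathbb{R}$ or $\mathbb{C}$, is a local field in the sense of \ref{abc0}. For $\mathbb{F}_q\powfield{t}$ the $t$-adic valuation is discrete, the field is complete, and the residue field is the finite field $\mathbb{F}_q$; for a finite extension of $\mathbb{Q}_p$ the $p$-adic valuation extends uniquely, the extension remains complete, and its residue field is a finite extension of $\mathbb{F}_p$ and hence finite. After deleting $\mathbb{C}$ (already excluded in \ref{localskewfieldclass}) and the center-$\mathbb{R}$ and center-$\mathbb{C}$ skew fields, every remaining skew field in \ref{localskewfieldclass} is a finite-dimensional division algebra $D$ over one of these non-archimedean local fields $k$. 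The valuation of $k$ extends uniquely to $D$, the algebra $D$ is complete because it is finite-dimensional over the complete field $k$, and its residue division algebra is finite-dimensional over the finite residue field of $k$, hence finite, hence a field by Wedderburn's theorem on finite division rings. Thus $D$ too is a local field.

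For the reverse inclusion, let $F$ be any local field, with discrete valuation $v$, valuation ring $\mathcal{O}$ and finite residue field. Then $\mathcal{O}$ is compact, being the inverse limit of its finite quotients by the powers of its maximal ideal, so $F$ is locally compact; and since $v$ is nontrivial, a uniformizer generates a sequence tending to $0$, so $F$ is infinite and nondiscrete. Hence $F$ appears in \ref{locfieldclass} or \ref{localskewfieldclass}. The valuation topology on $F$ is non-archimedean and therefore totally disconnected, whereas $\mathbb{R}$, $\mathbb{C}$ and the skew fields with center $\mathbb{R}$ or $\mathbb{C}$ carry connected locally compact topologies; by the uniqueness statement \ref{abc65} these two topologies would have to agree, which is impossible. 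Therefore $F$ is none of the excluded fields, and the asserted equality follows.

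Finally, for the uniqueness of the discrete valuation I would characterize the valuation ring purely topologically as the set of power-bounded elements, $\mathcal{O}=\{x\in F:\ (x^n)_{n\ge1}\text{ is bounded}\}$: if $v(x)\ge0$ then every $x^n$ lies in the compact set $\mathcal{O}$, while if $v(x)<0$ then $v(x^n)=nv(x)\to-\infty$ and $(x^n)_{n\ge1}$ leaves every compact set. Since the locally compact topology on $F$ is unique (we have just seen that $F\neq\mathbb{C}$), the ring $\mathcal{O}$ is uniquely determined by $F$, and a normalized discrete valuation is recovered from its valuation ring; hence $v$ is unique. The main obstacle I anticipate is the skew-field bookkeeping in the first inclusion—verifying that the valuation extends to $D$ and that the residue algebra is finite, so that Wedderburn applies—together with making the connectedness argument against $\mathbb{R}$ and $\mathbb{C}$ precise via \ref{abc65}.
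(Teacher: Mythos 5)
Your proposal is correct, but it is organized quite differently from the paper's proof, which is essentially a string of citations: completeness plus finite residue field implies local compactness is quoted from Warner, the converse classification from Serre and Wadsworth, and the uniqueness of the valuation from Salzmann et al.\ or Weiss. You instead prove everything directly, and the most interesting divergence is that you make Proposition~\ref{abc65} (uniqueness of the locally compact topology) do double duty: once to exclude ${\mathbb R}$, ${\mathbb C}$ and the skew fields with archimedean center (their unique locally compact topology is connected, while a discrete valuation topology is totally disconnected, and a field automorphism of ${\mathbb C}$ cannot change connectedness), and once more to get uniqueness of the valuation by characterizing the valuation ring topologically as the set of power-bounded elements. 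The paper's route buys brevity and offloads the skew-field bookkeeping (unique extension of the valuation to a finite-dimensional division algebra, finiteness of the residue division ring) to the references; yours buys a self-contained argument whose only external inputs are Propositions~\ref{locfieldclass}, \ref{localskewfieldclass} and~\ref{abc65}, and it makes visible why the archimedean cases must be excluded. One small remark: in the forward inclusion Wedderburn's theorem is not actually needed, since Definition~\ref{abc0} only requires the residue ring to be finite, which you already establish by counting dimensions over the finite residue field of the center; but including it does no harm.
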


\begin{proof}
By \cite[Theorem~21.6]{warner}, a local field is a locally compact 
field with respect to the topology induced by its valuation. By
\cite[Section~II, Corollary~2 to Proposition~3]{serre} and \cite[Corollary~2.2]{wadsworth},
the fields and the skew fields that appear in 
Propositions~\ref{locfieldclass} and~\ref{localskewfieldclass} 
(other than those with center ${\mathbb R}$ or ${\mathbb C}$) are, 
conversely, local fields. The uniqueness of the discrete valuation of a local
field holds by \cite[56.13]{salzmann} or \cite[23.15]{weiss3}.
\end{proof}

\begin{remark}\label{abc0b}
Let $F$ be a local skew field.
Since $F$ and $F^{\mathrm{opp}}$ have the same valuation, it follows by Propositions~\ref{abc65}
and~\ref{abc0a} that they have the same topology as locally compact fields.
\end{remark}

\begin{proposition}\label{localaltalgebras}
Let $F$ be a nonassociative alternative division algebra. Then
$F$ is an octonion division algebra over some 
infinite field $k$. If $F$ is
locally compact and nondiscrete, then $F$ is topologically isomorphic
to the real octonion division algebra ${\mathbb O}$. 
In particular, the locally compact topology on $F$ is unique
and $F$ is connected.
\end{proposition}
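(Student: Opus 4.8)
The plan is to treat the algebraic and topological assertions separately. For the algebraic claim I would appeal to the classical structure theorem for alternative division algebras of Bruck--Kleinfeld and Skornyakov: every alternative division algebra that fails to be associative is an octonion (Cayley--Dickson) division algebra of dimension $8$ over its center $k$, and $k$ is a field. That $k$ must be infinite follows because no octonion division algebra exists over a finite field: a finite field admits no anisotropic quadratic form of dimension greater than $2$, so the $8$-dimensional norm form is isotropic and the algebra splits. This establishes the first sentence.

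For the topological part, assume now that $F$ is locally compact and nondiscrete. First I would check that the center $k$ is again an infinite nondiscrete locally compact field. It is closed in $F$, being the intersection over $a\in F$ of the closed centralizers $\{x\in F: ax=xa\}$, and hence locally compact; and it is nondiscrete because the octonion norm maps $F$ into $k$ compatibly with the module function of the locally compact ring $F$, so the nontriviality forced by nondiscreteness of $F$ descends to $k$. With $k$ infinite, nondiscrete and locally compact, Proposition~\ref{locfieldclass} applies and tells me that $k$ is one of $\mathbb{F}_q\powfield{t}$, $\mathbb{R}$, $\mathbb{C}$, or a finite extension of $\mathbb{Q}_p$.

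The core of the proof is then to rule out every candidate except $\mathbb{R}$. The norm form of an octonion algebra is a $3$-fold Pfister form, a quadratic form of dimension $8$, and the algebra is a division algebra exactly when this form is anisotropic. Over $\mathbb{C}$ every quadratic form of dimension at least $2$ is isotropic, and over every non-archimedean local field the $u$-invariant equals $4$, so that every quadratic form of dimension at least $5$ --- in particular every octonion norm form --- is isotropic. In all of these cases the octonion algebra is therefore split and cannot be a division algebra, so we are left with $k=\mathbb{R}$. Over $\mathbb{R}$ the unique octonion division algebra is the classical real octonion algebra $\mathbb{O}$, whose norm form is positive definite; since by Proposition~\ref{abc65} the locally compact topology on $\mathbb{R}$ is unique, the $8$-dimensional $\mathbb{R}$-algebra $F$ carries the Euclidean topology. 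Hence $F$ is topologically isomorphic to $\mathbb{O}$, its locally compact topology is unique, and $F$ is connected because it is homeomorphic to $\mathbb{R}^8$.

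The step I expect to require the most care is the elimination of the equal-characteristic-$2$ case $\mathbb{F}_q\powfield{t}$ with $q$ even, where the theory of quadratic forms diverges from the classical ($\mathrm{char}\neq2$) picture, so the Pfister-form and $u$-invariant arguments must be phrased for quadratic rather than symmetric bilinear forms. A secondary point deserving a careful word is the descent of nondiscreteness from $F$ to its center $k$, since a priori the algebra topology on $F$ need not be the $k$-product topology.
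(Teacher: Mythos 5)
Your argument is correct and follows the same overall strategy as the paper's proof: quote Bruck--Kleinfeld for the first assertion, pass to the center $k$, observe that it is closed (hence locally compact) and nondiscrete, invoke Proposition~\ref{locfieldclass} to list the candidates, eliminate all but $\mathbb R$, and then combine the uniqueness of the real octonion division algebra with the rigidity of finite-dimensional topological vector spaces over $\mathbb R$ to upgrade the algebraic isomorphism $F\cong\mathbb O$ to a topological one. Where you genuinely differ is in how the eliminations are carried out: the paper cites \cite[28.4(i)]{weiss3} for the nonexistence of octonion division algebras over fields with finite residue field and \cite[20.9]{TW} for the case $k=\mathbb C$, whereas you replace both by the quadratic-form argument that the $8$-dimensional norm form is isotropic as soon as the $u$-invariant is at most $4$ (and you correctly flag the characteristic-$2$ care needed for $\mathbb F_q\powfield{t}$ with $q$ even, where one must work with nonsingular quadratic rather than bilinear forms). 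This buys a self-contained and arguably more transparent elimination at the price of importing the computation of $u$-invariants of local fields; the same isotropy argument also gives your proof that $k$ is infinite, where the paper cites \cite[9.9(v)]{TW}. The one step you should firm up is the nondiscreteness of $k$: your norm/module sketch tacitly relates the module of $a\in k$ acting on $F$ to its module acting on $k$, which is close to what needs proving, since a priori the topology on $F$ need not be the $k$-product topology (indeed a nondiscrete locally compact vector space over a discrete field can be finite-dimensional). The paper disposes of exactly this point by citing \cite[Corollary~1 on p.\,465]{warner1}, and you should either do the same or write out the module computation in full.
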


\begin{proof}
The first assertion was first proved in \cite{bruck} and \cite{kleinfeld};
see \cite[20.2-20.3]{TW} for another proof. Therefore $k$ is infinite
by, for example, \cite[9.9(v)]{TW}. Now suppose that $F$ is
locally compact and nondiscrete. 
Since the center $k$ of $F$ is closed, it is locally compact.
By \cite[Corollary~1 on p.\,465]{warner1}, $k$ is nondiscrete. Hence by \cite[28.4(i)]{weiss3} and 
\ref{locfieldclass}, $k$ must be ${\mathbb R}$ or ${\mathbb C}$.
By \cite[20.9]{TW}, $k$ cannot be ${\mathbb C}$. Thus $k={\mathbb R}$.
By \cite[9.4 and 9.7]{TW}, there is precisely one quaternion division
algebra ${\mathbb H}$ over ${\mathbb R}$. Let $N$
denote its reduced norm. Since $N({\mathbb H})=\{\alpha\in{\mathbb R};\ \alpha\ge0\}$, 
it follows from \cite[9.9(v)]{TW} that there is precisely
one octonion division algebra ${\mathbb O}$ over ${\mathbb R}$. (The
uniqueness of ${\mathbb H}$ and ${\mathbb O}$ was, in fact, first proved by
Frobenius.) Since
${\mathbb O}$ is finite-dimensional over its center,
an isomorphism from ${\mathbb O}$ to $F$
must, in fact, be a topological isomorphism (by \cite[58.6(i)]{salzmann}).
\end{proof}

\smallskip
\section{The rank~1 case: compact projective lines}

\noindent
Let $F$ be field, a skew field or an alternative division algebra.
Let $P=F\cup\{\infty\}$ and let $G$ denote the permutation group
on $P$ generated by the maps
$\tau_a:x\mapsto a+x$ for all $a\in F$
and the involution $i:x\mapsto -x^{-1}$ 
with the usual conventions that
$0^{-1}=\infty$, $\infty^{-1}=0$, $-\infty=\infty$
and $a+\infty=\infty$ for each $a\in F$. We call the pair $(G,P)$ 
the \emph{projective line over $F$} and denote it by $\hat F$.
We call $\hat F=(G,P)$ a \emph{compact projective line}
if $F$ is infinite, $P$ carries a compact topology and
$G$ acts as a group of homeomorphisms on $P$.

The following, combined with Proposition~\ref{localaltalgebras}, generalizes the
main result of \cite{grund}.

\begin{theorem}\label{TopHua}
Let $F$ be a field, a skew field or an alternative division ring 
and suppose that $\hat F=(G,P)$ is a compact projective line.
Let $F\subseteq P$ be endowed with the subset topology.
Then $F$ is a locally compact, non-discrete and $\sigma$-compact 
field, skew field or alternative division ring.
\end{theorem}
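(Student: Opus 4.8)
My plan is to verify the four asserted properties separately; everything except the continuity of multiplication is soft, so I would dispatch those first and concentrate on multiplication.

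\emph{Local compactness and non-discreteness.} Since $P$ is Hausdorff, $\{\infty\}$ is closed, so $F=P\setminus\{\infty\}$ is an open subspace of the compact Hausdorff space $P$ and is therefore locally compact. To see that $F$ is non-discrete I would argue by contradiction: if $F$ were discrete then $0$ would be isolated in $P$; as $i\in G$ is a homeomorphism with $i(0)=\infty$, the point $\infty$ would be isolated as well, and then each $\tau_a(0)=a$ would be isolated too, so $P$ would be discrete and compact, hence finite, contradicting the infinitude of $F$. The same reasoning shows $\infty\in\overline F$, so $P$ is the one-point compactification of $F$.

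\emph{Addition and inversion.} Each translation $\tau_a$ restricts to a homeomorphism of $F$, so the group operation of $(F,+)$ is separately continuous; since $(F,+)$ is locally compact and Hausdorff, a theorem of Ellis shows it is a topological group, i.e.\ addition is jointly continuous and $x\mapsto -x$ is continuous. Composing negation with the homeomorphism $i\colon x\mapsto -x^{-1}$, which restricts to a homeomorphism of $F^{\times}$, shows that $\iota\colon x\mapsto x^{-1}$ is a homeomorphism of $F^{\times}$.

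\emph{Multiplication.} This is the heart of the matter. I would start from Hua's identity
\[
 aba=a-\bigl(a^{-1}+(b^{-1}-a)^{-1}\bigr)^{-1},
\]
which holds in every associative division ring and, by Artin's theorem (any two elements generate an associative subalgebra), also in every alternative division ring. By the previous step its right-hand side is assembled from continuous operations, so $(a,b)\mapsto aba$ is continuous and each Jordan map $x\mapsto axa$ is a homeomorphism of $F$. What one really needs, however, is separate continuity of all left and right multiplications $\lambda_a\colon x\mapsto ax$ and $\rho_a\colon x\mapsto xa$; since $\rho_a=\iota\lambda_{a^{-1}}\iota$, the two are equivalent. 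The difficulty—and the step I expect to be the main obstacle—is that the symmetric Jordan product $axa$ cannot by itself separate left from right multiplication, and the group generated by the $\tau_a$ and $i$ does not supply $\lambda_a$ as a homeomorphism for a general $a$. I expect to break this symmetry using the compactness of $P$: extending $\lambda_a$ to $P$ by $0\mapsto 0$ and $\infty\mapsto\infty$, I would show that its graph is closed in $P\times P$ (exploiting the continuous Jordan homeomorphisms $x\mapsto axa$ together with the transitivity of the $G$-action on $P$), so that the closed-graph theorem for maps into the compact Hausdorff space $P$ yields continuity of $\lambda_a$ for every $a$. Once every $\lambda_a$ and $\rho_a$ is continuous, Ellis's theorem applied to the locally compact group $F^{\times}$ upgrades separate to joint continuity of multiplication on $F^{\times}$, and joint continuity at the points having a zero factor follows from continuity of addition and continuity at the origin. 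Hence $F$ is a topological field, skew field or alternative division ring.

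\emph{$\sigma$-compactness.} With $F$ now a non-discrete locally compact topological field, skew field or alternative division ring, I would invoke Propositions~\ref{locfieldclass}, \ref{localskewfieldclass} and~\ref{localaltalgebras}, which identify $F$ with one of an explicit list of examples, every one of which is second countable and therefore $\sigma$-compact. (Alternatively, the open subgroup of $(F,+)$ generated by a relatively compact neighbourhood of $0$ is visibly $\sigma$-compact, and one verifies that it has countable index.)
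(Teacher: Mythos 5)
The soft parts of your proposal (local compactness, non-discreteness, continuity of addition and inversion, the Jordan map $(a,b)\mapsto aba$ via Hua's identity) match the paper's argument, but the step you yourself flag as the main obstacle --- separate continuity of $\lambda_a\colon x\mapsto ax$ --- is a genuine gap, not merely an unfinished detail. Your plan is to extend $\lambda_a$ to $P$ and show its graph is closed, ``exploiting the continuous Jordan homeomorphisms together with the transitivity of the $G$-action.'' But the graph of $\lambda_a$ is not visibly closed from this data: the Jordan maps and the group $G$ only give you the symmetric products $axa$ and the M\"obius-type maps, and, as you note, these cannot by themselves distinguish $ax$ from $xa$. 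Some algebraic identity expressing $ax$ in terms of the operations already known to be continuous is indispensable. The paper supplies exactly this: the identity $a^{-1}(b^{-1}(ab)x(ab)b^{-1})a^{-1}=[a,b]x$ shows that $\lambda_c$ is continuous for every multiplicative commutator $c$, and then $a=(1-[a-1,b])([a,b]-[a-1,b])^{-1}$ (for $a,b$ non-commuting) expresses a general $\lambda_a$ as $\lambda_{1-[a-1,b]}\circ\lambda_{[a,b]-[a-1,b]}^{-1}$. Inverting $\lambda_{[a,b]}-\lambda_{[a-1,b]}$ requires the open mapping theorem (Corollary~\ref{open}), hence $\sigma$-compactness of $F$ --- which the paper therefore proves \emph{early}, by a direct topological argument using the compactness of $P$ and the continuity of $(x,y)\mapsto xyx$ (namely $F=\bigcup_n c_n^{-1}Kc_n^{-1}$ for a suitable null sequence $(c_n)$ and compact neighbourhood $K$ of $0$). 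Your plan defers $\sigma$-compactness until after the classification of locally compact fields, so even if you tried to repair the gap with the commutator identity you would lack the tool needed to do so; the direct $\sigma$-compactness argument has to come first.

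Two further points. In the commutative case no commutator trick is available, and the paper instead uses polarization, $2xy=(x+y)^2-x^2-y^2$ (with a separate Frobenius argument in characteristic $2$); your sketch does not address this case. In the octonion case $F^{\times}$ is only a Moufang loop, so Ellis's theorem cannot be ``applied to the locally compact group $F^{\times}$''; the paper handles this case by showing the centre $k$ is closed (via a characterization of $k$ by an identity in the Jordan maps), deducing that $F$ is a finite-dimensional locally compact topological vector space over $k$ (or $k^2$), and concluding continuity of multiplication from bilinearity together with Corollary~\ref{open}. Your reduction of $\sigma$-compactness to the classification results (Propositions~\ref{locfieldclass}, \ref{localskewfieldclass}, \ref{localaltalgebras}) would be legitimate once $F$ is known to be a topological division ring, but as explained above the argument cannot be ordered that way.
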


\begin{proof}
We divide the proof into several steps. We note that $P$ is topologically
just the one-point compactification of $F$.

\medskip\noindent
{\bf Step 1.} {\it The additive group $(F,+)$ is a locally compact topological group.
Moreover, the extended addition $F\times P\to P$ is continuous.}

\begin{proof}
See \cite[Theorem~9.4]{warner} and its proof.
\end{proof}

\noindent
{\bf Step 2.} {\it Both inversion and the map $(x,y)\mapsto xyx$ 
from $F\times F$ to $F$ are continuous.} 

\begin{proof}
We start with Hua's identity:
$$xyx=(x^{-1}-(x+y^{-1})^{-1})^{-1}-x$$
for all $x,y\in F$ such that $xy\neq 0,-1$.
(When $F$ is an octonion division algebra, then
by \cite[20.22]{TW}, the
subring generated by any two elements of $F$ is
associative. Thus Hua's identity holds also 
in this case.) The
map $x\mapsto -x$ is a homeomorphism of $F$, so it extends
continuously to its one-point compactification $P$.
Since the map $i:x\mapsto -x^{-1}$ is continuous,
it follows that the extended inversion $x\mapsto x^{-1}$ from $P$ to $P$
is also continuous. It remains only to check that with these
extended maps, Hua's identity holds also when $xy=0$ or $xy=-1$.
\end{proof}

\noindent
{\bf Step 3.} {\it The map $x\mapsto x^2$ is continuous on $P$, where $\infty^2=\infty$.}

\begin{proof}
On $F=P\backslash\{\infty\}$ 
we have $x^2=x\cdot 1\cdot x$ and on $P\setminus\{0\}$ we have $x^2=-i(i(x)^2)$.
The claim holds, therefore, by Step~2.
\end{proof}

\noindent
{\bf Step 4.} {\it $F$ is $\sigma$-compact and not discrete.}

\begin{proof}
The compact set $P$ is infinite and homogeneous. It follows that
$F$ is not discrete and there exists a countably infinite set
$C=\{c_n;\ n\geq 1\}\subseteq F\setminus\{0\}$ that has $0$ in its closure.
Let $K$ be a compact neighborhood of $0$.
The map $\phi:(x,y)\mapsto xyx$ is continuous. For $z\in F$ we have
by continuity $0\in\phi(\bar C\times \{z\})\subseteq\overline{\phi(C\times\{z\})}$.
Therefore, $c_nzc_n\in K$ for some $n$. It follows that
$$F=\bigcup_{n\in{\mathbb N}}c_n^{-1}Kc_n^{-1}.$$
Thus $F$ is $\sigma$-compact.
\end{proof}

From now on, we let $k$ denote the center of $F$.

\medskip\noindent
{\bf Step 5.} {\it If $\mathrm{char}(F)\ne2$, then the map
$x\mapsto2x$ from $F$ to itself 
is a homeomorphism and if $\mathrm{char}(F)=2$, 
then $k$ is a closed subset of $F$ and the Frobenius map from $k$ to $k^2$
is a homeomorphism.}

\begin{proof}
If $\mathrm{char}(F)\ne2$, then the
map $x\mapsto2x$ is continuous (by Step~1) and bijective, 
hence (by Step~4 and Corollary~\ref{open})
a homeomorphism.
Suppose that $\mathrm{char}(F)=2$. In this case,
$k=\{x\in F;\ (x+y)^2=x^2+y^2\text{ for all }y\in F\}$. By Steps~1 and ~3, it follows
that $k$ is closed in $F$. By Step~3,
the map $x\mapsto x^2$ from $k\cup\{\infty\}$ to $k^2\cup\{\infty\}$ is continuous. 
Since the set $k\cup\{\infty\}$ is the one-point compactification of 
the closed subset $k\subseteq F$, this map is a homeomorphism. Thus
the Frobenius map from $k$ to $k^2$ is a homeomorphism.
\end{proof}

By Steps~1 and~2, addition and inversion in $F$ are continuous maps.
To conclude that $F$ is a topological field (or skew field
or alternative division ring), it thus remains only to show that
multiplication is also continuous.

\medskip\noindent
{\bf Step 6.} {\it If $F=k$ is a field, then multiplication is continuous.}

\begin{proof}
If $\mathrm{char}(k)\ne2$, then 
by Steps~1 and~3, the quantity
$$2xy=(x+y)^2-x^2-y^2$$ 
depends continuously on $x,y$, so by Step~5, 
the map $(x,y)\mapsto xy$ is continuous.
If $\mathrm{char}(k)=2$, then 
by Step~4, the inverse $s$ of the Frobenius map
$x\mapsto x^2$ is continuous, so by Steps~2 and~3, 
the map $(x,y)\mapsto s(xy^2x)=xy$ is continuous.
\end{proof}

\noindent
{\bf Step 7.} {\it If $F$ is a skew field, then multiplication is continuous.}

\begin{proof}
We put $\lambda_c(x)=cx$ for all $c,x\in F$ and $[a,b]=a^{-1}b^{-1}ab$
for all $a,b\in F^*:=F\backslash\{0\}$. Then
$$a^{-1}(b^{-1}(ab)x(ab)b^{-1})a^{-1}=[a,b]x=\lambda_{[a,b]}(x)$$ 
for all $a,b\in F^*$. By
Step~2, the maps $\lambda_c$ are thus continuous
for all multiplicative commutators $c\in[F^*,F^*]$.
By Step~6, we can assume that $F$ is not commutative. 
Let $a,b$ be non-commuting elements of $F$.
Then $a=(1-[a-1,b])([a,b]-[a-1,b])^{-1}$.
By 
Corollary~\ref{open}, the continuous automorphism
$$\lambda_{[a,b]-[a-1,b]}=\lambda_{[a,b]}-\lambda_{[a-1,b]}$$ 
of $(F,+)$ is a homeomorphism and therefore
$$\lambda_a=\lambda_{1-[a-1,b]}\circ\lambda^{-1}_{[a,b]-[a-1,b]}$$
is continuous on $F$. Since $a$ is an arbitrary non-central element
and every element in
$F$ is a sum of two non-central elements, we conclude that
the maps $\lambda_c$ are continuous for all $c\in F$.
Similarly, the maps $x\mapsto xc$ are continuous for all $c\in F$.
By \cite[Theorem~11.17]{warner}, it follows 
that the map $(x,y)\mapsto xy$ is continuous.
\end{proof}

\noindent
{\bf Step 8.} {\it If $F$ is a nonassociative alternative division algebra, 
then multiplication is continuous.}

\begin{proof}
By Proposition~\ref{localaltalgebras}, 
$F$ is an octonion division algebra and $k$ is infinite.
By \cite[20.8 and 20.22]{TW}, every subalgebra of $F$
generated by two elements is contained in an associative division ring. 
Let $j_y(x)=yxy$ for all $x,y\in F$. 
We claim that $k$ is closed in $F$. By Step~5, we can assume
that $\mathrm{char}(k)\ne 2$. We will show that in this case,
\begin{equation}\label{upp2}
k = \{ a\in F \mid j_a (j_x(y)) = j_x( j_a (y))
    \text{\ for all\ } x,y\in F\}.
\end{equation}
By Step~2, this will suffice to prove our claim. 
Let $a\ne 0$ belong to the set on the right-hand side of \eqref{upp2}. Then 
\begin{equation}\label{upp1}
a(xyx)a = x(aya)x
\end{equation}
for all $x,y \in F$. Setting
$y=a^{-1}$ in \eqref{upp1} yields $a(xa^{-1}x)a = xax$. Hence
$a^{-1}xa$ commutes with $x$ for all $x\in F$. 
Fix $x\in F\backslash\{k\}$ and 
let $B$ denote the subalgebra of $F$ generated by $x$.
By \cite[20.9]{TW}, $B$ is a subfield of $F$ and $B/k$ is 
a quadratic extension. By \cite[20.20, 20.21]{TW}, it follows
that the centralizer of $B$ in $F$ is $B$ itself.
Thus $a^{-1}Ba=B$. Hence
conjugation by $a$ induces a $k$-automorphism of order at most~2 on $B$
and therefore $a^2$ centralizes $x$. Since $x$ is arbitrary, it follows that $a^2\in k$.
Setting $y=1$ in \eqref{upp1}, we therefore have $ax^2 a = xa^2 x=x^2a^2$ for all $x \in F$. 
Thus $a$ commutes with $x^2$  and hence with 
$x=((x+1)^2-x^2-1)/2$ for all $x\in F$, so $a\in k$. Therefore \eqref{upp2} holds.
We conclude that $k$ is closed (in all characteristics) as claimed.

By Step~6, $k$ is locally compact 
and by Step~4, $k$ is $\sigma$-compact. We have
\begin{align*}
2xa&=\big((x+1)^2-x^2-1\big)a\\
&=j_{x+1}(a)-j_x(a)-a
\end{align*} 
for all $x\in k$ and $a\in F$. Thus
if $\mathrm{char}(k)\ne2$, then by Steps~1, 2 and~5, 
the map $(x,a)\mapsto xa$ from $k\times F$ to $F$
is continuous and therefore 
$F$ is a topological vector space over $k$.
Suppose that $\mathrm{char}(k)=2$ and let $s$ denote the inverse of the
Frobenius map from $k$ to $k^2$. Since
$xa=j_{s(x)}(a)$ for all $x\in k^2$ and all $a\in F$,
it follows from Steps~2 and~5 that 
$F$ is a topological vector space over $k^2$. By Proposition~\ref{locfieldclass}(i), 
$\dim_{k^2}k=2$ and hence $\dim_{k^2}F=16$ if $\mathrm{char}(k)=2$.
By Step~5, $k^2$ is homeomorphic to $k$ and hence also 
locally compact and $\sigma$-compact. We conclude that
$F$ is a locally compact topological vector space of finite dimension $m$
(equal to 8 or 16) over a locally compact
and $\sigma$-compact field $L$ (equal to $k$ or $k^2$)
in all characteristics. 

Let $v_1,\ldots,v_m$ be a basis for $F$. The map $L^m\to F$ that
sends $(a_1,\ldots,a_m)$ to $a_1v_1+\cdots+a_mv_m$ is a continuous
bijective homomorphism. Since $L$ is locally compact and $\sigma$-compact, so is $L^m$.
Hence this map is a homeomorphism by Corollary~\ref{open}.
We conclude that multiplication in $F$ is continuous (since it is bilinear over $L$).

\end{proof}

\noindent
With this last step, the proof of Theorem~\ref{TopHua} is complete.
\end{proof}

\noindent
\begin{remarks}\label{abc90} 
We note that another proof of Theorem~\ref{TopHua} can be derived from \cite[Satz~1.2]{petersson}.
If we add ``totally disconnected'' to the hypotheses of Theorem~\ref{TopHua}, then
by Proposition~ \ref{localaltalgebras},
we can add ``$F$ is associative'' to the conclusions. 
\end{remarks}

\smallskip
\section{Buildings}\label{thm1p}

\noindent
In this section, we briefly record some basic notions and facts for buildings
which can be found in 
\cite{abramenko}, 
\cite{brown}, \cite{ronan}, \cite{tits-spherical} and \cite{weiss1}.
We view buildings as simplicial complexes. 

\smallskip\noindent
\textbf{Simplicial complexes.}
Let $V$ be a set and $S$ a collection of finite subsets
of $V$. If $\bigcup S=V$ and if $S$ is closed under going down
(i.e., $a\subseteq b\in S$ implies $a\in S$), then the poset
$(S,\subseteq)$ is called a \emph{simplicial complex}.
More generally, any poset isomorphic to such a poset $S$
will be called a simplicial complex.
The \emph{join} $S*T$ of two simplicial complexes $S,T$ is
the product poset; it is again a simplicial complex.
The automorphism group of a simplicial complex is the group of all
its poset automorphisms.

\smallskip\noindent
\textbf{Coxeter groups and buildings.}
Let $(W,I)$ be a Coxeter system. Thus $W$ is a group with a (finite)
generating set $I$ consisting of involutions and a presentation
of the form $W=\langle I;\ (ij)^{{\rm ord}(ij)}=1\text{ for all }i,j\in I\rangle$,
where the relation $(ij)^{{\rm ord}(ij)}=1$ is to be ignored whenever 
${\rm ord}(ij)=\infty$. For a subset
$J\subseteq I$ we put $W_J=\langle J\rangle$. Then $(W_J,J)$ is again a Coxeter
system. The poset $\Sigma=\bigcup\{W/W_J;\ J\subseteq I\}$, ordered by
reversed inclusion, is a simplicial complex, the \emph{Coxeter complex}
$\Sigma=\Sigma(W,I)$. The \emph{type} of a simplex $wW_J$ is
$t(wW_J)=I\setminus J$. The type function may be viewed as a non-degenerate
simplicial epimorphism from $\Sigma$ to the power set $2^I$ of $I$
viewed as a simplicial complex. We refer to \cite{humphreys} for more
details on Coxeter groups.

A (thick) \emph{building} $\Delta$ is a simplicial complex together with
a collection of subcomplexes called apartments which are
isomorphic to a fixed Coxeter complex $\Sigma$. The apartments
have to satisfy the following two conditions.
\begin{thmenum}
\item[{\bf(B1)}] For any two simplices $a,b\in\Delta$, there is an apartment $A$
containing $a,b$.
\item[{\bf(B2)}] If $A,A'\subseteq\Delta$ are apartments containing the simplices $a,b$, 
then there is a (type preserving) isomorphism $A\to A'$ fixing $a$ and $b$.
\item[{\bf(B3)}] Every non-maximal simplex is contained in at least three maximal
simplices.
\end{thmenum}
The property (B3) is called {\it thickness}. Sometimes this axiom is omitted, but for our purposes
it is convenient to assume that buildings are thick. Non-thick buildings can in a certain
way be reduced to joins of thick buildings and spheres.
An automorphism of a building is just a simplicial automorphism. 

The join of two buildings
is again a building. Conversely, a building decomposes as a join
if its Coxeter group is decomposable, i.e., if $I\subseteq W$
decomposes into two subsets which centralize each other;
see \cite[3.10]{ronan}. A building is called {\it irreducible} if
its Coxeter group is not decomposable. 

The type functions of the apartments are
pairwise compatible and extend to a non-degenerate simplicial epimorphism
$t\colon\Delta\to 2^I$. The cardinality of $I$ is the \emph{rank} of
the building, so 
$$\mathrm{rank}(\Delta)=\dim(\Delta)+1.$$ 
The automorphisms of a building that preserve the type function 
are called \emph{special automorphisms}.
A building of rank~1 is just a set (of cardinality at least~3 since we are
assuming buildings to be thick), where the apartments are the two-element subsets.

\smallskip\noindent
\textbf{Chambers and galleries.}
The maximal simplices in a building are called \emph{chambers}.
The \emph{chamber graph} of $\Delta$ is the undirected graph
whose vertices are the chambers of $\Delta$. Two chambers are
\emph{adjacent} if they have a codimension $1$ simplex in common.
A \emph{gallery} is a simplicial path in the chamber graph, and a
nonstammering gallery is a path where consecutive chambers are
always distinct. The chamber graph of a building is always
connected. A \emph{minimal gallery} is a shortest path in the
chamber graph.

\smallskip\noindent
\textbf{The distance function.}
Let $\mathbf i=(i_1,\ldots,i_m)$ be a sequence in $I^m$.
A gallery of type $\mathbf i$ is a gallery
$(c_0,c_1,\ldots,c_m)$ where $c_{k-1}\cap c_k$
contains a simplex of type $I\setminus\{i_k\}$.
If $(c_0,c_1,\ldots,c_m)$ is a minimal gallery of type $\mathbf i$, then the product
$w=i_1\cdots i_m\in W$ is independent of the chosen gallery (i.e., the types of all
minimal galleries yield the same element $w\in W$). The $W$-valued distance
$\delta$ is defined by $\delta(c_0,c_m)=w$. Thus 
$$\delta(a,b)=\delta(b,a)^{-1}$$
for all chambers $a,b\in \Delta$ and $\delta(a,b)=1$ if and only if $a=b$. 

\smallskip\noindent
\textbf{Residues and panels.}
Let $a\in\Delta$ be a simplex of type $J$. The \emph{residue} of $a$ is the poset
$\mathrm{Res}(a)$ consisting
of all simplices containing $a$; this poset is again a
building, whose Coxeter complex is modeled on $W_{I\setminus J}$
\cite[3.12]{tits-spherical}.
If $a$ is a simplex of codimension $1$ and type $I\setminus\{j\}$,
then ${\rm Res}(a)$ is called a \emph{$j$-panel}.

\smallskip\noindent
\textbf{Spherical buildings.}
A Coxeter complex $\Sigma$ is called \emph{spherical} if it is
finite and a building is called \emph{spherical} if its apartments are finite.

\smallskip
\section{Compact buildings}
\noindent
\begin{definition}\label{abc91}
Let $\Delta$ be a spherical building with Coxeter system $(W,I)$.
We call $\Delta$ a \emph{compact spherical building} if for
each $i$ the set
$V_i$ of vertices of type $i$ carries a compact topology such that 
the set of chambers is closed in the product 
$\prod_{i\in I}V_i$, where we view a chamber as an $I$-tuple of vertices.
It is easy to see that this agrees with the definition of a compact
spherical building given in \cite[1.1]{burns}.
In the rank~2 case it also agrees with the definition of a
compact generalized polygon; see \cite{knarr}, \cite{vanmaldeghem}, \cite{kramer-diss}.
We say that $\Delta$ is connected, totally disconnected, etc., if the set of 
chambers is connected, totally disconnected, etc.
\end{definition}
Our first observation is that compact spherical buildings arise from
transitive actions of compact groups on spherical buildings.
\begin{lemma}\label{gup1}
Let $\Delta$ be a spherical building and suppose  that a group $K$ acts as a
chamber transitive automorphism group. Let $\{v_i;\ i\in I\}$
denote the vertices of a fixed chamber $c$. If $K$ is a compact group
and if the stabilizers $K_{v_i}$ are closed in $K$, then
$\Delta$ is a compact building if
we endow the set of $i$-vertices with the compact topology 
of $K/K_i$. 
If $K$ is totally disconnected, then $\Delta$ is totally disconnected.
\end{lemma}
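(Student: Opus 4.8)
The plan is to realize each ingredient of the required structure as a homogeneous space of the compact group $K$ and then read off compactness, closedness, and (under the extra hypothesis) total disconnectedness from standard facts about such spaces. Throughout I regard $K$ as acting by type-preserving automorphisms, as the notation $K_i=K_{v_i}$ and the identification with $K/K_i$ presuppose; chamber transitivity then forces the type-$i$ vertex of $k\cdot c$ to be $k\cdot v_i$, so the set $V_i$ of all type-$i$ vertices is exactly the single orbit $K\cdot v_i$. The evaluation bijection $K/K_{v_i}\to V_i$, $kK_{v_i}\mapsto k\cdot v_i$, transports the quotient topology to $V_i$. Since $K$ is compact and $K_{v_i}$ is closed, $K/K_{v_i}$ is compact Hausdorff and the natural left action of $K$ on it is continuous; this is the asserted compact topology on $V_i$.

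First I would treat the main point, that the set $\mathrm{Ch}(\Delta)$ of chambers is closed in $\prod_{i\in I}V_i$ (a finite product, hence compact Hausdorff, as $I$ is finite). Let $K_c=\bigcap_{i\in I}K_{v_i}$ be the stabilizer of $c$; it is closed, being a finite intersection of the closed subgroups $K_{v_i}$, so $K/K_c$ is again compact Hausdorff, and by chamber transitivity $kK_c\mapsto k\cdot c$ is a bijection $K/K_c\to\mathrm{Ch}(\Delta)$. Consider the map
$$\varphi\colon K/K_c\longrightarrow\prod_{i\in I}V_i,\qquad kK_c\longmapsto (k\cdot v_i)_{i\in I}.$$
It is well defined because $K_c\subseteq K_{v_i}$ for each $i$, and continuous because each component $K/K_c\to V_i=K/K_{v_i}$ is the canonical projection of coset spaces. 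It is injective: if $k\cdot v_i=k'\cdot v_i$ for all $i$, then $k^{-1}k'\in\bigcap_i K_{v_i}=K_c$. Under the identification of a chamber with its $I$-tuple of vertices, the image of $\varphi$ is precisely $\mathrm{Ch}(\Delta)$. Since a continuous injection from a compact space into a Hausdorff space is a homeomorphism onto its (compact, hence closed) image, $\mathrm{Ch}(\Delta)$ is closed in $\prod_{i\in I}V_i$, and $\Delta$ is a compact spherical building in the sense of Definition~\ref{abc91}.

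For the last assertion suppose in addition that $K$ is totally disconnected. Being compact and totally disconnected, $K$ is profinite, so by van Dantzig's theorem the identity has a neighbourhood basis of open (equivalently clopen, finite-index) normal subgroups; moreover, since $K_c$ is closed one has $K_c=\bigcap_N K_cN$, the intersection taken over all open normal subgroups $N$ of $K$. I would then separate points of $K/K_c$ by clopen sets: given $aK_c\ne bK_c$, i.e.\ $a^{-1}b\notin K_c$, choose an open normal subgroup $N$ with $a^{-1}b\notin K_cN$; then $K_cN$ is an open finite-index subgroup, the coset map $K/K_c\to K/K_cN$ onto a finite discrete space is continuous, and the preimage of $aK_cN$ is a clopen subset of $K/K_c$ containing $aK_c$ but not $bK_c$. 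Hence $K/K_c\cong\mathrm{Ch}(\Delta)$ is totally disconnected, as required.

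I expect the only genuinely delicate point to be the closedness of $\mathrm{Ch}(\Delta)$: everything rests on the observation that $\mathrm{Ch}(\Delta)$ is a single $K$-orbit whose stabilizer $K_c$ is closed, which makes $\varphi$ a closed embedding. The total-disconnectedness step is routine once the profinite structure of $K$ is invoked, and the initial identification $V_i=K/K_{v_i}$ is immediate from chamber transitivity together with type preservation.
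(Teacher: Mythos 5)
Your proof is correct and follows essentially the same route as the paper: the chamber set is realized as the continuous image of the compact Hausdorff space $K/K_c$ inside the Hausdorff product $\prod_{i\in I}V_i$, hence compact and therefore closed. The only cosmetic difference is the final step, where you derive total disconnectedness of $K/K_c$ directly from van Dantzig's theorem, whereas the paper simply cites the standard fact that coset spaces of a compact totally disconnected group are totally disconnected.
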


\begin{proof}
The group $K_c=\bigcap\{K_{v_i};\ i\in I\}$ is closed
and the natural map
$$K/K_c\to\prod_{i\in I}K/K_{v_i}$$
is continuous onto the set of chambers. Since $K/K_c$ is compact,
the set of chambers is compact and therefore closed.
If $K$ is totally disconnected, then each coset space
$K/K_v$ is also totally disconnected; see \cite[7.11]{hewitt}.
\end{proof}

\begin{proposition}\label{pop3}
Let $G$ be a noncompact simple real or complex Lie group and let
$\Delta$ denote the spherical building associated to the canonical
Tits system of $G$; see \cite[p.\,68]{gwarner}. Then $\Delta$
carries in a natural way a compact topology which turns $\Delta$
into a compact building. Let $K\subseteq G$
be a maximal compact subgroup and let $X=G/K$ denote the associated
Riemannian symmetric space of noncompact type. Then $\Delta$ can be
identified with the spherical building at infinity of $X$.
The compact topology on $\Delta$ coincides with the topology induced
on $\Delta$ by the cone topology on $\partial X$. See \cite[II.8]{bridson}
for the latter.
\end{proposition}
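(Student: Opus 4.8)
The plan is to route everything through the maximal compact subgroup $K$, using Lemma~\ref{gup1} to produce the compact topology and Theorem~\ref{OpenAction} to match it with the cone topology. First I would establish the compact structure. By the Iwasawa decomposition $G=KAN$ one has $G=KB$, where $B$ is the minimal parabolic subgroup defining the canonical Tits system, and more generally $G=KP_i$ for each maximal parabolic $P_i$. Hence $K$ acts transitively on the chambers $G/B$ of $\Delta$ and on the type-$i$ vertices $G/P_i$, while the stabilizer in $K$ of the vertex corresponding to $P_i$ is the closed subgroup $K\cap P_i$ of the compact group $K$. Lemma~\ref{gup1} then applies verbatim and endows each vertex set $V_i$ with the compact topology of $K/(K\cap P_i)$, turning $\Delta$ into a compact building. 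This is the asserted natural compact topology.

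Next I would recall the standard identification of $\Delta$ with the building at infinity. Since $X=G/K$ is a Riemannian symmetric space of noncompact type, its spherical building at infinity $\partial_\infty X$---the simplicial structure carried by the visual boundary $\partial X$ via the Tits angular metric---is canonically and $G$-equivariantly isomorphic to the spherical Tits building $\Delta$ of $G$; the chamber $gB$ corresponds to the asymptote class of a Weyl chamber and the type-$i$ vertex $gP_i$ to a singular boundary direction. Under the geometric realization each type-$i$ vertex is sent to a single, well-defined point of $\partial X$, yielding a $K$-equivariant injection $\iota_i\colon V_i\hookrightarrow\partial X$; equipping $V_i$ with the subspace topology inherited from the cone topology on $\partial X$ is exactly what is meant by the topology induced by the cone topology. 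For the relevant facts about $\partial X$ and the cone topology I would cite \cite[II.8]{bridson}.

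Finally I would show that the two topologies agree, which is where the argument gets its real force. The group $K$ acts by isometries on the Hadamard space $X$ fixing the base point $x_0=eK$, and isometries act continuously on the boundary in the cone topology \cite[II.8]{bridson}; hence the orbit map $k\mapsto k\cdot\iota_i(\xi_i)$ from $K$ to $\partial X$ is continuous, where $\xi_i$ is the base type-$i$ vertex. Its image is $\iota_i(V_i)$ by $K$-equivariance and transitivity, and it is compact, being the continuous image of the compact group $K$. Applying Theorem~\ref{OpenAction} to this transitive continuous action of the compact (hence $\sigma$-compact) group $K$ on the locally compact space $\iota_i(V_i)$, the induced map $K/(K\cap P_i)\to\iota_i(V_i)$ is a homeomorphism. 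But its source carries precisely the topology supplied by Lemma~\ref{gup1} and its target carries precisely the cone-induced topology, so the two topologies on $V_i$ coincide for every $i$, and therefore on $\Delta$. I expect the one genuine point of care to be the equivariant identification of $\Delta$ with $\partial_\infty X$ together with the fact that vertices map to single boundary points; once that is secured, the compactness of $K$ lets the open-mapping step (Theorem~\ref{OpenAction}) do the remaining work automatically, with no separate verification of local compactness or closedness of $\iota_i(V_i)$ needed.
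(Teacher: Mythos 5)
Your proposal is correct and follows essentially the same route as the paper: first the Iwasawa decomposition $G=KB$ together with Lemma~\ref{gup1} to obtain the compact topology from $K$, then compactness of $K$ and continuity of its action on $\partial X$ to identify the $K$-orbit $K/K_\xi$ with the cone-topology boundary point set. The paper phrases the last step as a direct ``compact orbit is homeomorphic to the coset space'' argument for geodesic rays rather than invoking Theorem~\ref{OpenAction} on vertex sets, but this is only a cosmetic difference.
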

\begin{proof}
We refer to \cite{gwarner} and \cite{eberlein} for the following facts.
Let $G=KAU$ be an Iwasawa decomposition of $G$; see \cite[Section~IX.1]{helgason}.
Note that we call the nilpotent part $U$ instead of $N$ since $N$ has a
different meaning for Tits systems. Let $K_0={\rm Cen}_K(A)$. Then
$K_0$ is the reductive anisotropic kernel of $G$ and $B=K_0AU$ is a minimal
parabolic subgroup. Let $N={\rm Nor}_G(K_0A)$. Then $(B,N)$ constitutes
a Tits system for $G$, of rank $\dim A$.
The parabolics of this Tits system are closed subgroups. The group $K$ acts
transitively on the chambers of the corresponding spherical building $\Delta$,
since $G=KB$. Because the parabolics are closed in $G$, the vertex stabilizers
in $K$ are also closed. By Lemma~\ref{gup1}, $\Delta$ is a compact building and
$G$ acts continuously.
Let $x=K$ denote the unique fixed point of $K$ in $G/K$. Then $K$ acts
continuously on the compact space $X\cup \partial X$; see \cite[II.8.8]{bridson}.
Recall that each boundary
point in $\partial X$ is represented by a geodesic ray starting at $x$
\cite[II.8.2]{bridson}.
Let $\xi$ be such a geodesic ray starting at $x$. Since $K$ is compact, the
$K$-orbit of $\xi$ is homeomorphic to the coset $K/K_\xi$.
The $G$-stabilizer of $\xi(\infty)$ is a parabolic $P$ (see \cite[2.17]{eberlein})
and therefore we have a $K$-equivariant homeomorphism $G/P\cong K/K_\xi$.
\end{proof}
In a similar way, we have the following result for Bruhat-Tits buildings.
Recall that by `Bruhat-Tits building,' we mean an irreducible affine building whose
spherical building at infinity is Moufang.
\begin{proposition}
\label{abc97}
Let $X$ denote a locally finite Bruhat-Tits building. Then ${\rm Aut}(X)$ is a totally
disconnected locally compact group. The spherical building at infinity, $\Delta$, is in a
natural way a totally disconnected compact building on which ${\rm Aut}(X)$ acts
continuously.
The compact topology on $\Delta$ coincides with the topology induced
on $\Delta$ by the cone topology on $\partial X$ with respect to the
canonical ${\rm CAT}(0)$ metric on $X$.
\end{proposition}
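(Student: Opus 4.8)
The plan is to run the affine analogue of the argument for Proposition~\ref{pop3}, using the canonical ${\rm CAT}(0)$ metric on $X$ in place of the symmetric-space metric and Theorem~\ref{IsometriesOfProperSpace} in place of the Lie-theoretic input. First I would observe that since $X$ is locally finite, its ${\rm CAT}(0)$ realization is a proper metric space: closed balls are compact. Every simplicial automorphism is an isometry, and conversely the limit (uniformly on compacta) of simplicial isometries is again simplicial because the vertex set is uniformly discrete; hence ${\rm Aut}(X)$ is a closed subgroup of $\mathrm{Iso}(X)$. Theorem~\ref{IsometriesOfProperSpace} then shows that $\mathrm{Iso}(X)$, and so ${\rm Aut}(X)$, is locally compact and second countable, and that the stabilizer $K$ of a special vertex $x$ is compact. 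Local finiteness makes $\bar B_n(x)$ contain only finitely many vertices, so $K/K_n$ is finite for all $n$ and $K$ is totally disconnected by Theorem~\ref{IsometriesOfProperSpace}.

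To upgrade this to total disconnectedness of the whole group, I would exhibit a compact open totally disconnected subgroup: the pointwise stabilizer $U$ of the finitely many vertices in some ball $\bar B_n(x)$ is open (an automorphism moving no vertex by more than the minimal distance between distinct vertices must fix every vertex in the ball), is contained in the compact $K$, and is totally disconnected; since an open totally disconnected subgroup forces the identity component to be trivial, ${\rm Aut}(X)$ is totally disconnected. This proves the first assertion.

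Next I would bring in the boundary. For a proper ${\rm CAT}(0)$ space, $X\cup\partial X$ is compact and $\partial X$ carries the compact Hausdorff cone topology, with $\mathrm{Iso}(X)$---hence ${\rm Aut}(X)$---acting continuously on $X\cup\partial X$ (see \cite[II.8]{bridson}). The Tits boundary realizes $\Delta$: each vertex of $\Delta$ of type $i$ is a point of $\partial X$, so $V_i\subseteq\partial X$ inherits the cone topology, and each chamber is recorded by its $I$-tuple of vertices as in Definition~\ref{abc91}. The decisive point is that the compact group $K=\mathrm{Aut}(X)_x$ acts transitively on the chambers of $\Delta$. Since $\Delta$ is Moufang, its root groups are induced by automorphisms of $X$ (this is built into the Bruhat--Tits construction; see \cite{weiss3}), so ${\rm Aut}(X)$ acts strongly transitively on $\Delta$; fixing the special vertex $x$, every chamber of $\Delta$ is represented by a unique sector with tip $x$ and $K$ permutes these sectors transitively, which is the affine Iwasawa decomposition $\mathrm{Aut}(X)=K\cdot B$ with $B$ the stabilizer of a chamber of $\Delta$. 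The stabilizers $K_{v_i}$ are closed, being stabilizers of points of the compact space $\partial X$.

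With these in hand, Lemma~\ref{gup1} applies to the compact totally disconnected group $K$ and shows that $\Delta$ is a totally disconnected compact building, with $V_i$ carrying the quotient topology of $K/K_{v_i}$. It remains to match this with the cone topology. For each $i$ the orbit map $K/K_{v_i}\to V_i\subseteq\partial X$ is a continuous bijection (continuous because $K$ acts continuously on $\partial X$, bijective by transitivity) from a compact space to a Hausdorff space, hence a homeomorphism; therefore the Lemma~\ref{gup1} topology on each $V_i$, and so on the chamber set, coincides with the cone topology, and continuity of the full ${\rm Aut}(X)$-action on $\Delta$ is inherited from its continuity on $\partial X$. I expect the genuine obstacle to be the transitivity of the compact stabilizer $K$ on the chambers at infinity: the properness, local compactness, total disconnectedness and the compact-to-Hausdorff topology comparison are routine once Theorem~\ref{IsometriesOfProperSpace} and Lemma~\ref{gup1} are available, whereas the transitivity rests on the Moufang and strong-transitivity structure supplied by Bruhat--Tits theory.
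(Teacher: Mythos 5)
Your proposal is correct and follows essentially the same route as the paper: the canonical ${\rm CAT}(0)$ metric makes $X$ proper, Theorem~\ref{IsometriesOfProperSpace} gives local compactness and a compact totally disconnected vertex stabilizer $K$, the transitivity of $K$ on chambers at infinity (which you rightly flag as the one substantive input, supplied by Proposition~\ref{abc98}(iv) via the Iwasawa decomposition) feeds into Lemma~\ref{gup1}, and the compact-to-Hausdorff bijection identifies the resulting topology with the cone topology. The only cosmetic difference is your use of a compact open subgroup to get total disconnectedness of ${\rm Aut}(X)$, where the paper argues directly that a connected subset has singleton orbits on the discrete vertex set; both are fine.
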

\begin{proof}
We endow the geometric realization of $X$ with the unique metric which
extends the $W$-invariant euclidean metric on the apartments. In this way,
$X$ becomes a CAT(0) space; see \cite[II.10A.4]{bridson} and
\cite[3.2]{bruh}. Moreover, each closed ball in $X$ is contained in a finite
subcomplex and is therefore compact.
It follows from Theorem~\ref{IsometriesOfProperSpace} that the isometry group
$\mathrm{Iso}(X)$ is locally compact and the stabilizer of any vertex $x\in X$ is compact.
Moreover, the isometry group ${\rm Iso}(X)$ coincides with the combinatorial automorphism
group ${\rm Aut}(X)$; here note that because $X$ is thick, every isometry is a simplicial
automorphism.
For every $n\geq 1$, the closed ball $\bar B_n(x)$ is contained in a finite
metric simplicial subcomplex of $X$. Therefore $\mathrm{Iso}(X)$ induces a finite isometry group
on $\bar B_n(x)$. By Theorem~\ref{IsometriesOfProperSpace}, $\mathrm{Iso}(X)_x$ is totally disconnected.
We claim that $\mathrm{Iso}(X)$ is also totally disconnected.
If $Z\subseteq \mathrm{Iso}(X)$ is a connected subset, then $Z$ intersects every stabilizer
trivially. If $x$ is a simplicial vertex, then the orbit $Z.x$ is a connected set of vertices.
But simplicial vertices are isolated in $X$.
Therefore $Z.x=\{y\}$ consists of a single element, and so does $Z$.

Now let $x$ be a special vertex. By Proposition~\ref{abc98}(iv) below, the compact group
$K=\mathrm{Iso}(X)_x$
acts transitively on the chambers of the spherical building at infinity.
The rest of the proof is completely analogous to the previous proof.
The $K$-stabilizer of each geodesic ray $\xi$ starting in $x$ is closed.
By Lemma~\ref{gup1}, the spherical building at infinity is a totally disconnected
compact building.
\end{proof}
Suppose now that $\Delta$ is a compact building over $I$ and let
$C$ denote its set of chambers. For each sequence $\mathbf i=(i_1,\ldots,i_m)\in I^m$,
we have the set 
of all (possibly stammering) galleries of
type $\mathbf i$. This is clearly a closed, whence compact, subset of
$C^{m+1}$. The following observations are immediate consequences.
\begin{lemma}
Let $\Delta$ be a compact spherical building and $C$ its set of chambers.
Let $m\geq 1$.
\begin{thmenum}\label{BasicLemmaOnCompactBuildings}
 \item For each $\mathbf i\in I^m$, the set of all pairs of chambers
that can be joined by a gallery of type $\mathbf i$ is compact.
 \item For each $\mathbf i\in I^m$, the set of all chambers
that can be reached from a given chamber by a gallery of type $\mathbf i$ is compact.
 \item The set of all chambers in any residue of $\Delta$ is compact. In particular,
panels are compact.
 \item The set of all pairs of opposite chambers is open in $C\times C$.
 \item The set of all chambers opposite a given chamber is open in $C$.
\end{thmenum}
\end{lemma}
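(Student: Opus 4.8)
The plan is to read off all five statements from the single compact object already identified in the text, namely the set $\Gamma(\mathbf i)\subseteq C^{m+1}$ of all (possibly stammering) galleries of type $\mathbf i$, which is closed and hence compact. First I would dispose of parts~(i) and~(ii): the set in~(i) is the image of $\Gamma(\mathbf i)$ under the continuous projection $(c_0,\dots,c_m)\mapsto(c_0,c_m)$, while the set in~(ii) is the image under $(c_0,\dots,c_m)\mapsto c_m$ of the closed, hence compact, subset $\Gamma(\mathbf i)\cap(\{c\}\times C^m)$. In both cases a continuous image of a compact set is compact.

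The one combinatorial input I would isolate and reuse is the standard fact that if $\mathbf i$ is a \emph{reduced} word for $w\in W$, then two chambers $c,d$ can be joined by a gallery of type $\mathbf i$ precisely when $\delta(c,d)\le w$ in the Bruhat order. For the nontrivial direction one writes a reduced expression for $\delta(c,d)$ as a subword of $\mathbf i$ and pads a minimal gallery with stammering steps, each of which is automatically admissible since a chamber contains a simplex of every type. Combined with part~(i), this shows that $\{(c,d):\delta(c,d)\le w\}$ is compact, hence closed, for every $w\in W$.

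Part~(iii) then follows from~(ii). As $\Delta$ is spherical, $W$ is finite, so for a simplex $a$ of type $J$ the parabolic $W_{I\setminus J}$ is finite with a longest element $w^{J}_0$; I would fix a reduced word $\mathbf i_0$ for $w^{J}_0$ and a chamber $c\in\mathrm{Res}(a)$. A gallery step of type $i\in I\setminus J$ leaves the type-$J$ vertices, and hence $a$, fixed, so every chamber reachable from $c$ by a gallery of type $\mathbf i_0$ lies in $\mathrm{Res}(a)$; conversely any $d\in\mathrm{Res}(a)$ satisfies $\delta(c,d)\le w^{J}_0$ and is therefore reached by such a gallery. Thus the chambers of $\mathrm{Res}(a)$ are exactly those reachable from $c$ by galleries of type $\mathbf i_0$, a compact set by~(ii); panels are the special case $J=I\setminus\{j\}$.

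The heart of the argument, and the only real obstacle, is the openness asserted in~(iv). Here opposition means $\delta(c,d)=w_0$, the longest element of $W$. The key observation is the Bruhat-order identity $\{v\in W:v\ne w_0\}=\bigcup_{s\in I}\{v\in W:v\le w_0 s\}$: the inclusion $\supseteq$ is immediate since $\ell(w_0 s)=\ell(w_0)-1$, and the inclusion $\subseteq$ is exactly the lifting property of the Bruhat order (given $v<w_0$, choose $s$ with $vs>v$; then $v\le w_0 s$). Fixing for each $s\in I$ a reduced word for $w_0 s$, the combinatorial fact above shows that each $\{(c,d):\delta(c,d)\le w_0 s\}$ is closed, so the set of non-opposite pairs is a finite union of closed sets and hence closed; its complement, the set of opposite pairs, is therefore open, which is~(iv). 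Finally,~(v) follows from~(iv): for fixed $c$ the set of chambers opposite $c$ is the preimage of the open set of~(iv) under the continuous map $d\mapsto(c,d)$, hence open.
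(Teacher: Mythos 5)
Your proof is correct and follows essentially the same route as the paper: both arguments rest on the compactness of the set of (possibly stammering) galleries of a fixed type, obtain (i)--(iii) as continuous images of such sets, and prove (iv)--(v) by exhibiting the complement as a finite union of compact, hence closed, sets. The only differences are cosmetic: you spell out the subword/Bruhat-order bookkeeping that the paper handles by citation, and your union over $s\in I$ in (iv) is a slightly leaner indexing than the paper's implicit union over all $w\neq w_0$.
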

\begin{proof}
The sets given in (i) and (ii) are continuous images of compact sets of galleries
and therefore compact. Now (iii) is a special case of (ii), since the set
of chambers of a residue is precisely the set of all chambers of $\Delta$
that can be reached from a chamber in the residue via a (possibly stammering)
gallery whose type represents the longest word $w_0$ in the Coxeter group
of the residue by, for example, \cite[5.4]{weiss1}.
The complements of the sets given in
(iv) and (v) are finite unions of compact sets by (i) and (ii), and therefore closed.
\end{proof}
\begin{corollary}
\label{ResiduesAreCompact}
Every residue in a compact spherical building is in a natural way a
compact spherical building.
\end{corollary}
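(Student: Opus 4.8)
The plan is to give each residue the vertex topologies inherited from $\Delta$ and then verify the two requirements of Definition~\ref{abc91}: compactness of each vertex set and closedness of the chamber set inside their product. The whole argument is essentially a repackaging of Lemma~\ref{BasicLemmaOnCompactBuildings}(iii) together with the elementary fact that a continuous bijection from a compact space onto a Hausdorff space is a homeomorphism.

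First I would fix a simplex $a\in\Delta$ of type $J\subseteq I$ and recall that $\mathrm{Res}(a)$ is a building whose Coxeter system is $(W_{I\setminus J},I\setminus J)$. Since $W$ is finite, so is the subgroup $W_{I\setminus J}$, and hence $\mathrm{Res}(a)$ is again spherical. Writing $C$ for the chamber set of $\Delta$ and $C_a\subseteq C$ for the set of chambers containing $a$, these are precisely the chambers of $\mathrm{Res}(a)$, and by Lemma~\ref{BasicLemmaOnCompactBuildings}(iii) the set $C_a$ is a compact subspace of $C$.

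Next, for each $i\in I\setminus J$ let $\pi_i\colon C\to V_i$ be the projection sending a chamber to its vertex of type $i$; this is continuous because $C$ carries the subspace topology from $\prod_{i\in I}V_i$. The vertices of type $i$ of the residue are exactly those $v\in V_i$ with $a\cup\{v\}\in\Delta$, and this set coincides with $\pi_i(C_a)$: if $a\cup\{v\}$ is a simplex it lies in some chamber of $C_a$, and conversely every type-$i$ vertex of a chamber in $C_a$ gives such a simplex. I would therefore endow $V_i^{\mathrm{res}}:=\pi_i(C_a)$ with the subspace topology from $V_i$; as the continuous image of the compact set $C_a$ inside the Hausdorff space $V_i$, it is compact.

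Finally, I would view each chamber of $\mathrm{Res}(a)$ as the tuple of its type-$(I\setminus J)$ vertices and consider the map $\phi\colon C_a\to\prod_{i\in I\setminus J}V_i^{\mathrm{res}}$ given by $c\mapsto(\pi_i(c))_{i}$. This map is continuous, and it is injective because a chamber of $\Delta$ containing $a$ is determined by $a$ together with its remaining vertices. Since $C_a$ is compact and the target is Hausdorff, $\phi$ is a homeomorphism onto its image, which is consequently closed in $\prod_{i\in I\setminus J}V_i^{\mathrm{res}}$. This is exactly the condition of Definition~\ref{abc91}, so $\mathrm{Res}(a)$ is a compact spherical building; the fact that $\phi$ is a homeomorphism onto its image moreover shows that this compact structure induces on $C_a$ precisely the subspace topology from $C$, which is the sense in which the structure is natural. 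I expect no real obstacle here beyond making the identification of the residue's vertices with $\pi_i(C_a)$ precise.
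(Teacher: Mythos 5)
Your proposal is correct and follows essentially the same route as the paper: compactness of the residue's chamber set via Lemma~\ref{BasicLemmaOnCompactBuildings}(iii), compactness of the vertex sets as continuous images, and closedness of the chamber set in the product because a continuous injection of a compact space into a Hausdorff space has compact, hence closed, image. The only cosmetic difference is that you identify the residue's $i$-vertices with a subset of $V_i$ rather than with simplices of type $J\cup\{i\}$ sitting in $\prod_{j\in J\cup\{i\}}V_j$, which amounts to the same topology since $a$ is fixed.
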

\begin{proof}
The proof is mainly a matter of careful bookkeeping.
Let $a\in\Delta$ be a simplex of type $J\subseteq I$. Its residue $\mathrm{Res}(a)$ is 
by definition the poset of all simplices containing $a$. 
For $i\in I\setminus J$, its set of $i$-vertices $V_i(\mathrm{Res}(a))$ consists
of all simplices of type $J\cup\{i\}$ containing $a$.
Its chamber set is $C(\mathrm{Res}(a))=\mathrm{Res}(a)\cap C$. 
This set $C(\mathrm{Res}(a))$ is compact by 
\ref{BasicLemmaOnCompactBuildings}(iii). Since it projects continuously
onto $V_i(\mathrm{Res}(a))$, the latter set is also compact.
Moreover,  $V_i(\mathrm{Res}(a))\subseteq \prod_{j\in J\cup\{i\}}V_j$ is Hausdorff.
Consider the natural injection $C(\mathrm{Res}(a))\to\prod_{i\in I\setminus J}V_i(\mathrm{Res}(a))$.
This map is continuous, so its image is compact and therefore closed
in the product $\prod_{i\in I\setminus J}V_i(\mathrm{Res}(a))$.
\end{proof}
\begin{corollary}\label{CompactProductDecomposition}
Let $\Delta_1$ and $\Delta_2$ be spherical buildings and let
$\Delta=\Delta_1*\Delta_2$ denote their join, i.e., the product poset.
\begin{thmenum}
 \item If $\Delta_1$ and $\Delta_2$ are compact buildings, then
$\Delta$ is in a natural way a compact building.
 \item If $\Delta$ is a compact building, then $\Delta_1$ and
$\Delta_2$ are compact buildings and $\Delta$ is as in {\rm(i)} isomorphic
to their join.
\end{thmenum}
\end{corollary}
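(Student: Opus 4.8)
The plan is to exploit the product structure of chambers in a join, together with the compactness facts already established. Recall that the vertex set of $\Delta=\Delta_1*\Delta_2$ over $I=I_1\sqcup I_2$ is simply the disjoint union of the vertex sets of $\Delta_1$ and $\Delta_2$, and that a chamber of $\Delta$ is an $I$-tuple whose $I_1$-part is a chamber of $\Delta_1$ and whose $I_2$-part is a chamber of $\Delta_2$. Thus $C(\Delta)=C(\Delta_1)\times C(\Delta_2)$ as sets, compatibly with the inclusions $C(\Delta_j)\subseteq\prod_{i\in I_j}V_i$.

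For part~(i), I would give each $V_i$ ($i\in I$) the compact topology it carries as a vertex set of the appropriate factor. Then $C(\Delta_1)$ is closed in $\prod_{i\in I_1}V_i$ and $C(\Delta_2)$ is closed in $\prod_{i\in I_2}V_i$ by hypothesis, so $C(\Delta)=C(\Delta_1)\times C(\Delta_2)$ is closed in $\prod_{i\in I}V_i=\bigl(\prod_{i\in I_1}V_i\bigr)\times\bigl(\prod_{i\in I_2}V_i\bigr)$, since a product of closed sets is closed in the product space. This verifies Definition~\ref{abc91} directly, and identifies $\Delta$ with the join as a compact building.

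For part~(ii), I would run this argument in reverse, and here the only work is extracting the topologies on the factors. Since every vertex of $\Delta_j$ is a vertex of $\Delta$ of some type $i\in I_j$, each $V_i$ already carries the compact topology coming from $\Delta$, so $\prod_{i\in I_j}V_i$ is compact. It remains to check that $C(\Delta_j)$ is closed in $\prod_{i\in I_j}V_i$; this is where I would use Corollary~\ref{ResiduesAreCompact}, or rather the observation behind it: a chamber of $\Delta_1$ is the $I_1$-part of a chamber of $\Delta$, so fixing any chamber $d\in C(\Delta_2)$, the slice $C(\Delta_1)\times\{d\}$ is a closed subset of the closed set $C(\Delta)$, and projecting it to $\prod_{i\in I_1}V_i$ (a continuous map with compact domain, hence closed) shows $C(\Delta_1)$ is closed. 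The same argument with the roles reversed handles $\Delta_2$.

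I expect the proof to be essentially bookkeeping rather than containing a genuine obstacle; the main point requiring care is simply to confirm that the product identification $C(\Delta)=C(\Delta_1)\times C(\Delta_2)$ respects the ambient product-of-vertex-sets embedding, so that ``closed in the product'' transfers cleanly in both directions. Once that identification is made explicit, both implications are immediate from the fact that finite products of compact (respectively closed) sets are compact (respectively closed), and the isomorphism with the join in~(ii) is the tautological one.
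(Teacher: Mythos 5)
Your proposal is correct and follows essentially the same route as the paper: part~(i) is the observation that $C(\Delta_1)\times C(\Delta_2)$ is closed (equivalently compact) in the product of the ambient vertex-set products, and part~(ii) identifies each $\Delta_j$ with a residue of $\Delta$ — your slice $C(\Delta_1)\times\{d\}$ is exactly the chamber set of $\mathrm{Res}(d)$, so your argument is just Corollary~\ref{ResiduesAreCompact} unwound. No gaps.
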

\begin{proof}
Again, the proof is just a matter of bookkeeping.
The vertex set of $\Delta_1*\Delta_2$ is the disjoint union of the vertex
sets of $\Delta_1$ and $\Delta_2$. The chamber set of $\Delta$ is the
cartesian product of the chamber sets of $\Delta_1$ and $\Delta_2$. 
For (i) we note that we have compact Hausdorff topologies on the vertex
sets of $\Delta$ and the chamber set of $\Delta$ is a continuous image of a compact
set in their cartesian product. This proves (i). For (ii) we note that
$\Delta_1$ and $\Delta_2$ are residues in $\Delta$. By 
Corollary~\ref{ResiduesAreCompact}, both residues are compact buildings. It is
now easy to see that the various compact topologies on $\Delta_1*\Delta_2$
and $\Delta$ coincide, since there are natural continuous bijections between
them.
\end{proof}
\begin{lemma}
\label{GalleriesAreContinuous}
Let $\mathbf i=(i_1,\ldots,i_m)$ be a reduced expression of $i_1\cdots i_m=w\in W$,
i.e., $w$ cannot be expressed in a shorter way as a word in the generating set $I$.
The map which assigns to a pair of chambers $(c,d)$ at distance
$\delta(c,d)=w$ the unique gallery $(c_0=c,c_1,\ldots,c_m=d)$ of type $\mathbf i$
is continuous.
\end{lemma}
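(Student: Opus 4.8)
The plan is to verify continuity by a compactness (net/subnet) argument rather than by exhibiting a homeomorphism directly. Write $D_w=\{(c,d)\in C\times C:\delta(c,d)=w\}$ for the domain, endowed with the subspace topology from $C\times C$, and let $f\colon D_w\to C^{m+1}$ be the map $(c,d)\mapsto(c_0,\dots,c_m)$ sending a pair to its unique gallery of type $\mathbf i$. Two facts are available for free. First, since $C$ is compact Hausdorff, so is $C^{m+1}$, and the set $G_{\mathbf i}\subseteq C^{m+1}$ of all (possibly stammering) galleries of type $\mathbf i$ is closed, as noted just before Lemma~\ref{BasicLemmaOnCompactBuildings}. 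Second, because $\mathbf i$ is a reduced word for $w$, whenever $\delta(c,d)=w$ there is exactly one gallery of type $\mathbf i$ from $c$ to $d$, and it is automatically minimal: a stammering gallery of type $\mathbf i$ joins chambers at a distance represented by a word strictly shorter than $m=\ell(w)$, hence not equal to $w$. This uniqueness is exactly what makes $f$ well defined, and it is what I will exploit to pin down limits.

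Next I would check continuity of $f$ using nets, which is legitimate since we make no second-countability assumption on $C$. Let $(c^\alpha,d^\alpha)$ be a net in $D_w$ converging to $(c,d)\in D_w$, and let $g^\alpha=(c^\alpha_0,\dots,c^\alpha_m)=f(c^\alpha,d^\alpha)$ be the associated galleries; I must show $g^\alpha\to f(c,d)=:g$ in $C^{m+1}$. Because $C^{m+1}$ is compact, it suffices to show that every convergent subnet of $(g^\alpha)$ has limit $g$, since in a compact Hausdorff space a net converges to a point precisely when that point is its only cluster value. So let $g^\beta\to h=(h_0,\dots,h_m)$ be any convergent subnet. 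Since $G_{\mathbf i}$ is closed, $h\in G_{\mathbf i}$, i.e.\ $h$ is a gallery of type $\mathbf i$; and evaluating the continuous endpoint projections gives $h_0=\lim c^\beta_0=c$ and $h_m=\lim c^\beta_m=d$. Thus $h$ is a gallery of type $\mathbf i$ from $c$ to $d$ with $\delta(c,d)=w$, so by the uniqueness recalled above $h=g$. Hence $g$ is the only cluster value of $(g^\alpha)$, so $g^\alpha\to g$, and $f$ is continuous.

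The step I expect to be the genuine obstacle is not any single estimate but the structural subtlety that forces the net argument in the first place. One is tempted to let $\mathcal M\subseteq G_{\mathbf i}$ be the set of minimal galleries and argue that the endpoint map $\mathcal M\to D_w$ is a continuous bijection from a compact space onto a Hausdorff space, hence a homeomorphism. This fails because $\mathcal M$ is only open in $G_{\mathbf i}$, not closed: a net of minimal galleries of type $\mathbf i$ can converge to a stammering gallery once its endpoints converge to a pair at distance strictly less than $w$, so neither $\mathcal M$ nor $D_w$ need be compact. The net formulation sidesteps this precisely because we only ever take limits of endpoints lying in $D_w$, where the hypothesis $\delta(c,d)=w$ rules out the degeneration and the uniqueness fact identifies the limit gallery unambiguously. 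The one point requiring care inside the argument is confirming that the subnet limit $h$, a priori possibly stammering, is in fact the sought gallery; this is handled cleanly by invoking uniqueness rather than by trying to propagate the non-stammering condition along the net.
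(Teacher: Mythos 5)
Your proof is correct and is essentially the paper's argument: the paper invokes the criterion that a map into a compact space is continuous iff its graph is closed, and your net/subnet argument is precisely the proof of that criterion, using the same two ingredients (closedness of the set of possibly stammering galleries of type $\mathbf i$ in $C^{m+1}$, and uniqueness of the type-$\mathbf i$ gallery between chambers at distance $w$). If anything, you are more careful than the paper at the one delicate point: the paper asserts that the graph is ``clearly'' closed in $C^2\times C^{m+1}$, whereas (as your discussion of degenerating minimal galleries shows) it is only closed relative to $D_w\times C^{m+1}$, which is all the criterion requires and exactly what your uniqueness step establishes.
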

\begin{proof}
We use the fact that a map from a topological space $X$ to
a compact space $Y$ is continuous if and only if its graph is closed;
see \cite[XI.2.7]{dug}. In our situation $X=\{(a,b)\in C\times C\mid\delta(a,b)=w\}$
and $Y=C^{m+1}$. The graph of the map in question is the set of all
$(m+3)$-tuples $(c_0,c_m,c_0,\ldots,c_m)$, where $c_0,\ldots,c_m$
is a nonstammering gallery of type $\mathbf i$. Clearly, this set is closed in
$C^2\times C^{m+1}$.
\end{proof}
\begin{corollary}
\label{projContinuous}
Let $i\in I$ and $w\in W$ and suppose that ${\it l}(wi)<{\it l}(w)$, i.e., $w$ admits
a reduced expression that ends with the letter $i$. For any chamber $c\in C$,
let $p^i(c)$ denote the unique $i$-panel containing $c$.
Then the map that assigns to a pair of chambers $(c,d)$ at distance 
$\delta(c,d)=w$ the unique chamber $e=\mathrm{proj}_{p^i(d)}(c)$ with
$\delta(e,d)=i$ and $\delta(c,e)=wi$ is continuous.
\end{corollary}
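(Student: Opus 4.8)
The plan is to reduce the statement to Lemma~\ref{GalleriesAreContinuous} by identifying the projection chamber $e$ with one fixed coordinate of a suitably chosen minimal gallery, and then to invoke continuity of coordinate projections.

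First I would choose a reduced expression adapted to $i$. Since ${\it l}(wi)<{\it l}(w)$, a standard fact about Coxeter groups guarantees that $w$ has a reduced expression $\mathbf i=(i_1,\ldots,i_m)$ ending in the letter $i$, i.e. with $i_m=i$ and $m={\it l}(w)$. Dropping the final letter and using $i=i^{-1}$ gives a reduced expression for $i_1\cdots i_{m-1}=wi$, so ${\it l}(wi)=m-1$.

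Next, for a pair $(c,d)$ with $\delta(c,d)=w$, Lemma~\ref{GalleriesAreContinuous} supplies the unique gallery $(c_0=c,c_1,\ldots,c_m=d)$ of type $\mathbf i$, and the assignment $(c,d)\mapsto(c_0,\ldots,c_m)$ is continuous. I claim that the penultimate chamber $c_{m-1}$ is exactly $e=\mathrm{proj}_{p^i(d)}(c)$. Indeed, since $(c_0,\ldots,c_m)$ is a minimal gallery of type $\mathbf i$, its initial segment $(c_0,\ldots,c_{m-1})$ is a minimal gallery of the reduced type $(i_1,\ldots,i_{m-1})$, whence $\delta(c,c_{m-1})=i_1\cdots i_{m-1}=wi$; and the last step has type $i_m=i$, so $c_{m-1}$ lies in the $i$-panel $p^i(d)=p^i(c_m)$ and $\delta(c_{m-1},d)=i$. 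These are precisely the two distance conditions characterizing $e$ in the statement, so $e=c_{m-1}$.

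Finally, I would compose the continuous gallery map of Lemma~\ref{GalleriesAreContinuous} with the (continuous) projection $C^{m+1}\to C$ onto the $(m-1)$-st coordinate; this yields the map $(c,d)\mapsto e$ and proves its continuity. I do not anticipate a genuine obstacle here: the only point requiring care is the purely combinatorial identification $e=c_{m-1}$, which rests on the gate property of panels (the projection is the unique chamber of $p^i(d)$ nearest $c$) together with the hypothesis ${\it l}(wi)<{\it l}(w)$, which ensures $d\neq e$ and that the minimal distance from $c$ to the panel equals ${\it l}(w)-1$.
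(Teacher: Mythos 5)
Your proposal is correct and matches the paper's intent exactly: the corollary is stated without proof immediately after Lemma~\ref{GalleriesAreContinuous} precisely because $e$ is the penultimate chamber of the unique gallery of a reduced type ending in $i$, so continuity follows by composing the gallery map with a coordinate projection. Your combinatorial identification of $e=c_{m-1}$ via the gate property is the right (and only) point needing verification, and you handle it correctly.
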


\begin{plain}\textbf{Schubert cells.}\label{pdf50}
Let $w\in W$ and let $c_0\in C$ be a fixed chamber. We call the
set $C_w(c_0)=\{c\in C;\ \delta(c_0,c)=w\}$ a {\em Schubert cell.}
If $w_0$ is the longest element in $W$, then $C_{w_0}(c_0)$ is sometimes
called the \emph{big cell}. The big cell is just the set of all chambers
opposite $c_0$. By Lemma~\ref{BasicLemmaOnCompactBuildings}(v), this
set is open.
\end{plain}
\begin{proposition}
\label{Coordinates}
Let $w\in W$ be an element of word length ${\it l}(w)=m\geq 1$. Then
the Schubert cell $C_w(c_0)$ is homeomorphic to a product of $m$ punctured
panels, where `punctured panel' means a panel with one chamber removed.
\end{proposition}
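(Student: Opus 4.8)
The plan is to induct on $m={\it l}(w)$, using a fixed reduced expression $\mathbf i=(i_1,\dots,i_m)$ of $w$ to coordinatize $C_w(c_0)$ by minimal galleries. Since $\mathbf i$ is reduced, a chamber $c$ satisfies $\delta(c_0,c)=w$ precisely when it is the endpoint of a (necessarily unique) minimal gallery $(c_0,c_1,\dots,c_m=c)$ of type $\mathbf i$; at each step $c_{k-1}=\mathrm{proj}_{p^{i_k}(c_{k-1})}(c_0)$ is the gate of the panel $p^{i_k}(c_{k-1})$ towards $c_0$, and $c_k$ may be taken to be any of the remaining chambers of that panel. This already gives a set-theoretic bijection between $C_w(c_0)$ and an iterated product of punctured panels, and the real task is to promote it to a homeomorphism onto an \emph{honest} product. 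For $m=1$ there is nothing to do: $C_i(c_0)=p^i(c_0)\setminus\{c_0\}$ is literally a punctured panel.

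For the inductive step I would peel off the last letter, writing $w=w'i$ with $i=i_m$ and ${\it l}(w')=m-1$. Applying Corollary~\ref{projContinuous} to the pair $(c_0,\gamma)$ yields a continuous map $\pi\colon C_w(c_0)\to C_{w'}(c_0)$, $\gamma\mapsto e:=\mathrm{proj}_{p^{i}(\gamma)}(c_0)$. The gate property shows that $e$ is the unique chamber of the panel $P:=p^{i}(\gamma)$ at distance $w'$ from $c_0$, while every other chamber of $P$ is at distance $w$; hence $e\in C_{w'}(c_0)$, and since $P=p^i(e)$ the fiber $\pi^{-1}(e)$ is exactly the punctured panel $p^i(e)\setminus\{e\}$. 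By the induction hypothesis $C_{w'}(c_0)$ is homeomorphic to a product of $m-1$ punctured panels, so it suffices to trivialize $\pi$, i.e.\ to produce a homeomorphism $C_w(c_0)\cong C_{w'}(c_0)\times Q$ for a fixed punctured panel $Q$.

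To trivialize $\pi$ I would fix a reference chamber $e_0\in C_{w'}(c_0)$, set $Q=p^{i}(e_0)\setminus\{e_0\}$, and transport each fiber $p^i(e)\setminus\{e\}$ onto $Q$ by a projection between the two $i$-panels that carries the gate $e$ to the gate $e_0$. Panels are compact by Lemma~\ref{BasicLemmaOnCompactBuildings}(iii), so any such transport, being a continuous bijection between compact Hausdorff panels, is automatically a homeomorphism; and Lemma~\ref{GalleriesAreContinuous} should guarantee that it varies continuously with $\gamma$. Combining $\pi$ with this transport then gives a continuous bijection $C_w(c_0)\to C_{w'}(c_0)\times Q$, whose inverse is continuous because the entire gallery, and in particular $\gamma=c_m$, depends continuously on its coordinates, again by Lemma~\ref{GalleriesAreContinuous}. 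Feeding this back into the induction yields the desired homeomorphism onto a product of $m$ punctured panels.

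The hard part will be exactly this trivialization, i.e.\ upgrading the fiberwise description of $\pi$ to a global product. The delicate point is to check that the chosen projection genuinely restricts to a \emph{bijection} between the varying panel $p^i(e)$ and the fixed panel $p^i(e_0)$ for every $e\in C_{w'}(c_0)$, so that the transport is everywhere defined and gate-preserving, and that these maps assemble continuously; this is where the geometry of projections between panels, rather than mere point-set topology, must be used, and if naive projection fails to be bijective one must replace it by a parallelism-based identification. Once continuity of both the transport and its inverse is secured via Lemma~\ref{GalleriesAreContinuous} together with the compactness of panels, the remaining work—associativity of the iterated identifications and the bookkeeping of types—is routine.
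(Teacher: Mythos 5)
Your skeleton is right — induction on $m$, peeling off a final letter $i$ of a reduced word, the continuous fibration $\pi\colon C_w(c_0)\to C_{wi}(c_0)$ with punctured panels as fibers — but the step you yourself flag as ``the hard part'' is exactly where the proof lives, and the mechanism you propose for it would fail. You want to transport the fiber $p^i(e)\setminus\{e\}$ onto a fixed $Q=p^i(e_0)\setminus\{e_0\}$ by ``a projection between the two $i$-panels that carries the gate $e$ to the gate $e_0$.'' But for two chambers $e,e_0$ in the same Schubert cell $C_{wi}(c_0)$, the panels $p^i(e)$ and $p^i(e_0)$ are in general not opposite (nor parallel), and the projection of one onto the other is then not a bijection and has no reason to send $e$ to $e_0$; already in a generalized polygon two panels of the same type at distance less than the diameter project onto each other non-bijectively. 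So the transport is not ``everywhere defined and gate-preserving,'' and no amount of compactness or of Lemma~\ref{GalleriesAreContinuous} repairs a map that is not a bijection fiberwise. You anticipate this (``replace it by a parallelism-based identification'') but supply no such identification, and that identification is the entire content of the inductive step.

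The paper's resolution is to identify every fiber with a single \emph{external} punctured panel chosen to be opposite all of them at once: with $w_0$ the longest element and $j=w_0iw_0$, fix $d\in C_{ww_0}(c_0)$, so that $\delta(d,c)=w_0$ for all $c\in C_w(c_0)$; then by \cite[5.13]{weiss1} the panel $p^j(d)$ is opposite $p^i(a)$ for \emph{every} $a\in C_{wi}(c_0)$, and by \cite[5.14(i)]{weiss1} the mutual projections $\mathrm{proj}_{p^i(a)}|_{p^j(d)}$ and $\mathrm{proj}_{p^j(d)}|_{p^i(a)}$ are inverse bijections. This gives explicit mutually inverse maps $C_{wi}(c_0)\times C_j(d)\to C_w(c_0)$, $(a,b)\mapsto\mathrm{proj}_{p^i(a)}(b)$, and $c\mapsto(\mathrm{proj}_{p^i(c)}(c_0),\mathrm{proj}_{p^j(d)}(c))$, each continuous by Corollary~\ref{projContinuous}, so continuity of the inverse comes for free rather than from a compactness argument. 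Your fixed factor $Q$ should therefore be the punctured panel $C_j(d)=p^j(d)\setminus\{d\}$, not a panel sitting over some $e_0\in C_{wi}(c_0)$. Without this (or an equivalent) choice of a simultaneously opposite reference panel, the trivialization is not established and the induction does not close.
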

\begin{proof}
We proceed by induction on $m$. For each $i\in I$, we have 
$C_i(c_0)=p^i(c_0)\setminus\{c_0\}$.
Thus the claim holds for $m=1$.
Suppose now that $m>1$ and choose $i$ such that ${\it l}(wi)<{\it l}(w)$.
Let $w_0$ be as in \ref{pdf50} and let $j=w_0iw_0\in I$, put $v=ww_0$, so $w_0=v^{-1}w$, 
and fix $d\in C_v(c_0)$,
so $\delta(d,c)=v^{-1}w=w_0$ for all $c\in C_w(c_0)$.
Then the panels $p^i(a)$ and $p^j(d)$  are opposite for each 
$a\in C_{wi}(c_0)$ (by \cite[5.13]{weiss1}).
By \cite[5.14(i)]{weiss1},
$\mathrm{proj}_{p^i(a)}$ restricted to $p^j(d)$ and
$\mathrm{proj}_{p^j(d)}$ restricted to $p^i(a)$ are inverses of 
each other for each $a\in C_{wi}(c_0)$. Thus 
the maps $C_{wi}(c_0)\times C_j(d) \to C_w(c_0)$
sending $(a,b)$ to $c=\mathrm{proj}_{p^i(a)}(b)$ and 
$C_w(c_0)\to C_{wi}(c_0)\times C_j(d)$ 
sending $c$ to $(a,b)=(\mathrm{proj}_{p^i(c)}(c_0),\mathrm{proj}_{p^j(d)}(c))$
are inverses of each other. By Corollary~\ref{projContinuous}, both these maps are continuous.
By induction, therefore, $C_{wi}(c_0)$ is homeomorphic to a product of punctured panels.
\end{proof}
\begin{corollary}\label{pop1}
The topology on $\Delta$ is uniquely determined by its restriction
to the panels of $\Delta$.
\end{corollary}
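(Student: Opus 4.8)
The plan is to show that if two compact building topologies $\mathcal{T}_1$ and $\mathcal{T}_2$ on $\Delta$ induce the same subspace topology on every panel, then $\mathcal{T}_1=\mathcal{T}_2$. By Definition~\ref{abc91}, a compact topology on $\Delta$ is the same datum as a family of compact topologies on the vertex sets $V_i$ for which the chamber set $C$ is closed in $\prod_i V_i$. Each coordinate projection $\pi_i\colon C\to V_i$ is a continuous surjection of a compact space onto a Hausdorff space, hence a closed map and therefore a quotient map; so the topology on $V_i$ is recovered as the quotient topology of the topology on $C$. It thus suffices to prove that the topology on $C$ is determined by the topologies on the panels.

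First I would cover $C$ by big cells. Fix a chamber $c$; since $\Delta$ is a thick spherical building, $c$ has an opposite chamber $c_0$, i.e.\ $\delta(c,c_0)=w_0$, and as opposition is symmetric ($\delta(c_0,c)=w_0^{-1}=w_0$) we have $c\in C_{w_0}(c_0)$. Hence the big cells $C_{w_0}(c_0)$, as $c_0$ ranges over $C$, cover $C$, and by Lemma~\ref{BasicLemmaOnCompactBuildings}(v) each is open in $C$. For any topology, a subset of $C$ is open precisely when its intersection with each member of an open cover is open in that member; so the topology on $C$ is determined by its restrictions to the big cells.

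Next I would apply Proposition~\ref{Coordinates} with $w=w_0$: the big cell $C_{w_0}(c_0)$ is homeomorphic to a product of ${\it l}(w_0)$ punctured panels. The crucial point, and the step I expect to require the most care, is that the bijection produced in the proof of Proposition~\ref{Coordinates} is built entirely from the combinatorial projection maps $\mathrm{proj}_{p^i(\cdot)}$, which do not depend on the topology; it is therefore the \emph{same} set-theoretic map $C_{w_0}(c_0)\to\prod_k\bigl(P_k\setminus\{*_k\}\bigr)$ for both $\mathcal{T}_1$ and $\mathcal{T}_2$, where each $P_k$ is a panel and $*_k$ the removed chamber. Under each $\mathcal{T}_j$ this common map is a homeomorphism onto the product carrying the subspace topologies of the panels. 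Since $\mathcal{T}_1$ and $\mathcal{T}_2$ agree on panels, they agree on punctured panels and hence on the product; pulling back along the common bijection, $\mathcal{T}_1$ and $\mathcal{T}_2$ restrict to the same topology on each big cell.

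Combining these steps, $\mathcal{T}_1$ and $\mathcal{T}_2$ induce the same topology on $C$, and by the quotient argument of the first paragraph they then induce the same topology on each $V_i$, so $\mathcal{T}_1=\mathcal{T}_2$. The only genuinely delicate point is verifying that the homeomorphism of Proposition~\ref{Coordinates} is canonical in the sense above, so that agreement on panels propagates to agreement on big cells; the remaining ingredients are the standard facts that a topology is determined by its traces on an open cover and that a continuous surjection of a compact space onto a Hausdorff space is a quotient map.
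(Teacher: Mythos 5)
Your proposal is correct and follows essentially the same route as the paper: Proposition~\ref{Coordinates} determines the topology on the (open) big cells from the panels, these cover $C$, and the vertex sets are recovered as quotients of $C$. The only difference is that you make explicit the (correct and worthwhile) observation that the homeomorphism of Proposition~\ref{Coordinates} is a canonical, topology-independent bijection built from the combinatorial projection maps, a point the paper leaves implicit.
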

\begin{proof}
By Proposition~\ref{Coordinates}, the topology on the panels determines
the topologies on the Schubert cells. In particular, it determines
the topologies on the big cells. Since these are open,
this determines the topology on $C$. The vertex sets $V_i$ are topologically
quotients of $C$ and thus the topology on $V_i$ is also determined
by the topologies of the panels.
\end{proof}
\begin{proposition}\label{pop2}
If $\Delta$ is an irreducible compact spherical building of rank
at least~$2$, then either $C$ is connected, or
$C$ is totally disconnected, and, in fact, homeomorphic
to the Cantor set.
\end{proposition}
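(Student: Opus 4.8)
The plan is to reduce the topological dichotomy for the chamber set $C$ to a dichotomy for a single panel, and then to propagate the latter back up to $C$. Since $\Delta$ is irreducible of rank at least $2$, its Coxeter diagram is connected, so every type $i$ occurs in a rank-$2$ residue of type $\{i,j\}$ with $\mathrm{ord}(ij)\ge 3$, which by Corollary~\ref{ResiduesAreCompact} is a compact thick generalized polygon. Inside such a residue the projection maps of Corollary~\ref{projContinuous} restrict to bijections between panels (perspectivities); being continuous bijections of compact Hausdorff spaces they are homeomorphisms, so within one residue all panels of both types are homeomorphic, and running along the connected diagram I would conclude that \emph{all} panels of $\Delta$ are homeomorphic to one fixed compact space $T$. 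The target is to show that $T$ is either connected or homeomorphic to the Cantor set, for which I would use Brouwer's characterization of the latter as the unique nonempty compact, metrizable, perfect, totally disconnected space.

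The heart of the matter is the dichotomy for $T$, and this is the step I expect to be the main obstacle. Let $\Pi$ be the group of projectivities of $T$, i.e.\ the group generated by the perspectivities above; by the previous paragraph $\Pi$ acts on $T$ by homeomorphisms. The key input is the classical fact from the theory of generalized polygons that, for a thick polygon, $\Pi$ acts $2$-transitively on each panel (this is exactly where thickness and rank at least $2$ enter, and where I would cite the projectivity-group theory). Granting this, the dichotomy is immediate: let $a\in T$ and let $Q$ be the connected component of $a$. If $Q$ were neither $\{a\}$ nor all of $T$, I could pick $b\in Q\setminus\{a\}$ and $c\in T\setminus Q$ and, by $2$-transitivity, a $g\in\Pi$ with $g(a)=a$ and $g(b)=c$; since $g$ is a homeomorphism fixing $a$ it preserves $Q$, forcing $c=g(b)\in Q$, a contradiction. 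Hence every component is a point or all of $T$, so $T$ is either totally disconnected or connected. Transitivity of $\Pi$ also shows that an isolated point of $T$ would make $T$ discrete, hence finite, so when $\Delta$ is infinite the totally disconnected alternative yields a \emph{perfect} $T$.

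With the panel dichotomy in hand I would transport it to $C$. If $T$ is connected, then each panel is a connected subspace of $C$ (its chamber set is compact by Lemma~\ref{BasicLemmaOnCompactBuildings}(iii) and carries the topology of $T$); two adjacent chambers lie in a common panel, so following a gallery between any two chambers exhibits them in a connected union of successively overlapping panels, and as the chamber graph is connected, $C$ is connected. If instead $T$ is totally disconnected, I would use Proposition~\ref{Coordinates}: each big cell $B=C_{w_0}(c_0)$ is homeomorphic to a finite product of punctured copies of $T$, hence is a zero-dimensional, locally compact Hausdorff space, and it is open in $C$ by Lemma~\ref{BasicLemmaOnCompactBuildings}(v). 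Given distinct chambers $c,c'$, choose $c_0$ opposite $c$, so that $c\in B$; since $B$ is zero-dimensional there is a compact open neighbourhood $W$ of $c$ in $B$ with $c'\notin W$. Then $W$ is open in $C$ (as $B$ is) and compact, hence closed in $C$, so $W$ is a clopen set separating $c$ from $c'$, and therefore $C$ is totally disconnected.

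Finally, in the totally disconnected case I would assemble the Cantor-set conclusion from Brouwer's criterion: $C$ is compact by hypothesis, totally disconnected by the previous paragraph, and perfect, since an isolated chamber would force, through the local product structure of a big cell, an isolated point in some panel, contradicting perfectness of $T$ in the infinite case. The remaining property, metrizability, amounts to second countability of $C$; this reduces through $C\subseteq\prod_i V_i$ and Corollary~\ref{pop1} to second countability of the panels, which is the one genuinely extra ingredient and holds in the locally finite situations at which the paper is aimed (there the panels are profinite with countably many finite quotients). Granting it, Brouwer's theorem identifies $C$ with the Cantor set. To reiterate, the essential difficulty is the $2$-transitivity of the projectivity group feeding the component argument for $T$; everything else is bookkeeping with the continuity and product results already established.
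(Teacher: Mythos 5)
Your overall route---establish a connected/totally-disconnected dichotomy at the level of panels, propagate it to $C$ through the Schubert-cell decomposition of Proposition~\ref{Coordinates} and the openness of big cells, and finish with Brouwer's characterization of the Cantor set---is the same as the paper's, which compresses the panel step into a citation of \cite[2.2.3]{kramer-diss} and the final step into a citation of \cite[2-98]{hocking}. Your per-panel dichotomy via $2$-transitivity of the projectivity group (Knarr's theorem), and your transfer of the dichotomy from the panels to $C$ in both alternatives, are sound.

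The genuine gap is the assertion that ``within one residue all panels of both types are homeomorphic.'' Perspectivities are the projection maps between \emph{opposite} panels, and in a generalized $n$-gon opposite panels have the \emph{same} type when $n$ is even; since the relevant cases include $n=4,6,8$, your perspectivities identify panels of one type with each other but never with panels of the other type. The claim is in fact false: there are compact connected quadrangles whose point rows and line pencils are spheres of different dimensions, and locally finite buildings have quadrangle residues with parameters $(q,q^2)$. Consequently your argument does not exclude the mixed case in which all $i$-panels are connected while all $j$-panels are totally disconnected; in that case a big cell would be a product of a nontrivial connected space with a nontrivial totally disconnected one, and $C$ would be neither connected nor totally disconnected, so the statement would fail. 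Ruling out this mixed case is precisely the content of \cite[2.2.3]{kramer-diss}, which is a statement about the compact polygon as a whole and not about a single panel, and it requires a separate geometric argument that does not follow from the action of the projectivity group on one panel. A smaller point: the metrizability needed for Brouwer's criterion is not special to the locally finite situation; it holds for every compact polygon by \cite[Thm.~1.5]{knarr} and is recorded in Proposition~\ref{CompactBuildingsMetrizable}, so you should invoke that rather than restrict the scope of the proposition.
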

\begin{proof}
Suppose that $C$ is not connected. Then there is some panel $R$ which is
not connected. By \cite[2.2.3]{kramer-diss}, $R$ is totally disconnected
and without isolated points.
If $S$ is any other panel in $\Delta$, then we find a sequence
of panels between $R$ and $S$ such that consecutive panels are
contained in irreducible rank~2 residues. Therefore, $S$
is also totally disconnected without isolated points by \cite[2.2.3]{kramer-diss}.
It follows that the big cells in $C$ are totally disconnected and
without isolated points,
and thus $C$ itself is totally disconnected.
Moreover, no point in $C$ is isolated, so the result follows
from \cite[2-98]{hocking}.
\end{proof}
We remark that the vertex sets $V_i$ are also either connected or totally disconnected,
depending on $C$. This can be proved in a similar way as in \cite[2.5.2]{kramer-diss}.
\begin{proposition}\label{CompactBuildingsMetrizable}
Let $\Delta$ be a compact building without factors of rank~$1$.
Then the topology on the vertices and chambers is separable
and metrizable.
\end{proposition}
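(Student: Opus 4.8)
The plan is to reduce the statement to the single assertion that every panel of $\Delta$ is second countable, and then to settle that assertion using the dichotomy of Proposition~\ref{pop2}. Throughout I use the standard fact that for a compact Hausdorff space the properties \emph{metrizable}, \emph{second countable}, and \emph{separable and metrizable} coincide; thus it suffices to prove that the chamber set $C$ and each vertex set $V_i$ is second countable. Since $V_i$ is a continuous image of the compact space $C$ (the projection sending a chamber to its vertex of type $i$), and a Hausdorff continuous image of a compact metrizable space is again metrizable, it is enough to show that $C$ is second countable. Finally, by Corollary~\ref{CompactProductDecomposition} a compact building is the join of its irreducible factors, each of which is again a compact building, and a finite product of second countable spaces is second countable; so I may assume that $\Delta$ is irreducible, and, since $\Delta$ has no factor of rank~$1$, of rank at least~$2$.

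Next I would reduce from $C$ to the panels. By Lemma~\ref{BasicLemmaOnCompactBuildings}(v) each big cell $C_{w_0}(c_0)$ is open in $C$, and as $c_0$ ranges over $C$ these big cells cover $C$, because in a thick spherical building every chamber has an opposite. By compactness finitely many of them cover $C$. By Proposition~\ref{Coordinates} each big cell is homeomorphic to a finite product of punctured panels. Since a finite product of second countable spaces is second countable, and an open subspace (as well as a finite union) of a second countable space is second countable, the second countability of $C$ follows once we know that every panel of $\Delta$ is second countable.

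To treat the panels I would invoke Proposition~\ref{pop2}: for our irreducible factor, either $C$ is totally disconnected and homeomorphic to the Cantor set, or $C$ is connected. In the first case every panel is a closed subspace of the Cantor set and is therefore metrizable and second countable, which finishes this case. In the connected case every panel is itself compact and connected (by the argument used to prove Proposition~\ref{pop2}), and here second countability is no longer a formal consequence of compactness, since compact connected Hausdorff spaces need not be metrizable. Instead it has to be extracted from the geometry of compact connected generalized polygons: every panel of $\Delta$ occurs as a panel of a rank~$2$ residue, which by Corollary~\ref{ResiduesAreCompact} is a compact connected building of rank~$2$, and for such buildings the metrizability (hence second countability) of the panels is established in \cite{burns} and \cite{kramer-diss}.

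The main obstacle is exactly this connected case: everything up to and including the reduction to panels is soft point-set topology together with the combinatorial results already proved, but the passage from \emph{compact and connected} to \emph{second countable} for a panel genuinely uses the structure theory of compact connected polygons rather than general topology. Once second countability of $C$ is in hand, $C$ is compact and second countable, hence metrizable; each $V_i$ is then metrizable as above; and separability is automatic because compact metrizable spaces are separable.
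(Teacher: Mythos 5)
Your overall reduction is the same as the paper's: Corollary~\ref{CompactProductDecomposition} reduces the statement to the irreducible case, the open big cells (Lemma~\ref{BasicLemmaOnCompactBuildings}(v)) together with Proposition~\ref{Coordinates} reduce second countability of $C$ to second countability of the panels, and the quotient maps $C\to V_i$ dispose of the vertex sets. The paper then finishes the panel step in one stroke, with no case distinction: every panel lies in an irreducible rank~$2$ residue, which is a compact polygon by Corollary~\ref{ResiduesAreCompact}, and the metrizability (hence second countability) of compact polygons --- connected or not --- is \cite[Thm.~1.5]{knarr}.

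The place where you diverge is the panel step, and your totally disconnected case is the one to worry about. You invoke Proposition~\ref{pop2} to conclude that $C$ is homeomorphic to the Cantor set and then deduce that panels, being closed in $C$, are second countable. But the Cantor-set conclusion of Proposition~\ref{pop2} is obtained from the Hocking--Young characterization of the Cantor set among compact, perfect, totally disconnected \emph{metric} spaces \cite[2-98]{hocking}, so that proposition already presupposes --- or at least must be read as resting on --- the metrizability you are in the middle of proving. Routing your argument through it therefore creates a latent circularity, unless you first unwind \cite[2.2.3]{kramer-diss} to see that the rank-$2$ theory supplies metrizability independently; and it is in any case redundant, since if $C$ were already known to be a Cantor set it would be second countable outright and the whole reduction to panels would be superfluous. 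The repair is exactly what the paper does: apply the rank-$2$ metrizability theorem \cite[Thm.~1.5]{knarr} to an irreducible rank-$2$ residue containing the given panel, uniformly in both cases. Your connected case is essentially this already, with \cite{burns} and \cite{kramer-diss} cited in place of \cite{knarr}; the rest of your argument (big cells, finite covers, quotients to $V_i$, compact metrizable implies separable) is sound and matches the paper.
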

\begin{proof}
In view of Corollary~\ref{CompactProductDecomposition} it suffices to consider the
case when $\Delta$ is irreducible and of rank $m\geq 2$. For
$m=2$, this is proved in \cite[Thm.~1.5]{knarr}. Suppose now that $m\geq 3$.
Every panel of $\Delta$ is then contained in some irreducible residue
of rank~2 and therefore second countable by \cite[Thm.~1.5]{knarr}.
It follows from
Proposition~\ref{Coordinates} that every big cell in $C$ is second countable. Since $C$ can
be covered by $|W|$ big cells (where $|W|$ here denotes the cardinality of $W$), 
and since the big cells are open by
Lemma~\ref{BasicLemmaOnCompactBuildings}(v), $C$ itself is second countable.
It follows that $C$ is separable and metrizable \cite[VIII.7.3, XI.4.1]{dug},
\cite[4.2.8]{eng}. It follows from \cite[4.4.15]{eng} that $V_i$ is also metrizable
and separable \cite[VIII7.2]{dug}.
\end{proof}

\begin{theorem}\label{AutIsMetrizable}
Let $\Delta$ be a compact building without factors of rank~$1$.
Then the group $\mathrm{Auttop}(\Delta)$ of all continuous automorphisms of
$\Delta$, endowed with the compact-open topology, is a
second countable locally compact metrizable group. The group of all
continuous special automorphisms is an open normal subgroup.
\end{theorem}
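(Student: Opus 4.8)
The plan is to realise $G:=\mathrm{Auttop}(\Delta)$ as a subgroup of the homeomorphism group of the compact metrisable chamber set $C$ and to extract local compactness from an Arzel\`a--Ascoli argument. By Proposition~\ref{CompactBuildingsMetrizable} the set $C$ is compact, separable and metrisable; fix a compatible metric $d$. Since an automorphism of $\Delta$ is determined by its action on chambers, restriction to $C$ gives an injection $G\hookrightarrow\mathrm{Homeo}(C)$, and because $C$ is compact the compact--open topology on $G$ is the topology of uniform convergence for $d$ and makes $G$ a topological group (continuity of inversion being automatic on a compact domain, exactly as in the proof of Theorem~\ref{IsometriesOfProperSpace}). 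The space $C(C,C)$ of continuous self--maps of the compact separable metric space $C$ is itself separable and metrisable, so its subspace $G$ is second countable and metrisable, which already gives two of the three assertions about the topology of $G$.

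For local compactness I would produce a \emph{symmetric} neighbourhood $N=N^{-1}$ of the identity in $G$ that is \emph{equicontinuous} as a family of self--maps of $C$. Arzel\`a--Ascoli (cited in the proof of Theorem~\ref{IsometriesOfProperSpace}) then makes the closure $\overline N$ compact in $C(C,C)$. Every element of $\overline N$ preserves the $W$--valued distance $\delta$, since for each $w\in W$ the set $\{(c,c')\mid\delta(c,c')=w\}$ is closed (its galleries form compact sets, cf.\ Lemmas~\ref{BasicLemmaOnCompactBuildings} and~\ref{GalleriesAreContinuous}) and this condition survives uniform limits; and each element of $\overline N$ is a homeomorphism because the relation $N=N^{-1}$ lets one produce its inverse as a limit of the inverses. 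Hence $\overline N\subseteq G$, and $G$ has a compact neighbourhood of the identity.

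It remains to construct the equicontinuous neighbourhood $N$, which is the crux of the whole proof. Here I would coordinatise $C$ by the finitely many Schubert cells $C_w(c_0)$ based at a fixed chamber $c_0$. By Proposition~\ref{Coordinates} each cell is, through a fixed homeomorphism assembled from the projection maps of Corollary~\ref{projContinuous}, a product of finitely many punctured panels, and on such a cell the action of $g$ is the conjugate by this homeomorphism of the product of the actions of $g$ on the finitely many coordinate panels. Because the coordinate homeomorphism is uniformly continuous on a compact domain, equicontinuity of $N$ on all of $C$ reduces to equicontinuity of the induced local actions on a single panel. This last point is the genuine obstacle: I expect to handle it by passing to the irreducible rank~$2$ residues containing the panel and invoking the compact generalised polygon results of Knarr \cite{knarr} already used in Proposition~\ref{CompactBuildingsMetrizable}, which control precisely how continuous automorphisms act on panels in rank~$2$.

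Finally, for the statement about special automorphisms I would use the homomorphism $\pi\colon G\to\mathrm{Sym}(I)$ that records the permutation of vertex types induced by an automorphism. The finitely many vertex sets $V_i$ are each compact, hence clopen in the vertex space $\bigsqcup_{i\in I}V_i$; therefore any automorphism close enough to a given one in the compact--open topology carries each $V_i$ into the same target set, so $\pi$ is locally constant, hence continuous into the discrete finite group $\mathrm{Sym}(I)$. Its kernel is exactly the group of continuous \emph{special} automorphisms, which is therefore an open normal subgroup of finite index, as claimed. Everything here is formal once $C$ is known to be compact metrisable; the one hard input is the equicontinuity on panels, which genuinely needs the rank~$2$ theory.
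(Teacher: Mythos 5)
Your treatment of second countability, metrizability and the openness of the special automorphism subgroup is fine and matches what the paper needs (the paper gets second countability from \cite[XII.5.2]{dug} and metrizability from \cite[IX.9.2]{dug}). The problem is local compactness, which is the substance of the theorem, and there your argument has a genuine gap. You correctly identify that everything hinges on producing an equicontinuous identity neighbourhood $N$, but you do not produce one: you ``expect to handle it'' by passing to rank~2 residues and citing \cite{knarr}. Two things go wrong at this point. First, the reduction to panels via Proposition~\ref{Coordinates} does not work as stated: the Schubert-cell coordinates are based at the fixed chamber $c_0$ (and at auxiliary chambers $d\in C_v(c_0)$), and an automorphism $g$ near the identity moves $c_0$, so $g$ intertwines the coordinatization based at $c_0$ with the one based at $g(c_0)$ rather than acting coordinatewise on a fixed product of punctured panels; the same issue prevents you from restricting a neighbourhood of the identity of $\mathrm{Auttop}(\Delta)$ to a fixed rank~2 residue, since that residue is not stabilized by automorphisms merely close to the identity. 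Second, the result of \cite{knarr} invoked in Proposition~\ref{CompactBuildingsMetrizable} concerns second countability of compact polygons, not equicontinuity of automorphisms near the identity, so the citation does not supply the missing input.

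A smaller slip: the set $\{(c,c')\mid \delta(c,c')=w\}$ is not closed in general --- for $w=w_0$ it is the \emph{open} set of opposite pairs, cf.\ Lemma~\ref{BasicLemmaOnCompactBuildings}(iv); what is closed is the set of pairs joined by a possibly stammering gallery of type $\mathbf i$. Hence a uniform limit $g$ of automorphisms a priori only satisfies a one-sided inequality on $W$-distances, and you must also use the limit of the inverses to conclude that $g$ preserves $\delta$. For comparison, the paper does not attempt a self-contained argument at all: it observes that $\Delta$ is a metric building in the sense of \cite{burns} by Proposition~\ref{CompactBuildingsMetrizable}, checks that the argument of \cite[Thm.~2.1, pp.~20--21]{burns} uses only that every panel lies in an irreducible rank~2 residue, and imports both local compactness and the openness of the special subgroup from there. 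To complete your proof you would either have to reproduce that argument or cite it as the paper does; as written, the central claim is left unproved.
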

\begin{proof}
This is essentially \cite[Thm.~2.1]{burns}.
We first note that by Proposition~\ref{CompactBuildingsMetrizable}, $\Delta$ is a
metric building in the sense of \cite[p.\,12]{burns}. We also note that
a sufficiently small neighborhood of the identity of $\mathrm{Auttop}(\Delta)$
consists only of type-preserving automorphisms.
We note that the argument in \cite[pp.\,20--21]{burns} requires only
that each rank~1 residue is contained in some irreducible rank~2
residue. This is  precisely our assumption. Thus a sufficiently small
identity neighborhood of $\mathrm{Auttop}(\Delta)$ is compact and consists of
special automorphisms \cite[p.\,21]{burns}. Moreover, $\mathrm{Auttop}(\Delta)$
is second countable, because this is true for $\Delta$,
cp.~\cite[XII.5.2]{dug}, and therefore metrizable \cite[IX.9.2]{dug}.
\end{proof}

\begin{proposition}
Let $f:\Delta\to\Delta$ be an abstract automorphism of the compact
building $\Delta$. If, for each $i\in I$, there is some $i$-panel
$R_i\subseteq C$ such that $f$ is continuous on $R$, then
$f$ is a continuous automorphism of $\Delta$.
\end{proposition}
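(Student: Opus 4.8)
The plan is to reduce continuity of $f$ to continuity on individual panels, and then to propagate the hypothesized continuity from one panel of each type to \emph{all} panels by transferring it across opposite panels. First I would reduce to the case that $\Delta$ is infinite, irreducible and of rank $\geq 2$: by Corollary~\ref{CompactProductDecomposition} a compact building is the join of its irreducible factors and $f$ permutes these while respecting the product topology on chambers; a finite factor carries the discrete topology and so imposes no condition, and an irreducible rank-one factor is itself a single panel, on which the hypothesis already asserts continuity. Next, since each vertex set $V_i$ is a continuous (hence, by compactness, quotient) image of the compact chamber set $C$ and $f$ intertwines these coordinate projections, it suffices to prove that $f$ is continuous on $C$. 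Finally, every chamber is opposite some chamber (e.g.\ inside an apartment), so the big cells $C_{w_0}(c_0)$ cover $C$ and are open by Lemma~\ref{BasicLemmaOnCompactBuildings}(v); thus it suffices to prove that $f$ is continuous on each big cell.

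On a big cell I would invoke the coordinates of Proposition~\ref{Coordinates}. Because $f$ preserves the $W$-distance up to the diagram automorphism it induces, and $w_0$ is fixed by every diagram automorphism, $f$ carries $C_{w_0}(c_0)$ onto $C_{w_0}(f(c_0))$ and commutes with the combinatorial projections, i.e.\ $f\circ\mathrm{proj}_{p^i(x)}=\mathrm{proj}_{p^{f(i)}(f(x))}\circ f$. Hence, under the coordinate homeomorphisms of Proposition~\ref{Coordinates} (which are continuous with continuous inverse by Corollary~\ref{projContinuous}), $f$ becomes a finite product of maps, each of which is the restriction of $f$ to a single punctured panel. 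Therefore, once we know that $f$ is continuous on every panel, it is continuous on every big cell, hence on $C$, hence on $\Delta$.

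It remains to show that $f$ is continuous on every panel, and the key tool is a transfer lemma: if $P$ and $Q$ are opposite panels, then $f|_P$ is continuous if and only if $f|_Q$ is. Indeed, by \cite[5.14(i)]{weiss1} the restricted projections $\mathrm{proj}_Q|_P$ and $\mathrm{proj}_P|_Q$ are mutually inverse bijections, continuous by Corollary~\ref{projContinuous}, and since $f$ commutes with projections and preserves opposition one has $f|_Q=\mathrm{proj}_{f(Q)}|_{f(P)}\circ f|_P\circ\mathrm{proj}_P|_Q$; as the outer maps are homeomorphisms, the equivalence follows.

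Fix a type $i$ and set $j=w_0iw_0$. I claim that any two $i$-panels $P_1,P_2$ admit a common opposite $j$-panel; granting this, the transfer lemma applied twice shows that $f|_{P_2}$ is continuous if and only if $f|_{R_i}$ is, so $f$ is continuous on every $i$-panel, and letting $i$ range over $I$ completes the proof. To produce the common opposite I would first note that for a fixed chamber $c$ the big cell $C_{w_0}(c)$ is dense: by Proposition~\ref{pop2} (in either the connected or the Cantor case) neither $C$ nor any panel has isolated points, so from any chamber one can repeatedly move within a panel to strictly increase the $W$-distance to $c$ while staying inside a prescribed neighborhood, eventually reaching an opposite of $c$. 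Consequently, for each $P_s$ the open set $O_s=\{x\in C:\ x\text{ is opposite some chamber of }P_s\}$ contains a dense big cell and is therefore dense; since $C$ is compact Hausdorff, hence Baire, we get $O_1\cap O_2\neq\emptyset$. Choosing $c_0\in O_1\cap O_2$, it is opposite to some $c_1\in P_1$ and some $c_2\in P_2$, and then by \cite[5.13]{weiss1} the $j$-panel $p^j(c_0)$ is opposite to both $P_1=p^i(c_1)$ and $P_2=p^i(c_2)$. The main obstacle is precisely this existence of a common opposite panel: the reductions, the coordinatization and the transfer lemma are formal consequences of the cited results, whereas here one must combine the combinatorial opposition facts with genuine topological input (density of big cells together with the Baire property) in order to move between panels of the same type. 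This is essential exactly in the totally disconnected case, where the space of $i$-panels is itself a Cantor set and no connectedness argument is available.
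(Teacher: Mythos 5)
Your argument is correct and its overall architecture is the same as the paper's: transfer continuity between opposite panels via the mutually inverse restricted projections of \cite[5.14(i)]{weiss1} and Corollary~\ref{projContinuous}, then pass from panels to Schubert cells by Proposition~\ref{Coordinates}, to $C$ via the open cover by big cells, and to the vertex sets via the quotient maps $C\to V_i$. The one step you treat genuinely differently is the propagation from the given $i$-panel $R_i$ to an arbitrary $i$-panel: the paper simply invokes \cite[3.30]{tits-spherical}, the purely combinatorial fact that panels of a given type are linked by a chain of successively opposite panels, and applies the transfer lemma along that chain, whereas you prove the stronger statement that any two $i$-panels admit a \emph{common} opposite $j$-panel, and you prove it topologically: the big cell $C_{w_0}(c)$ is dense because no panel has an isolated point (this is where Proposition~\ref{pop2} and your initial reduction to the infinite irreducible case are really needed), so the two open sets of chambers opposite something in $P_1$, respectively $P_2$, must intersect --- note that Baire is overkill here, since a dense set meets every nonempty open set. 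Your route yields a two-step chain instead of an a priori longer one, but at the price of the irreducibility/infiniteness reductions and the perfectness of panels, none of which the paper's combinatorial citation requires; the paper's version is thus slightly more robust, while yours is self-contained within the topological machinery already developed in the paper. Both are valid.
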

\begin{proof}
Suppose that $f$ is continuous on the panel $R$. Let $S$ be a
panel opposite to $R$. By \cite[5.14(i)]{weiss1} the maps
$\mathrm{proj}_R|_S$ and
$\mathrm{proj}_S|_R$ are inverses of each other.
Moreover, $f|_S=\mathrm{proj}_{f(S)}\circ f\circ\mathrm{proj}_R|_S$ is
continuous by Corollary~\ref{projContinuous}. It follows from
\cite[3.30]{tits-spherical} that $f$ is continuous on every panel of the same
type as $R$. Therefore $f$ is continuous on every panel of $\Delta$.
From Proposition~\ref{Coordinates} we see that $f$ is continuous
on every Schubert cell. In particular, $f$ is continuous on the big cells.
Since these are open and cover $C$, $f$ is continuous on $C$.
Finally, we note that $C\to V_i$ is an $f$-equivariant quotient map,
so $f$ is also continuous on each vertex set $V_i$.
\end{proof}

\begin{corollary}\label{ElationsAreContinuous}
Let $\Delta$ be a compact spherical building. Then every element of every root group
is continuous.
\end{corollary}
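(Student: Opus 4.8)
The plan is to deduce this from the Proposition immediately preceding it, which reduces the continuity of an abstract automorphism $f$ to the verification that $f$ is continuous on a single panel of each type $i\in I$. So I would fix a root $\alpha$ of some apartment and an element $u\in U_\alpha$. By Definition~\ref{thm1x}, $u$ acts trivially on every panel $P$ that contains two chambers of $\alpha$; on such a panel the restriction $u|_P$ is the identity map and is therefore continuous. It thus suffices to show that for every type $i\in I$ the root $\alpha$ possesses at least one \emph{interior} panel of type $i$, that is, a panel of type $i$ two of whose chambers lie in $\alpha$.

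This last point is a purely combinatorial statement about the spherical Coxeter complex $\Sigma$ underlying the apartment, and it is where the real content lies. I would recall that the reflection $r_\alpha$ fixing the wall $\partial\alpha$ acts on $\Sigma$ as a type-preserving involution interchanging $\alpha$ with the opposite root $-\alpha$. A panel $P$ of the apartment fails to be interior to $\alpha$ precisely when $r_\alpha$ interchanges the two chambers of $P$, i.e. when $P$ lies on $\partial\alpha$. Identifying the chambers of $\Sigma$ with $W$, the $i$-panels are the cosets $\{w,ws_i\}$, of which there are $|W|/2$, while those lying on $\partial\alpha$ are exactly the $\{w,ws_i\}$ with $ws_iw^{-1}=r_\alpha$; these form a set of size $|C_W(s_i)|/2<|W|/2$, the strict inequality holding because no generator $s_i$ is central in an irreducible Coxeter group of rank at least~$2$. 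Hence some $i$-panel lies off the wall, and applying the type-preserving involution $r_\alpha$ if necessary we may take it to lie in $\alpha$. This produces an interior $i$-panel of $\alpha$ for every type $i$ belonging to the irreducible component in which $\alpha$ lives. I expect this Coxeter-theoretic claim to be the only genuine obstacle; everything else is a direct application of machinery already in place.

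Finally I would dispose of the remaining types in the reducible case. If $\Delta$ is irreducible there are none, and the preceding Proposition applies at once to give the continuity of $u$. If $\Delta$ is a join and $\alpha$ lies in one factor, then $u$ fixes the complementary factor pointwise, so for every type $i$ outside that factor and every $i$-panel $R$ the map $u|_R$ merely relabels a fixed coordinate and is again continuous; alternatively one reduces directly to the irreducible case using Corollary~\ref{CompactProductDecomposition}. In all cases $u$ is continuous on a panel of each type, and the preceding Proposition then yields that $u$ is a continuous automorphism of $\Delta$.
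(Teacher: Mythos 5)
Your proof is correct and follows the same route as the paper: the paper's own argument is the one-line observation that a root group fixes some panel of each type chamber-wise (so the preceding Proposition applies), which is exactly your strategy. You have merely filled in the combinatorial justification --- counting the $i$-panels lying on the wall $\partial\alpha$ via the centralizer of $s_i$ --- which the paper leaves implicit, and your treatment of it (and of the reducible case) is sound.
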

\begin{proof}
A root group fixes some panel of each type chamber-wise; see Definition~\ref{thm1x}.
Compare~\cite[5.1]{burns}.
\end{proof}

\begin{notation}\label{abc45}
Let $\Delta$ be an irreducible spherical building of rank at least~2
that satisfies the Moufang condition.
We denote by $G^\dagger$ the subgroup of $\mathrm{Aut}(\Delta)$ generated
by all the root groups of $\Delta$.
\end{notation}

\begin{corollary}\label{abc46}
Let $\Delta$ be an irreducible compact spherical building of rank at least~$2$
that satisfies the Moufang condition. Then $G^\dagger$ is contained in 
$\mathrm{Auttop}(\Delta)$. In particular, $\mathrm{Auttop}(\Delta)$ acts strongly
transitively on $\Delta$.
\end{corollary}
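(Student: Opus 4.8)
The plan is to obtain both assertions almost immediately from the results already in place, since Corollary~\ref{abc46} is really just the combination of Corollary~\ref{ElationsAreContinuous} with the classical strong transitivity of the root-group-generated subgroup of a Moufang building. First I would establish the inclusion $G^\dagger\subseteq\mathrm{Auttop}(\Delta)$. By Corollary~\ref{ElationsAreContinuous}, every element of every root group $U_\alpha$ is a continuous automorphism of $\Delta$ and hence lies in $\mathrm{Auttop}(\Delta)$. By Theorem~\ref{AutIsMetrizable}, $\mathrm{Auttop}(\Delta)$ is a group, being the group of \emph{all} continuous automorphisms (and thus closed under composition and inversion). Since, by Notation~\ref{abc45}, $G^\dagger$ is the subgroup of $\mathrm{Aut}(\Delta)$ generated by all the root groups, and each such generator already lies in the subgroup $\mathrm{Auttop}(\Delta)$, the whole of $G^\dagger$ is contained in $\mathrm{Auttop}(\Delta)$. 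This part is purely formal bookkeeping using that the continuous automorphisms form a group.

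For the \emph{in particular} statement, I would invoke the standard fact from the structure theory of spherical buildings that, when $\Delta$ satisfies the Moufang condition, the subgroup $G^\dagger$ generated by the root groups already acts strongly transitively on $\Delta$, i.e., transitively on the pairs consisting of an apartment together with a chamber contained in it; see \cite{weiss1}. Indeed, the Moufang condition of Definition~\ref{thm1x} states precisely that each $U_\alpha$ acts transitively on the apartments containing $\alpha$, and combining this over all roots and chambers yields strong transitivity of $G^\dagger$. Having just shown $G^\dagger\subseteq\mathrm{Auttop}(\Delta)$, the ambient group $\mathrm{Auttop}(\Delta)$ acts strongly transitively as well, since any group containing a strongly transitive subgroup is itself strongly transitive.

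The only genuinely substantive ingredient here is the strong transitivity of $G^\dagger$ on a Moufang building, which I would simply quote from the structure theory rather than reprove. I do not anticipate any real obstacle: the essential topological work has already been carried out in Corollary~\ref{ElationsAreContinuous}, and everything remaining is a short formal argument together with the citation of a standard building-theoretic fact.
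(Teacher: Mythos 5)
Your proposal is correct and follows essentially the same route as the paper: the inclusion $G^\dagger\subseteq\mathrm{Auttop}(\Delta)$ comes from Corollary~\ref{ElationsAreContinuous} together with the fact that the continuous automorphisms form a group, and strong transitivity of $G^\dagger$ is quoted from the structure theory of Moufang buildings (the paper cites \cite[11.12]{weiss1}). No gaps.
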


\begin{proof}
By \cite[11.12]{weiss1}, the group $G^\dagger$ defined in Notation~\ref{abc45}
acts strongly transitively on $\Delta$.
By Corollary~\ref{ElationsAreContinuous}, 
the group $G^\dagger$ is contained in $\mathrm{Auttop}(\Delta)$.
\end{proof}

\smallskip
\section{Moufang spherical buildings}\label{777}

\noindent
In this section, we gather the various results about Moufang spherical 
buildings we will need. See Appendix~B in \cite{weiss3} for a summary of the
classification of these buildings. 

Throughout this section, we let 
$\Delta$ be an irreducible spherical building of rank ${\it l}$ at least~2 
that satisfies the Moufang condition.

\begin{notation}\label{pop11}
Let $F$ be the defining field of $\Delta$ as defined in \cite[30.15]{weiss3}.
Note that although we call $F$ the defining {\it field} of $\Delta$, $F$ might be
a skew field or an octonion division algebra. Note, too, that although we call
$F$ {\it the} defining field of $\Delta$, it is not quite an invariant of $\Delta$.
By \cite[30.29]{weiss3}, the following hold.
\begin{thmenum}
\item If $\Delta$ is of type ${\sf A}_{\it l}$, then only the unordered pair $\{F,F^{\mathrm{opp}}\}$
is an invariant of $\Delta$.
\item If $\Delta$ is mixed as defined in \cite[30.24]{weiss3}, 
then $F$ is commutative, $\mathrm{char}(F)=p\le3$, 
there exists an extension $E$ of $F$ such that $E^p\subset F\subset E$
and the unordered pair $\{F,E\}$, but not $F$ alone, is an invariant of $\Delta$. Note
that if we replace $E$ by the isomorphic field $E^p$,
then $F^p\subset E\subset F$.
\item In every other case, $F$ is an invariant of 
$\Delta$.
\end{thmenum}
If $F$ is a local skew field, then so is $F^{\mathrm{opp}}$ (with the same valuation). 
If $F$, $E$ and $p$ are as in (ii), then (by Proposition~\ref{abc101})
$E$ equals either $F^p$ or $F$; in both cases, $E$ is isomorphic to $F$
and, in particular, $E$ is also a local field.
Thus in those few cases where there are really two defining fields, one
is a local field if and only if the other one is.
\end{notation}

\begin{theorem}\label{abc1}
Let $F$ be as in Notation~{\rm\ref{pop11}}. Then the following hold.
\begin{thmenum}
\item If $\Delta$ is the building at infinity of a locally finite affine building 
$X$, then $F$ is a local field and 
$X$ is one of the buildings in \cite[Tables~28.4--28.6]{weiss3}
where $F$ is called $K$.
\item If $F$ is a local field, then $\Delta$ is the building at infinity 
of a unique locally finite affine building $X$.
\end{thmenum}
\end{theorem}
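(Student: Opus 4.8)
The plan is to read Theorem~\ref{abc1} as a translation between the analytic notion of a \emph{local field} and the geometric notion of a \emph{locally finite affine building}, with the heavy lifting supplied by the structure theory of affine buildings in \cite{weiss3}. The mechanism is the following. A locally finite affine building $X$ whose building at infinity is the Moufang spherical building $\Delta$ is equivalent to a discrete valuation of the root datum of $\Delta$ in the sense of Bruhat--Tits; see \cite{bruh} and \cite{weiss3}. Each root group $U_\alpha$ of $\Delta$ is parametrized by the defining field $F$ (or a structure built from $F$, as recorded in \cite{weiss3}), and the affine structure filters the root groups; restricting this filtration to the parametrizing copy of $F$ yields a valuation of $F$. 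Both parts of the theorem then amount to matching up the valuation of $F$ with the valuation of the root datum, and most of this has already been carried out in the classification of \cite{bruh}, \cite{tits-local} and \cite{weiss3}.

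For part~(i), assume that $\Delta$ is the building at infinity of a locally finite affine building $X$. Since we use the complete system of apartments (Conventions~\ref{abc83}), $X$ is complete in its ${\rm CAT}(0)$ metric, so the induced valuation of the root datum, and hence its restriction $v$ to $F$, is complete. The valuation $v$ is discrete because $X$ is simplicial, and its residue field is finite: the order of the residue field governs the number of chambers in a panel of $X$, which is finite exactly because $X$ is locally finite. Thus $F$ is complete with respect to a discrete valuation with finite residue field and is therefore a local field in the sense of Definition~\ref{abc0}; in particular $F$ is associative, since an octonion division algebra admits no such valuation (by Proposition~\ref{localaltalgebras} a locally compact nondiscrete octonion division algebra is connected, not totally disconnected). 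Finally, comparing with the list of Bruhat--Tits buildings in \cite[Tables~28.4--28.6]{weiss3} shows that $X$ occurs there and that the field denoted $K$ in those tables is precisely our $F$.

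For part~(ii), assume conversely that $F$ is a local field. By Proposition~\ref{abc0a} it carries a unique discrete valuation, whose residue field is finite by Definition~\ref{abc0}. Feeding this valuation into the construction of \cite{weiss3} produces a valuation of the root datum of $\Delta$, and hence an affine building $X$ whose building at infinity is $\Delta$; the finiteness of the residue field makes $X$ locally finite, and the completeness of $F$ makes $X$ the building attached to the complete apartment system. Uniqueness follows from Proposition~\ref{abc0a}: the valuation of $F$ is unique, and the isomorphism type of $X$ depends only on the associated valuation of the root datum up to the standard equivalence, so $X$ is determined. The fact that in types ${\sf A}_{\it l}$ and in the mixed cases only a pair $\{F,F^{\mathrm{opp}}\}$ or $\{F,E\}$ is an invariant of $\Delta$ causes no trouble, since by Notation~\ref{pop11} one member of such a pair is a local field precisely when the other is.

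The main obstacle is the uniform dictionary, across all Moufang types, between a (possibly noncommutative) valuation of the defining field $F$ and a valuation of the entire root datum of $\Delta$. One must verify that, after the standard parametrization of the root groups, the filtration induced by the affine structure is controlled by $v$ on $F$ together with the combinatorial data of the root system, so that discreteness, completeness and finiteness of the residue field transfer correctly in both directions and so that local finiteness of $X$ corresponds exactly to finiteness of the residue field of $F$. This compatibility is precisely the content of the valuation theory and the case-by-case analysis in \cite{weiss3}, on which the argument relies.
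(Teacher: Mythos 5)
Your overall strategy coincides with the paper's: both parts are delegated to the Bruhat--Tits/Weiss classification, translating between the (unique) discrete valuation of the defining field $F$ and a valuation of the root datum of $\Delta$, and the outline you give is correct as far as it goes. However, two concrete verifications that the paper carries out are absent from your sketch, and both are needed before part~(ii) is actually established. First, the construction of an affine building from a valuation of $F$ via \cite[27.2]{weiss3} requires the relevant parameter system to be ``$\nu$-compatible''; the paper first invokes \cite[28.11(viii)]{weiss3} to rule out the exceptional quadrangles (Cases ${\mathcal Q}_{\mathcal E}$ and ${\mathcal Q}_{\mathcal F}$) over a local field, and only then does completeness of $F$ yield $\nu$-compatibility in every remaining case. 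Your ``feeding this valuation into the construction'' skips the step that guarantees the construction applies at all.

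Second, and more seriously, under Conventions~\ref{abc83} ``building at infinity'' means with respect to the \emph{complete} apartment system, whereas \cite[27.2 and 27.1(iv)]{weiss3} produce and characterize a pair $(X,{\mathcal A})$ with ${\mathcal A}$ only \emph{some} apartment system. Your assertion that ``the completeness of $F$ makes $X$ the building attached to the complete apartment system'' is precisely the point that does not come for free: the classification results \cite[28.33--28.38]{weiss3} are stated under the hypothesis that ${\mathcal A}$ is complete rather than proving it, and the paper must argue separately --- by checking that this hypothesis is not actually used, and then by invoking \cite[17.11, 19.28(ii), 23.13, 24.48, 25.27 and 28.15]{weiss3} together with the observations that $F_0$ (in the involutory case) and $L$ (in the mixed hexagonal case, via Proposition~\ref{abc101}) are closed in $F$ --- that each Bruhat--Tits pair in the tables is complete in the sense of \cite[17.1]{weiss3}. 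Without this, the uniqueness assertion in (ii), and the applicability of the table classification in (i) under the paper's conventions, remain open. The rest of your argument (the heuristic in part~(i) for why the induced valuation of $F$ is discrete, complete and has finite residue field, and the reduction of uniqueness to Proposition~\ref{abc0a}) matches the paper's reasoning.
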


\begin{proof}
If $\Delta$ is the building at infinity of a locally finite affine building 
$X$, then by \ref{abc83} and the remarks in the two paragraphs preceding \cite[28.1]{weiss3},
$F$ is a local field. Suppose, conversely, that $F$ is a local field.
By \cite[28.11(viii)]{weiss3}, $\Delta$
is not an exceptional quadrangle and therefore 
the relevant parameter system is  ``$\nu$-compatible'' in every case
(as defined in the places cited in the second column of \cite[Table~27.2]{weiss3}) 
simply because $F$ is complete with respect to its valuation $\nu$.
It follows from in \cite[27.2]{weiss3} that $\Delta$ is the building at infinity of 
an affine building $X$ with respect to some system of apartments ${\mathcal A}$ 
and from \cite[27.1(iv)]{weiss3} that the pair $(X,{\mathcal A})$ is unique
(since the valuation of $F$ is unique). 
It is the main result of \cite[Chapter~28]{weiss3} (see, in particular, 
\cite[28.11(viii), 28.14 and 28.33--28.38]{weiss3}) that if $F$ is a local field
and $\Delta$ is the building at infinity of an affine building $X$ with respect
to a system of apartments ${\mathcal A}$, then 
$(X,{\mathcal A})$ is one of the Bruhat-Tits pairs (as defined in \cite[13.1]{weiss3})
in \cite[Tables~28.4--28.6]{weiss3} (where $F$ is called $K$). It can be read
off from the last column of these tables that $X$ is, in every case, locally finite.

It is assumed in \cite[28.29]{weiss3} that ${\mathcal A}$ is complete, but
this assumption is not used in \cite[28.33--28.38]{weiss3}. Instead, it can
be observed by invoking
\cite[17.11, 19.28(ii), 23.13, 24.48, 25.27 and 28.15]{weiss3} that 
each Bruhat-Tits pair in these tables is, in fact,
complete as defined in \cite[17.1]{weiss3}. 
(When invoking \cite[23.13]{weiss3}, it needs to be observed that
by \cite[28.15]{weiss3}, $F_0$ is a closed subset of $F$
if $(F,F_0,\sigma)$ is an involutory set and when invoking \cite[25.27]{weiss3},
it needs to be observed that by Proposition~\ref{abc101},
$L$ is closed in $F$ if $\mathrm{char}(F)=3$ and $L$ is a subfield such that
$F^3\subset L\subset F$.)
\end{proof}

We now fix an irreducible rank~2 residue $\Gamma$ of $\Delta$.
By \cite[7.14, 7.15 and 11.8]{weiss1}, $\Gamma$ is a {\it Moufang $n$-gon} for some $n\ge3$
(as defined in \cite[4.2]{TW}). Thus, in particular, $\Gamma$ is a graph whose
vertices are the panel and whose edges are the chambers.

\begin{notation}\label{abc70}
Let $\Sigma$ be an apartment of $\Delta$ containing an apartment of $\Gamma$. 
By \cite[8.13(i)]{weiss1}, the intersection $\Sigma\cap\Gamma$ is a circuit of length $2n$ whose
vertices we can label with the integers modulo $2n$, so that $i-1$ and $i$ are adjacent
for each $i$. For each $i$, there exists 
by \cite[3.11 and 4.10]{weiss1} a unique root
$\alpha_i$ of $\Sigma$ such that $\alpha\cap\Gamma=(i,i+1,\ldots,i+n)$.
Let $U_i=U_{\alpha_i}$ for each $i$.
\end{notation}

\begin{proposition}\label{abc4}
Let $U_i$ for all $i$ be as in Notation~{\rm\ref{abc70}}. Then the following hold.
\begin{thmenum}
\item The group $\langle U_1,U_2,\ldots,U_n\rangle$ acts faithfully on $\Gamma$.
\item If $0\le j\le n-3$ and $1\le i\le n-j$, then the subgroup
$$U_iU_{i+1}\cdots U_{i+j}$$
is the pointwise stabilizer of its set of fixed points in $\Gamma$. 
\end{thmenum}
\end{proposition}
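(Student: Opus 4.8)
The plan is to transfer both statements to the Moufang $n$-gon $\Gamma$ and to argue there using only two structural inputs: the unique factorization in a root group sequence and the simple transitivity of a single root group on the apartments (equivalently, the chambers) through its root. By the construction in Notation~\ref{abc70} and \cite[Chapter~11]{weiss1}, the restriction of each $U_k=U_{\alpha_k}$ to $\Gamma$ is the root group of $\Gamma$ attached to the root $\alpha_k\cap\Gamma=(k,k+1,\ldots,k+n)$, and the commutator relations $[U_p,U_q]\subseteq\langle U_{p+1},\ldots,U_{q-1}\rangle$ for $p<q$ show that $\langle U_1,\ldots,U_n\rangle=U_1U_2\cdots U_n$ with unique factorization, as in \cite{TW}.

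For (i) I would first note that each $U_k$ restricts faithfully to $\Gamma$: by the Moufang condition (Definition~\ref{thm1x}) a nontrivial $u\in U_{\alpha_k}$ carries some apartment through $\alpha_k$ to a different one, hence moves a chamber of $\Gamma$ incident with an end vertex of $\alpha_k$, so $u|_\Gamma\neq1$. Now suppose $g=u_1u_2\cdots u_n$, with $u_k\in U_k$, acts trivially on $\Gamma$. Restricting, the product $\bar u_1\cdots\bar u_n$ of the corresponding root group elements of $\Gamma$ is the identity of $\mathrm{Aut}(\Gamma)$; by unique factorization in the root group sequence of $\Gamma$ each $\bar u_k=1$, whence each $u_k=1$ by the previous sentence and $g=1$. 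Thus $\langle U_1,\ldots,U_n\rangle$ acts faithfully on $\Gamma$.

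For (ii) the first task is to compute the fixed-point set. Writing $P=U_iU_{i+1}\cdots U_{i+j}$, the commutator relations make $P$ a group, so $\mathrm{Fix}_\Gamma(P)=\bigcap_{k=i}^{i+j}\mathrm{Fix}_\Gamma(U_k)$. Since $U_{\alpha_k}$ fixes the root $\alpha_k$ together with every vertex adjacent to an interior vertex of $\alpha_k$, but moves the outer neighbours of its two end vertices, intersecting over $k=i,\ldots,i+j$ leaves exactly the path $\beta=(i+j,i+j+1,\ldots,i+n)$ together with all neighbours of its interior vertices $i+j+1,\ldots,i+n-1$. The bound $j\le n-3$ is precisely what guarantees that $\beta$ has length $n-j\ge3$, so that its interior is nonempty and $\mathrm{Fix}_\Gamma(P)$ already pins down the apartment $\Sigma\cap\Gamma$ locally; the bound $i+j\le n$ keeps the whole interval inside the positive system $U_1,\ldots,U_n$. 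A short count then shows that the roots of $\Gamma$ containing $\beta$ are exactly $\alpha_i,\ldots,\alpha_{i+j}$. The inclusion $P\subseteq\mathrm{Stab}^{\mathrm{pt}}(\mathrm{Fix}_\Gamma(P))$ is immediate, since $\mathrm{Fix}_\Gamma(P)\subseteq\mathrm{Fix}_\Gamma(U_k)$ for each $k$.

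The substance of (ii) is the reverse inclusion, and this is where I expect the real work. I would argue by induction on $j$, peeling off one end of $\beta$ at a time. Given $g$ fixing $\mathrm{Fix}_\Gamma(P)$ pointwise, $g$ fixes the end vertex $i+n$ of $\beta$ and its neighbour $i+n-1$, hence permutes the outer neighbours of $i+n$; using the simple transitivity of $U_i$ on these neighbours I would choose the unique $u\in U_i$ for which $u^{-1}g$ additionally fixes the apartment continuation $i+n+1$, aiming to place $u^{-1}g$ in the pointwise stabilizer of $\mathrm{Fix}_\Gamma(U_{i+1}\cdots U_{i+j})$, which by induction equals $U_{i+1}\cdots U_{i+j}$; this would give $g\in U_i\,U_{i+1}\cdots U_{i+j}=P$, with the base case $j=0$ (where $P=U_i$) reducing to the defining simple transitivity of a single root group. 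The delicate point — and the main obstacle — is exactly the propagation step: one must check that, in the presence of the already-fixed core $\beta$ and the stars of its interior, agreement with a root group element on a single further neighbour of $i+n$ forces agreement on the entire star at $i+n$, so that the residual freedom after fixing the core is genuinely parametrized by the end root group $U_i$ and the induction advances. Here I would lean on the combinatorics of minimal galleries and the girth and diameter axioms of $\Gamma$, and, should a direct argument prove unwieldy, fall back on the characterization in \cite{weiss1} of pointwise stabilizers of such convex fixed configurations as being generated by the root groups they contain, which together with the count above identifies that stabilizer with $\langle U_i,\ldots,U_{i+j}\rangle=P$.
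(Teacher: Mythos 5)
Your part (i) is essentially sound and, in substance, reconstructs what the paper simply cites: the paper disposes of (i) by quoting \cite[11.27]{weiss1} and of (ii) by quoting \cite[5.2]{TW} (after using (i) to transfer the statement to the polygon $\Gamma$). For (i), the chain ``restriction of $U_{\alpha_k}$ to $\Gamma$ is injective and lands in the root group of the polygon; unique factorization in the root group sequence of $\Gamma$ then kills each factor'' is the right argument. One correction: unique factorization is \emph{not} a formal consequence of the commutator relations $[U_p,U_q]\subseteq U_{p+1}\cdots U_{q-1}$ (those only give the product decomposition $\langle U_1,\ldots,U_n\rangle=U_1\cdots U_n$ as a set); injectivity of the product map is a theorem proved in \cite{TW} via the action on $\Gamma$. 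Since you only invoke uniqueness for the polygon's own root groups, where it is available, the argument survives, but the sentence claiming the commutator relations ``show'' unique factorization is wrong as written.

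Part (ii) has a genuine gap, which you candidly flag yourself. First, your identification of $\mathrm{Fix}_\Gamma(U_k)$ with $\alpha_k\cap\Gamma$ together with the stars of its interior vertices only proves one inclusion: you show this set is fixed and that the outer neighbours of the two end vertices are moved, but not that no vertex farther from $\alpha_k$ is fixed by all of $U_k$ (this needs an argument via uniqueness of geodesics of length $<n$, or a citation such as the relevant fixed-point lemmas in \cite{TW}). Second, and more seriously, the induction's propagation step is exactly the missing content: knowing that $u^{-1}g$ fixes the path $\beta$, the stars of its interior vertices, and the one additional vertex $(i+n+1)^{u^{-1}g}=i+n+1$ does not by itself force $u^{-1}g$ to fix the entire star of $i+n$, which is what you need to land in $\mathrm{Fix}_\Gamma(U_{i+1}\cdots U_{i+j})$ and advance the induction. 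Your proposed fallback --- a ``characterization in \cite{weiss1} of pointwise stabilizers of such convex fixed configurations as being generated by the root groups they contain'' --- is essentially a restatement of assertion (ii) itself, so it cannot be used to close the gap. The honest resolution is the one the paper takes: this is precisely the content of \cite[5.2]{TW}, where the rigidity argument (an automorphism fixing a sufficiently long path together with the stars of its interior vertices lies in the corresponding product of root groups) is carried out.
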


\begin{proof}
Assertion (i) holds by \cite[11.27]{weiss1}. Assertion (ii) 
holds, therefore, by \cite[5.2]{TW}.
\end{proof}

\begin{notation}\label{abc81}
By \cite[17.1--17.7]{TW} and \ref{abc4}(i), 
the group $U_+:=\langle U_1,U_2,\ldots,U_n\rangle$ can be identified with
one of the groups described in \cite[16.1--16.9]{TW}.
We denote the nine cases (as in \cite{TW}) ${\mathcal T}$, ${\mathcal Q}_{\mathcal I}$,
${\mathcal Q}_{\mathcal Q}$, ${\mathcal Q}_{\mathcal D}$, ${\mathcal Q}_{\mathcal P}$,
${\mathcal Q}_{\mathcal E}$, ${\mathcal Q}_{\mathcal F}$, ${\mathcal H}$
and ${\mathcal O}$. (We have $n=3$ in the first case, $n=4$ in the next
six cases, $n=6$ in Case~${\mathcal H}$ and $n=8$ in Case~${\mathcal O}$.)
In each case, $U_+$ is defined in terms of a parameter system $\Xi$. In Case~${\mathcal T}$, 
$\Xi$ is an alternative division ring $k$ (that is to say, a field,
a skew field or an octonion division algebra); in Case~${\mathcal Q}_{\mathcal I}$,
an involutory set $(k,k_0,\sigma)$; in Case~${\mathcal Q}_{\mathcal I}$, an 
anisotropic quadratic space $(k,L,q)$, in Case~${\mathcal Q}_{\mathcal D}$, an 
indifferent set $(k,k_0,L_0)$, in Case~${\mathcal Q}_{\mathcal P}$,
an anisotropic quadratic space $(k,k_0,\sigma,L,q)$, in Cases~${\mathcal Q}_{\mathcal E}$
and~${\mathcal Q}_{\mathcal F}$ a quadrangular algebra defined over a field $k$, 
in Case~${\mathcal H}$, an hexagonal system $(J,k,N,\#,T,\times,1)$ and in Case~${\mathcal O}$,
an octagonal set $(k,\sigma)$. (A quadrangular algebra is defined in \cite{weiss2}; 
references to all the other relevant definitions
can be found in \cite[16.1--16.9]{TW}.) By \cite[38.9]{TW}, we can assume
that in Case~${\mathcal Q}_{\mathcal D}$ the 
indifferent set $\Xi=(k,k_0,L_0)$ is proper as defined in \cite[38.8]{TW}.
\end{notation}

\begin{proposition}\label{abc80}
Let $k$ be as in Notation~{\rm\ref{abc81}} and let $F$ be the defining field of $\Delta$
as defined in Notation~{\rm\ref{pop11}}. Then one of the following holds.
\begin{thmenum}
\item $k=F$ or $F^{\mathrm{opp}}$.
\item $F$ is commutative and $F/k$ is a separable quadratic extension.
\item $F$ is a quaternion or octonion division algebra and $k$ is its center.
\item $F$ is commutative, ${\mathrm{char}}(F)=p=2$ or $3$ and either $F^p\subset k\subset F$
or $k^p\subset F\subset k$.
\end{thmenum}
\end{proposition}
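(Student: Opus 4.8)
The plan is to identify, for each of the nine cases of Notation~\ref{abc81}, the parameter field $k$ appearing there with the defining field $F$ of Notation~\ref{pop11}, and to track how the two can differ. The underlying principle is that both $k$ and $F$ are extracted from the same building $\Delta$ via its irreducible rank-$2$ residues and root groups, so the relationship between them is governed entirely by the classification of Moufang polygons in \cite{TW} together with the definition of the defining field in \cite[30.15]{weiss3}. First I would reduce to the rank-$2$ residue $\Gamma$: the defining field $F$ of $\Delta$ is, by construction in \cite[30.15]{weiss3}, read off from $\Gamma$ (or rather from a compatible choice of coordinatizing data across residues), so it suffices to compare $F$ with the parameter field $k$ of the Moufang $n$-gon $\Gamma$. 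The four alternatives (i)--(iv) then correspond to the structural possibilities that arise as one passes between the ``root-group field'' $k$ and the ``building field'' $F$.

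Next I would go through the nine cases one by one. In Case~${\mathcal T}$ (triangles, $n=3$), $\Gamma$ is the building of a rank-$3$ projective geometry over an alternative division ring; here $k$ is this ring, and by the $\mathsf A_2$ convention in Notation~\ref{pop11}(i) the defining field is $F=k$ or $F^{\mathrm{opp}}$, giving alternative (i). For the quadrangle cases ${\mathcal Q}_{\mathcal I}$, ${\mathcal Q}_{\mathcal Q}$, ${\mathcal Q}_{\mathcal P}$, ${\mathcal Q}_{\mathcal E}$, ${\mathcal Q}_{\mathcal F}$, the parameter $k$ is the base field of an involutory set or (pseudo-)quadratic space or quadrangular algebra, and one checks against \cite[Appendix~B]{weiss3} and the definition of $F$: in the generic situation $F=k$, giving (i); but in the cases where $\Delta$ arises from a form over a separable quadratic extension, $F$ is that quadratic extension and $F/k$ is separable of degree~$2$, giving (ii), while in the quaternionic and octonionic quadrangle/hexagon cases ($\mathcal H$ with a quaternion or octonion coordinate, and the relevant quadrangles) $F$ is the division algebra and $k=Z(F)$ is its center, giving (iii). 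Finally, the indifferent case ${\mathcal Q}_{\mathcal D}$ and the octagonal case ${\mathcal O}$ are exactly the mixed/inseparable situations of Notation~\ref{pop11}(ii): here $\mathrm{char}(F)=2$ or $3$ and $k$ is sandwiched between $F$ and $F^p$ (or vice versa, depending on the normalization of $E$ versus $E^p$ noted in \ref{pop11}(ii)), which yields alternative (iv).

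The main obstacle will be bookkeeping rather than conceptual difficulty: one must match the notational conventions of \cite{TW} for the root-group parameter systems against those of \cite{weiss3} for the defining field, since the ``same'' field can appear under different names (and as $F$ versus $F^{\mathrm{opp}}$) in the two sources. In particular, care is needed in the mixed case to confirm that the inclusion runs in the direction claimed in (iv) — namely that either $F^p\subset k\subset F$ or $k^p\subset F\subset k$ — which requires tracking which of $F$ or $E$ plays the role of the root-group field; the symmetry remarked in Notation~\ref{pop11}(ii) (replacing $E$ by $E^p$) is what makes both inclusions occur. Once the dictionary between the two coordinatizations is fixed, each of the nine cases reduces to a direct inspection of the classification tables, and the separability assertion in (ii) follows because the quadratic extensions occurring in the relevant Moufang quadrangles are separable by the structure theory in \cite{TW}.
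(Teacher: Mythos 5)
Your proposal is essentially the paper's own argument: the proof given there is just the citation ``This holds by \cite[30.14--30.15]{weiss3},'' i.e., precisely the residue-by-residue comparison of the root-group parameter system of $\Gamma$ with the defining field of \cite[30.15]{weiss3} that you outline, so unpacking that citation case by case is the right (and the intended) route. One bookkeeping correction before you execute the nine-case check: the characteristic-$3$ instance of alternative (iv) arises from the mixed hexagons in Case~${\mathcal H}$ (buildings of type $\mathsf G_2$ with $E^3\subset F\subset E$), not from Cases~${\mathcal Q}_{\mathcal D}$ or ${\mathcal O}$, which both occur only in characteristic~$2$; so Case~${\mathcal H}$ contributes to alternative (iv) as well as to (i) and (iii), and your dictionary should be adjusted accordingly.
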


\begin{proof}
This holds by \cite[30.14--30.15]{weiss3}.
\end{proof}

\begin{corollary}\label{abc80a}
Let $k$ be as in Notation~{\rm\ref{abc81}} and let $F$ be the defining field of $\Delta$.
Then the following hold.
\begin{thmenum}
\item $k$ is a local field if and only if $F$ is.
\item If $k$ is a local field, then either $k$ or $\{k,k^{\mathrm{opp}}\}$ is
an invariant of $\Delta$.
\end{thmenum}
\end{corollary}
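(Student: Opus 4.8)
The plan is to derive Corollary~\ref{abc80a} directly from the case analysis in Proposition~\ref{abc80}, using the fact (Proposition~\ref{abc0a} together with Remark~\ref{abc0b}) that being a local field is preserved under passing to a finite separable extension, to an opposite algebra, to the center of a quaternion or octonion division algebra, and to a degree-$p$ purely inseparable sub- or over-field in characteristic $p$. For part (i), I would simply walk through the four alternatives (a)--(d) of Proposition~\ref{abc80}.

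In case (a), $k=F$ or $F^{\mathrm{opp}}$, and by Remark~\ref{abc0b} a skew field and its opposite carry the same valuation, so $k$ is a local field if and only if $F$ is. In case (b), $F/k$ is a finite separable extension, so completeness with respect to a discrete valuation and finiteness of the residue field pass in both directions: if $k$ is a local field then so is its finite extension $F$ (the valuation extends uniquely and the residue field stays finite), and conversely if $F$ is a local field then $k$, as a closed subfield over which $F$ is finite-dimensional, inherits the discrete valuation with finite residue field, hence is a local field; here one invokes the uniqueness of the valuation from Proposition~\ref{abc0a}. In case (c), $k$ is the center of a quaternion or octonion division algebra $F$, and $F$ is finite-dimensional over $k$; the valuation on $F$ (for the quaternion case, or on the relevant associative subrings in the octonion case) restricts to a discrete valuation on $k$ with finite residue field and conversely, so again the two conditions are equivalent. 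In case (d), $\mathrm{char}(F)=p\in\{2,3\}$ and one of $k$, $F$ is a degree-$p$ purely inseparable extension of the other; by Proposition~\ref{abc101}, a local field of characteristic $p$ has $[F:F^p]=p$, and a degree-$p$ purely inseparable sub- or over-field of such a field is again complete with finite residue field, so local fieldness is preserved in both directions. This exhausts the four cases and proves (i).

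For part (ii), assuming $k$ is a local field, I would again split by the cases of Proposition~\ref{abc80} and appeal to the invariance statements in Notation~\ref{pop11}. In cases (b) and (c), the defining field $F$ is genuinely an invariant of $\Delta$ (by \ref{pop11}(iii), since these are neither type ${\sf A}_{\it l}$ nor the mixed situation), and $k$ is recovered from $F$ by a canonical operation---passing to the subfield over which $F/k$ is the given separable quadratic extension, or taking the center---so $k$ itself is an invariant. In the mixed case (d), by \ref{pop11}(ii) the unordered pair $\{F,E\}$ is the invariant and $E$ is isomorphic to $F$ as a local field; the field $k$ in Proposition~\ref{abc80}(d) lies between $F^p$ and $F$ (or between $k^p$ and $F$), and it coincides with one of $F$, $E$ up to the identifications made there, so $k$ is determined up to isomorphism and hence is an invariant. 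In the type-${\sf A}_{\it l}$ case (a), only the unordered pair $\{F,F^{\mathrm{opp}}\}$ is an invariant by \ref{pop11}(i), and correspondingly $k\in\{F,F^{\mathrm{opp}}\}$, so here the invariant is the pair $\{k,k^{\mathrm{opp}}\}$ rather than $k$ alone.

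The main obstacle I anticipate is not any single hard argument but the bookkeeping of tracking, in each of the four cases of Proposition~\ref{abc80} crossed against the three invariance regimes of Notation~\ref{pop11}, exactly which of ``$k$'' or ``$\{k,k^{\mathrm{opp}}\}$'' is the invariant; one must match the type-${\sf A}_{\it l}$ and mixed cases carefully so that the dichotomy in the statement of (ii) comes out correctly, and confirm that the non-invariance of $F$ in those two regimes transfers to exactly the same non-invariance of $k$. The equivalences in part (i) themselves are routine once the preservation of local-fieldness under the four relevant field operations is granted from the earlier propositions.
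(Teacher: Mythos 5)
Your part (i) follows the same basic route as the paper (a case analysis on Proposition~\ref{abc80}), but it has a gap in case (c) when $F$ is an \emph{octonion} division algebra. By Definition~\ref{abc0} a local field is a field or a skew field, so an octonion division algebra is never a local field; hence in that subcase assertion (i) amounts to showing that the center $k$ is \emph{not} a local field. Your argument (``the valuation on the relevant associative subrings restricts to $k$ \dots so the two conditions are equivalent'') does not deliver this -- if anything it points in the wrong direction, since it suggests local-fieldness would transfer from $k$ up to $F$. What is actually needed is the nonexistence of octonion division algebras over local fields, which the paper extracts from Proposition~\ref{localaltalgebras} (a locally compact nondiscrete octonion division algebra must be the real octonions, whose center is $\mathbb R$, not a local field). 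Without that input, case (c) of your part (i) is incomplete. The remaining cases (a), (b), (d) of your part (i) are fine and match the paper's use of Propositions~\ref{abc0a}, \ref{abc101} and Remark~\ref{abc0b}.

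For part (ii) you take a genuinely different route from the paper, and it is the weaker of the two. You try to recover $k$ from the invariant $F$ by a ``canonical operation,'' but in case (b) the subfield $k$ with $F/k$ separable quadratic is not determined by the abstract field $F$ alone -- a local field has many quadratic subfields -- so ``the given separable quadratic extension'' is smuggling in precisely the invariance you are trying to prove. The paper avoids this entirely: it applies the invariance statements of Notation~\ref{pop11} (i.e., \cite[30.29]{weiss3}) directly to the rank-2 residue $\Gamma$, whose defining field is $k$, and uses that $\Gamma$ itself is an invariant of $\Delta$; together with the observation that in the mixed case the two candidate fields are isomorphic local fields, this gives (ii) in one line. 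Your conclusion in each case is correct (and your handling of the mixed case via Proposition~\ref{abc101} is right), but the justification in case (b) should be replaced by the residue argument, and your identification of case (a) of Proposition~\ref{abc80} with type ${\sf A}_{\it l}$ is inaccurate -- $k=F$ occurs for many types -- though harmless, since the corollary only claims that at least the pair $\{k,k^{\mathrm{opp}}\}$ is an invariant.
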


\begin{proof}
By Propositions~\ref{localskewfieldclass} and~\ref{abc0a},
the center of a local field is a local field.
By Proposition~\ref{localaltalgebras}, there are no octonion division algebras over
a local field. Assertion (i) holds, therefore, by Propositions~\ref{abc0a} and~\ref{abc80}.
Assertion (ii) holds by Notation~\ref{pop11} applied to $\Gamma$
(since $\Gamma$ is an invariant of $\Delta$).
\end{proof}

\begin{proposition}\label{abc3}
Suppose that $\Delta$ is the building at infinity of a locally finite 
Bruhat-Tits building $X$, let $\Sigma$ be an apartment of $\Delta$,
let $\alpha$ be a root of $\Delta$ and let $\varphi$ be a valuation of 
the root datum of $\Delta$ based at $\Sigma$, which exists by the results
in the first two columns of \cite[Table~27.2]{weiss3} and is 
unique up to equipollence by \cite[3.41(iii)]{weiss3}.
Let the root group $U_\alpha$ be endowed with the metric $d_\alpha$ determined by
the map $\varphi_\alpha$ as described in \cite[17.3]{weiss3}.  
Then $U_\alpha$ is locally compact with respect to this metric.
\end{proposition}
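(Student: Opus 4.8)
*Suppose that $\Delta$ is the building at infinity of a locally finite Bruhat-Tits building $X$, let $\Sigma$ be an apartment of $\Delta$, let $\alpha$ be a root of $\Delta$ and let $\varphi$ be a valuation of the root datum of $\Delta$ based at $\Sigma$, which exists by the results in the first two columns of \cite[Table~27.2]{weiss3} and is unique up to equipollence by \cite[3.41(iii)]{weiss3}. Let the root group $U_\alpha$ be endowed with the metric $d_\alpha$ determined by the map $\varphi_\alpha$ as described in \cite[17.3]{weiss3}. Then $U_\alpha$ is locally compact with respect to this metric.*

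The plan is to produce a single compact neighborhood of the identity inside $U_\alpha$, from which local compactness follows by homogeneity of the topological group. Recall from \cite[17.3]{weiss3} that $d_\alpha$ is a left-invariant ultrametric which makes $U_\alpha$ a topological group, and that its balls at the identity are exactly the subgroups
$$U_{\alpha,r}=\{u\in U_\alpha : \varphi_\alpha(u)\geq r\}\cup\{1\},\qquad r\in\mathbb{R},$$
each of which is simultaneously open and closed and which together form a neighborhood basis at $1$. It therefore suffices to show that some fixed $U_{\alpha,r}$ is $d_\alpha$-compact. The hypothesis enters through Theorem~\ref{abc1}(i): since $\Delta$ is the building at infinity of a locally finite Bruhat--Tits building, the defining field $F$ is a local field, and hence so is the field $k$ of Notation~\ref{abc81} by Corollary~\ref{abc80a}(i). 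In particular $F$ and $k$ are complete with respect to a discrete valuation whose residue field is finite (Definition~\ref{abc0}).

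First I would verify completeness. The parameter system $\Xi$ of Notation~\ref{abc81} is in every case a finite-dimensional structure over $F$ (equivalently $k$), and under the parametrization of $U_+$ recorded there the value $\varphi_\alpha(u)$ is governed by the valuation of $F$ on the coordinates of $u$. A $d_\alpha$-Cauchy sequence therefore has Cauchy coordinate sequences, which converge because $F$ is complete; the limit again lies in $U_\alpha$. Thus $U_\alpha$, and hence its closed subgroup $U_{\alpha,r}$, is $d_\alpha$-complete. Next I would establish total boundedness of $U_{\alpha,r}$, which in the ultrametric setting is equivalent to the finiteness of $U_{\alpha,r}/U_{\alpha,s}$ for every $s>r$. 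The value set $\{\varphi_\alpha(u):u\in U_\alpha\setminus\{1\}\}$ is discrete because the valuation of $F$ is discrete, so only finitely many jump values $t$ occur in $[r,s)$; at each such $t$ the layer $U_{\alpha,t}/U_{\alpha,t^+}$ is a finite-dimensional space over the residue field of $F$ (or $k$), which is finite. Hence $U_{\alpha,r}/U_{\alpha,s}$ is finite. Since $\bigcap_s U_{\alpha,s}=\{1\}$, completeness gives $U_{\alpha,r}=\varprojlim_s U_{\alpha,r}/U_{\alpha,s}$, a profinite group and in particular compact; equivalently, $U_{\alpha,r}$ is complete and totally bounded. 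Therefore $U_\alpha$ is locally compact.

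The main obstacle is the identification in the second step of each layer $U_{\alpha,t}/U_{\alpha,t^+}$ as a finite-dimensional space over the finite residue field, as this is not uniform across the nine cases of Notation~\ref{abc81}. One must pass through the explicit descriptions of $U_+$ in terms of $\Xi$ given in \cite[16.1--16.9]{TW} and check, case by case, that every coordinate of $\Xi$ is finite-dimensional over the local field $F$ or $k$ and that $\varphi_\alpha$ restricts on each coordinate to a shift of the valuation of that local field. Once this bookkeeping is in place, the finiteness of the residue layers---and with it the compactness of $U_{\alpha,r}$---follows immediately from the finiteness of the residue field supplied by Corollary~\ref{abc80a}(i) together with Theorem~\ref{abc1}(i).
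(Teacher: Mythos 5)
Your proposal is correct and follows essentially the same route as the paper: one shows that the open subgroup $U_{\alpha,0}$ (your $U_{\alpha,r}$) is compact by combining completeness of $U_\alpha$ with respect to $d_\alpha$ with the finiteness of the quotients $U_{\alpha,k}/U_{\alpha,k+1}$. The only difference is one of sourcing: the two inputs you propose to verify by coordinate and case-by-case arguments are simply cited in the paper, namely completeness from \cite[17.9]{weiss3} together with Theorem~\ref{abc1}(i), and the finite-index property from \cite[18.20]{weiss3}, which disposes of what you call the main obstacle.
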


\begin{proof}
We write $U_\alpha$ additively even though it might not be abelian and let 
$$U_{\alpha,k}=\{u\in U_\alpha;\ \varphi_\alpha(u)\ge k\}$$
for each integer $k$ (where $\varphi_\alpha(0)=\infty$).
By \cite[18.20]{weiss3}, $U_{\alpha,k+1}$ is a subgroup of finite
index in $U_{\alpha,k}$ for each $k$. To show that $U_\alpha$ 
is locally compact, it will suffice to show
that the open subgroup $U_{\alpha,0}$ is compact. Let $(u_i)_{i\in{\mathbb N}}$ be
an infinite sequence of elements in $U_{\alpha,0}$. We can choose
$w_0\in U_{\alpha,0}$ such that $-w_0+u_i\in U_{\alpha,1}$
for infinitely many $i$. We can then choose $w_1\in U_{\alpha,1}$ such
that $-w_1-w_0+u_i\in U_{\alpha,2}$ for infinitely many $i$.
We continue in this fashion and set $z_m=w_0+w_1+\cdots+w_m$
for each $m\ge0$. By \cite[17.9]{weiss3} and
Theorem~\ref{abc1}(i), $U_\alpha$ 
is complete with respect to the metric $d_\alpha$. It follows that the sequence 
$(z_m)_{m\in{\mathbb N}}$ has
a limit $z$ in $U_\alpha$. In fact, $z\in U_{\alpha,0}$ since $U_{\alpha,0}$ is closed.
By the choice of the elements $w_j$,
the element $z$ is an accumulation point of the sequence $(u_i)_{i\in{\mathbb N}}$. 
We conclude that every infinite sequence in $U_{\alpha,0}$ has an accumulation point
in $U_{\alpha,0}$, from which it follows that $U_{\alpha,0}$ is compact.
\end{proof}

\begin{proposition}\label{abc98}
Suppose that $\Delta$ is the building at infinity of a 
Bruhat-Tits building $X$,
let $G^\dagger$ be the subgroup of $\mathrm{Aut}(\Delta)$ 
defined in Notation~{\rm\ref{abc45}} and let $B$ be 
the stabilizer in $G^\dagger$ of a chamber $c$ of $\Delta$, i.e., $B$
is a minimal parabolic subgroup of $G^\dagger$.
Then the following hold.
\begin{thmenum}
\item The group $G^\dagger$ is canonically isomorphic to a subgroup of $\mathrm{Aut}(X)$.
\item The subgroup $B$ acts transitively 
on the set of all special vertices of a fixed type in $X$.
\item $G^\dagger=KB$, where $K$ is the stabilizer in $G^\dagger$ of a special vertex
of $X$. $($This is the {\it Iwasawa decomposition} of $G^\dagger$.$\,)$
\item The subgroup $K$ acts transitively on the chambers of $\Delta$.
\end{thmenum}
\end{proposition}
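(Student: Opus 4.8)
The plan is to establish (i) first, then the key transitivity statement (ii), and finally to deduce (iii) and (iv) by short group-theoretic arguments.

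For (i), recall that by Theorem~\ref{abc1}(i) the defining field $F$ is a local field, so the root datum of $\Delta$ carries a valuation $\varphi$ (which exists and is unique up to equipollence, as in the discussion preceding Proposition~\ref{abc3}), and $X$ is exactly the affine building built from the pair $(\Delta,\varphi)$ as in \cite{weiss3}. In that construction each root group $U_\alpha$ of $\Delta=\partial X$ acts on $X$ by simplicial automorphisms, through its filtration $U_{\alpha,k}$, so the generators of $G^\dagger$ act on $X$ and we obtain a homomorphism $\rho\colon G^\dagger\to\mathrm{Aut}(X)$. Composing $\rho$ with the natural map $\mathrm{Aut}(X)\to\mathrm{Aut}(\partial X)=\mathrm{Aut}(\Delta)$ recovers the given inclusion $G^\dagger\hookrightarrow\mathrm{Aut}(\Delta)$; since that inclusion is injective, so is $\rho$, and $\rho$ identifies $G^\dagger$ with a subgroup of $\mathrm{Aut}(X)$. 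The identification is canonical because equipollent valuations yield the same building and the same action.

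The heart of the matter is (ii). Fix the chamber $c\in\Delta$ at infinity and an affine apartment $\Sigma$ of $X$ whose boundary $\partial\Sigma$ contains $c$, and let $x_0\in\Sigma$ be a special vertex of the prescribed type $t$. I argue in two steps. First, within $\Sigma$: the generators of $G^\dagger$ realise, via the $m$-maps of the valued root datum, the reflections of the affine Weyl group $W_{\mathrm{aff}}$ (the semidirect product of the finite Weyl group $W$ with the coroot lattice $Q^\vee$) on $\Sigma$, and products of reflections in parallel walls give translations by $Q^\vee$. Because $F$ is a local field its value group is $\mathbb Z$, so these translations realise all of $Q^\vee$; each such translation fixes $c$, since translations act trivially at infinity, and hence lies in $B$. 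As the special vertices of type $t$ in $\Sigma$ are precisely the $Q^\vee$-translates of $x_0$, this subgroup of $B$ acts transitively on them. Second, I reduce an arbitrary special vertex $x$ of type $t$ to $\Sigma$: choose sectors $S_x$ and $S_0$ in the direction $c$ based at $x$ and at $x_0$; since they represent the same chamber at infinity they share a common subsector $S'$, which therefore lies in $\Sigma$ and in some apartment $\Sigma_x\ni x$. The fixator in $G^\dagger$ of $S'$ lies in $B$, because it fixes the direction $c$, and acts transitively on the apartments containing $S'$ (a standard consequence of the strong transitivity of $G^\dagger$, cf.\ \cite{weiss3}); hence some $g\in B$ maps $\Sigma_x$ onto $\Sigma$, so $g(x)\in\Sigma$ is special of type $t$. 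Composing $g$ with the appropriate $Q^\vee$-translation in $B$ carries $x$ to $x_0$, proving (ii).

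Statements (iii) and (iv) are then formal. Since $G^\dagger$ is generated by root groups, it acts on $X$ by type-preserving automorphisms, its generators inducing elements of $W_{\mathrm{aff}}$ on the apartments they move, so for any $g\in G^\dagger$ the vertex $g(x_0)$ is special of type $t$; by (ii) there is $b\in B$ with $b(x_0)=g(x_0)$, whence $b^{-1}g\in K$ and $g\in bK\subseteq BK$. Thus $G^\dagger=BK$, and taking inverses gives the Iwasawa decomposition $G^\dagger=KB$ of (iii). For (iv), $G^\dagger$ acts strongly transitively, hence chamber transitively, on $\Delta$ by Corollary~\ref{abc46}, and $B$ is the stabiliser of $c$; therefore the set of all chambers equals $G^\dagger c=(KB)c=K(Bc)=Kc$, i.e.\ $K$ is transitive on the chambers of $\Delta$. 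The main obstacle is the first step of (ii): pinning down exactly which translations of $\Sigma$ lie in $G^\dagger$ and checking that they exhaust the type-$t$ special vertices. This rests on the fine structure of the valued root datum in \cite{weiss3}—the $m$-maps, the identification of the wall spacings with the value group $\mathbb Z$, and the resulting coroot lattice of translations—so the work here is to quote these facts in the precise form needed, while the apartment-reduction in the second step is comparatively routine.
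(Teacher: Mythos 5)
Your proposal is correct and follows essentially the same route as the paper: (i) via the canonical extension of the root‑group action to $X$ coming from the Bruhat--Tits construction (the paper cites \cite[12.31]{weiss3}), (ii) by combining transitivity of $B$ on the apartments of $X$ containing a sector in the class $c$ with the fact that the elements of $B$ normalizing a fixed such apartment induce enough of the affine Weyl group (the paper phrases this via $T=N\cap B$ and \cite[18.3(ii), 1.30]{weiss3}, you via translations by the coroot lattice --- the same content), and (iii), (iv) as the same formal consequences. The only difference is that you unpack arguments the paper delegates entirely to citations in \cite{weiss1} and \cite{weiss3}.
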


\begin{proof}
By \cite[12.31]{weiss3}, (i) holds and we can identify $B$ with a subgroup
of $\mathrm{Aut}(X)$. The chamber $c$ is a parallel class of sectors of $X$.
By \cite[11.12]{weiss1} and \cite[8.27]{weiss3},
$B$ acts transitively on the set of all apartments of $X$ containing $c$
(i.e.~containing a sector in the parallel class $c$).
By \cite[7.6]{weiss3}, every vertex of $X$ is contained in such an apartment.

Now let $A$ be an apartment of $X$ containing a sector in the parallel class $c$,
let $N$ denote the normalizer of $A$ in $G^\dagger$ and put $T=N\cap B$. 
By \cite[18.3(ii)]{weiss3}, $T$ contains all affine reflections of $A$.
By \cite[1.30]{weiss3}, it follows that $T$ acts transitively on the set
of special vertices of $A$ of any given type. Thus (ii) holds. This, in turn,
implies (iii) which then implies (iv). 
\end{proof}

\smallskip
\section{Compact totally disconnected buildings of rank~2}\label{111}

\noindent
In this section we prove the following result. It is the key step in the proof
of Theorem \ref{thm1}.

\begin{theorem}
\label{rank2classification}
Let $\Gamma$ be an infinite irreducible compact totally disconnected Moufang building
of rank ${\it l}=2$.
Let $G^\dagger$ be the group of automorphisms generated by all the root groups of $\Gamma$
and let $F$ be the defining field of $\Gamma$ as defined in Notation~{\rm\ref{pop11}}.
Then $F$ is a local field in the sense of Definition~{\rm\ref{abc0}}
and the topology on $\Gamma$ that makes it a compact building is unique.
\end{theorem}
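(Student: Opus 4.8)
The plan is to reduce the statement to the rank~1 situation already handled by Theorem~\ref{TopHua}, and then invoke the classification of locally compact fields. First I would observe that, since $\Gamma$ is a compact totally disconnected building, every panel of $\Gamma$ is, by Lemma~\ref{BasicLemmaOnCompactBuildings}(iii), a compact set, and by Proposition~\ref{pop2} it is homeomorphic to the Cantor set; in particular each panel is compact, totally disconnected and without isolated points. The key point is to exhibit a compact projective line $\hat{F}=(G,P)$ over the defining field $F$ inside the combinatorial structure of $\Gamma$. Concretely, by the description in Notation~\ref{abc81} of the root groups $U_1,\ldots,U_n$ and the parameter system $\Xi$, the set $F\cup\{\infty\}$ can be identified with a single panel of $\Gamma$ (the panels are naturally coordinatized by $F$, together with a point at infinity), and the maps $\tau_a$ and $i$ generating $G$ are realized by root group elements and by reflections in an apartment.

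Next I would verify that, under this identification, the topology the panel inherits as a subspace of the chamber set $C$ turns $P=F\cup\{\infty\}$ into a compact space on which $G$ acts by homeomorphisms. Compactness of $P$ is immediate from compactness of the panel. The crucial input here is Corollary~\ref{ElationsAreContinuous}, which guarantees that every element of every root group acts continuously on $\Gamma$; together with Corollary~\ref{projContinuous} this shows the generators $\tau_a$ and $i$ of $G$ restrict to homeomorphisms of the panel $P$. Hence $\hat{F}=(G,P)$ is a compact projective line in the sense defined just before Theorem~\ref{TopHua}. Applying that theorem, $F$ (with its subspace topology inside $P$) is a locally compact, nondiscrete, $\sigma$-compact field, skew field, or alternative division ring. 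Since $\Gamma$ is assumed totally disconnected, the panel and hence $F$ is totally disconnected; by the Remarks~\ref{abc90} following Theorem~\ref{TopHua}, $F$ is therefore associative, and by Proposition~\ref{localaltalgebras} it cannot be an octonion algebra. Thus $F$ is a totally disconnected locally compact nondiscrete field or skew field.

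To conclude that $F$ is a local field in the sense of Definition~\ref{abc0}, I would appeal to Proposition~\ref{abc0a}: a totally disconnected member of the classification in Propositions~\ref{locfieldclass} and~\ref{localskewfieldclass} has center $\neq\mathbb{R},\mathbb{C}$ (these being connected), and every such field or skew field is exactly a local field. This settles the first assertion. For the uniqueness of the compact topology, I would argue as follows: by Corollary~\ref{pop1} the topology on $\Gamma$ is determined by its restriction to the panels, so it suffices to show the panel topology is unique. The panel topology corresponds precisely to the locally compact topology on $F=P\setminus\{\infty\}$ via the coordinatization above, and by Proposition~\ref{abc65} the locally compact topology on a nondiscrete field or skew field is unique (the exceptional case $F=\mathbb{C}$ does not arise, as $\mathbb{C}$ is connected). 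Hence the panel topology, and therefore the topology on $\Gamma$, is unique.

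The main obstacle I anticipate is the bookkeeping in the first step: making precise the identification of a panel of $\Gamma$ with the projective line $P=F\cup\{\infty\}$ and checking that the group $G$ generated by $\tau_a$ and $i$ is realized faithfully by root group elements and reflections acting on that panel. This requires tracing through the coordinatization of Moufang polygons in Notation~\ref{abc81} and verifying, case by case or uniformly, that the defining field $F$ appears as the additive and multiplicative structure on a panel in the expected way, using Proposition~\ref{abc80} to relate the parameter field $k$ to the defining field $F$. Once the dictionary between the rank~2 combinatorics and the rank~1 projective line is in place, the analytic content is entirely supplied by Theorem~\ref{TopHua} and the classification results, and the remainder is routine.
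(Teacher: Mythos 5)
Your overall strategy for the first assertion --- embed a compact projective line $\hat k$ in a panel and apply Theorem~\ref{TopHua} --- is exactly the paper's (Propositions~\ref{com1x}--\ref{com7}). But the identification you defer as ``bookkeeping'' is where real work, and in some cases a different object, is needed. A panel minus a point is homeomorphic to a root group $U_1$ or $U_n$ (Proposition~\ref{com1x}), and this root group is parameterized by a field only in some of the nine cases of Notation~\ref{abc81}. In Cases~${\mathcal Q}_{\mathcal E}$ and~${\mathcal Q}_{\mathcal F}$ the root group is strictly larger than $k$, and the candidate projective line is the proper subset $\{\infty\}\cup 0^{x_1(0,k)}$ of the panel; its compactness is not immediate from compactness of the panel but requires showing that $x_1(0,k)$ is closed in $U_1$ (Proposition~\ref{com2}). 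In Case~${\mathcal Q}_{\mathcal D}$ the root group is parameterized by $k_0$, which is not a field at all, and one must manufacture a closed subfield $M$ with $k^2\subseteq M\subseteq k_0$ (Proposition~\ref{com6}), prove that $M$ is a local field, and only then deduce that $k$ is one. Moreover the inversion $i$ is realized by a $\mu$-element only after a Hua-type computation (Proposition~\ref{com4}), and the passage from the parameter structure $k$ to the defining field $F$ uses Corollary~\ref{abc80a}. All of this is repairable and in the spirit of your outline, but it is not mere dictionary-building.

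The genuine gap is in your uniqueness argument. Corollary~\ref{pop1} reduces uniqueness of the topology to uniqueness on the panels, but a rank~$2$ building has two types of panels, and your projective-line construction pins down the topology only on the panel coordinatized by $k$. The opposite-type panel is coordinatized by a different structure (a quadratic space $L$ in Case~${\mathcal Q}_{\mathcal Q}$, a pseudo-quadratic module in Case~${\mathcal Q}_{\mathcal P}$, a hexagonal system $J$ in Case~${\mathcal H}$, and so on), and the uniqueness of the locally compact topology on $k$ (Proposition~\ref{abc65}) says nothing a priori about the topology on that second panel. This is precisely what most of the paper's Proposition~\ref{com9} is devoted to: after first ruling out the cases ${\mathcal Q}_{\mathcal D}$, ${\mathcal Q}_{\mathcal E}$, ${\mathcal Q}_{\mathcal F}$ and ${\mathcal O}$ via Theorem~\ref{abc1}, it uses the valuation $\varphi$ of the root datum (unique up to equipollence), commutator identities producing continuous maps such as $x_4(u)\mapsto x_1(q(u))$, and the open mapping theorem (Corollary~\ref{open}) to show that the given topology on the second root group coincides with the valuation topology $T_\varphi$, which depends only on $T_k$. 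Without an argument of this kind your proof establishes uniqueness only on half of the panels, which is not enough to invoke Corollary~\ref{pop1}.
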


The building $\Gamma$ in Theorem~\ref{rank2classification} is a graph whose edges are the chambers
and whose vertices are the panels. For each vertex $x$, we denote by $\Gamma_x$ the set of
vertices adjacent to $x$ and we identify
the set of chambers (i.e., edges) containing $x$ with $\Gamma_x$
via the map $\{x,y\}\mapsto y$.

We fix an apartment $\Sigma$ of $\Gamma$. 
Let $n$, 
the labeling of the vertices of $\Sigma$ 
and $U_i$ for $i\in{\mathbb Z}_{2n}$ be as in Notation~\ref{abc70}
and let $k$ and $\Xi$ be as in Notation~\ref{abc81}. 

We recall from Theorem~\ref{AutIsMetrizable} that the group $\mathrm{Auttop}(\Gamma)$ of topological
automorphisms of $\Gamma$ is locally compact, metrizable, second countable, and in
particular $\sigma$-compact. 

\begin{proposition}\label{abc6a}
The subgroup $U_i$ is a locally compact subgroup of $\mathrm{Auttop}(\Gamma)$ for all
$i\in[1,n]$. 
\end{proposition}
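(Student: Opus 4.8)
The plan is to show that each root group $U_i$, for $i\in[1,n]$, is locally compact by exhibiting a compact neighborhood of the identity. Since $\mathrm{Auttop}(\Gamma)$ is a Hausdorff topological group (by Theorem~\ref{AutIsMetrizable}) and each $U_i$ is a subgroup of it (by Corollary~\ref{ElationsAreContinuous}, every element of every root group is continuous, so $U_i\subseteq\mathrm{Auttop}(\Gamma)$), it suffices to produce one compact neighborhood of the identity inside $U_i$ in the subspace topology. The key geometric fact I would use is that $U_i$ acts \emph{simply transitively} on a well-understood set of chambers: by Proposition~\ref{abc4}(ii) the single root group $U_i$ is the pointwise stabilizer of its fixed-point set, and by the Moufang condition together with the standard commutator theory for Moufang polygons (as in \cite[17.1--17.7]{TW} and Notation~\ref{abc81}), $U_i$ acts regularly on the set of chambers (edges) through a fixed vertex, other than one distinguished chamber lying in the apartment $\Sigma$.

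First I would fix the vertex $v$ of $\Sigma$ incident to the relevant panel and identify, via the regular action, the root group $U_i$ with the punctured panel $\Gamma_v\setminus\{e\}$, where $e$ is the edge of $\Sigma$ through $v$; concretely, $u\mapsto u(e)$ is a bijection from $U_i$ onto $\Gamma_v\setminus\{e\}$. By Lemma~\ref{BasicLemmaOnCompactBuildings}(iii), the panel $\Gamma_v$, i.e.\ the set of chambers through $v$, is a \emph{compact} subset of $C$. Thus the punctured panel $\Gamma_v\setminus\{e\}$ is a locally compact space: it is an open subset of the compact Hausdorff space $\Gamma_v$ (the complement of the single closed point $\{e\}$), and open subsets of locally compact Hausdorff spaces are locally compact. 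The map $u\mapsto u(e)$ is continuous because the evaluation of automorphisms on chambers is continuous in the compact-open topology. So the plan reduces to promoting this continuous bijection to a homeomorphism.

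The main obstacle will be exactly this last point: continuity and bijectivity of the orbit map do not by themselves give that the compact-open topology on $U_i$ agrees with the topology pulled back from the punctured panel. To bridge this I would invoke the machinery already assembled: $\mathrm{Auttop}(\Gamma)$ is $\sigma$-compact (Theorem~\ref{AutIsMetrizable}), and $U_i$ is a \emph{closed} subgroup of it — closed because it is the pointwise stabilizer of its fixed-point set (Proposition~\ref{abc4}(ii)) and stabilizers of points are closed in the compact-open topology — hence $U_i$ is itself a $\sigma$-compact locally compact group once we know local compactness, which is what we are proving. The clean route is: show the orbit map $U_i\to\Gamma_v$, $u\mapsto u(e)$, is a continuous injection of a $\sigma$-compact space; since $\Gamma_v$ is compact metrizable (Proposition~\ref{CompactBuildingsMetrizable}), the image is a continuous image and the map restricted to any compact subset of $U_i$ is a homeomorphism onto its image. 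Then a compact identity neighborhood is obtained by pulling back a compact neighborhood of $e$ in $\Gamma_v\setminus\{e\}$ and checking, via the reconstruction of automorphisms from their action on a single panel (as in the preceding Proposition and Corollary~\ref{projContinuous}), that the inverse orbit map is continuous on that neighborhood.

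Concretely, I would argue that the inverse of the orbit map is continuous as follows. Given a chamber $d=u(e)\in\Gamma_v$, the automorphism $u\in U_i$ is determined on all of $\Gamma$ by its action on the panels, and by Corollary~\ref{projContinuous} and Proposition~\ref{Coordinates} this reconstruction is continuous in $d$. Hence $d\mapsto u$ is continuous, so the orbit map is a homeomorphism from $U_i$ onto $\Gamma_v\setminus\{e\}$. Since the latter is locally compact, so is $U_i$. The one delicate point I would flag is verifying that the reconstruction map is genuinely continuous \emph{into the compact-open topology on $U_i$}, i.e.\ that convergence of the defining chamber $d$ forces uniform convergence of the automorphisms on compact sets of chambers; this is where the continuity results for projections and for galleries (Lemma~\ref{GalleriesAreContinuous}, Corollary~\ref{projContinuous}) do the real work, propagating continuity on one panel to continuity everywhere.
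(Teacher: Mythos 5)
Your main line of argument — identifying $U_i$ with the punctured panel via the orbit map and deducing local compactness from that of the punctured panel — has a genuine gap, and it is exactly the one you flag: the continuity of the \emph{inverse} of the orbit map. The natural tool for that, Theorem~\ref{OpenAction}, requires the acting group to be locally compact and $\sigma$-compact, which is precisely what you are trying to prove, so invoking it here is circular. The paper runs the logic in the opposite order: it first establishes Proposition~\ref{abc6a} and only then (in Proposition~\ref{com1x}) uses it, together with Theorem~\ref{OpenAction}, to conclude that $u\mapsto 0^u$ is a homeomorphism onto the punctured panel. Your proposed substitute — ``reconstructing'' $u$ continuously from the single chamber $u(e)$ via projections — is never actually carried out, and making it work amounts to proving a statement at least as strong as the one you are after; as written it is an assertion, not an argument.

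The irony is that you already wrote down a complete and correct proof in passing and did not notice it. In your third paragraph you observe that $U_i\subseteq\mathrm{Auttop}(\Gamma)$ (by Corollary~\ref{ElationsAreContinuous}) and that $U_i$ is \emph{closed} in $\mathrm{Auttop}(\Gamma)$, being the pointwise stabilizer of its fixed-point set by Proposition~\ref{abc4}(ii) with $j=0$. That finishes the proof: Theorem~\ref{AutIsMetrizable} says $\mathrm{Auttop}(\Gamma)$ is locally compact, and a closed subgroup (indeed any closed subspace) of a locally compact Hausdorff group is locally compact. This two-line argument is the paper's proof verbatim; everything involving the panel, the orbit map, Lemma~\ref{GalleriesAreContinuous} and Corollary~\ref{projContinuous} is unnecessary for this proposition and should be deleted.
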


\begin{proof}
Let $i\in[1,n]$. By \ref{ElationsAreContinuous}, $U_i\subset\mathrm{Auttop}(\Gamma)$.
By Proposition~\ref{abc4}(ii) (with $j=0$) and Theorem~\ref{AutIsMetrizable}, 
therefore, $U_i$ is a closed subgroup of a locally compact group.
\end{proof}

From now on we use exponential notation to indicate the action of a group on 
a set; see \cite[11.9]{weiss1}.

\begin{proposition}\label{com1x}
The map $u\mapsto0^u$ is a homeomorphism from $U_1$ to $\Gamma_1\backslash\{2\}$
and the map $u\mapsto(n-1)^u$ is a homeomorphism from $U_n$ to $\Gamma_n\backslash\{n+1\}$.
\end{proposition}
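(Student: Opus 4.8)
The plan is to prove the first statement; the second then follows by the identical argument applied to the root $\alpha_n$, whose boundary panels are $n$ and $0=2n$, with the chambers $\{n,n+1\}$ and $\{n,n-1\}$ at panel $n$ playing the roles of $\{1,2\}$ and $\{1,0\}$ at panel $1$.

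First I would record the underlying algebraic fact. Since $\alpha_1\cap\Gamma=(1,2,\ldots,n+1)$, the group $U_1$ fixes this path pointwise; in particular it fixes the vertex $1$ and the chamber $\{1,2\}$, so it fixes $2\in\Gamma_1$ and leaves the panel $\Gamma_1$ invariant. By the Moufang condition (Definition~\ref{thm1x}) and the standard theory of Moufang $n$-gons \cite[11.2]{weiss1}, $U_1$ acts simply transitively on the apartments of $\Gamma$ containing $\alpha_1$. Each such apartment $A$ meets the panel $1$ in exactly one chamber outside $\alpha_1$, namely $\{0_A,1\}$ with $0_A\in\Gamma_1\setminus\{2\}$, and the assignment $A\mapsto 0_A$ is a bijection onto $\Gamma_1\setminus\{2\}$: surjectivity follows from (B1) applied to the chambers $\{1,y\}$ and $\{n,n+1\}$, and injectivity follows because the remaining vertices $n+2,\ldots,2n-1$ of $A$ lie on the unique geodesic joining $n+1$ to $0_A$, these being at distance $n-1<n$ in $\Gamma$. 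Transporting the simply transitive action through this bijection shows that $u\mapsto 0^u$ is a bijection from $U_1$ onto $\Gamma_1\setminus\{2\}$ whose point stabilizer is trivial; faithfulness of $U_1$ on $\Gamma$ is guaranteed by Proposition~\ref{abc4}(i).

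It then remains to upgrade this bijection to a homeomorphism, and here I would invoke Theorem~\ref{OpenAction}. The group $U_1$ is locally compact by Proposition~\ref{abc6a}; being a closed subgroup of the second countable locally compact group $\mathrm{Auttop}(\Gamma)$ (Theorem~\ref{AutIsMetrizable}), it is itself second countable and hence $\sigma$-compact. The panel $\Gamma_1$ is compact by Lemma~\ref{BasicLemmaOnCompactBuildings}(iii), so the $U_1$-invariant subset $\Gamma_1\setminus\{2\}$ is open in a compact Hausdorff space and therefore locally compact. The action $U_1\times(\Gamma_1\setminus\{2\})\to\Gamma_1\setminus\{2\}$ is continuous, being the restriction of the jointly continuous evaluation map $\mathrm{Auttop}(\Gamma)\times C\to C$ for the compact-open topology, and it is transitive by the previous paragraph. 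Theorem~\ref{OpenAction} now yields a homeomorphism $U_1/H\to\Gamma_1\setminus\{2\}$, where $H$ is the stabilizer of $0$; since $H$ is trivial, $u\mapsto 0^u$ is the desired homeomorphism.

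The main obstacle is the verification of simple transitivity with trivial stabilizer, that is, that $u\mapsto0^u$ is genuinely a bijection onto $\Gamma_1\setminus\{2\}$; once this and the routine continuity of the evaluation map are established, the passage from bijection to homeomorphism is an immediate application of the open action theorem, so that no separate argument for continuity of the inverse map is required.
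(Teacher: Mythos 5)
Your proposal is correct and follows essentially the same route as the paper: the paper likewise combines simple transitivity of $U_1$ on $\Gamma_1\setminus\{2\}$ (citing \cite[3.7]{TW} for the trivial stabilizer), local compactness of $U_1$ from Proposition~\ref{abc6a}, $\sigma$-compactness from Theorem~\ref{AutIsMetrizable}, and Theorem~\ref{OpenAction} to conclude. Your additional verification of the bijection between apartments containing $\alpha_1$ and $\Gamma_1\setminus\{2\}$ just spells out what the paper delegates to the cited reference.
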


\begin{proof}
The closed subgroup $U_1$ is $\sigma$-compact (by Theorem~\ref{AutIsMetrizable})
and the stabilizer in $U_1$ of the vertex $0$ is trivial (by \cite[3.7]{TW}). By
Theorem~\ref{OpenAction} and Proposition~\ref{abc6a}, it follows that
the map $u\mapsto0^u$ is a homeomorphism from $U_1$ to $\Gamma_1\backslash\{2\}$.
Thus the first assertion holds. The proof of the second assertion is virtually the same.
\end{proof}

\begin{notation}\label{com2a}
Let the cases 
${\mathcal T}$, ${\mathcal Q}_{\mathcal I}$,
${\mathcal Q}_{\mathcal Q}$, ${\mathcal Q}_{\mathcal D}$, ${\mathcal Q}_{\mathcal P}$,
${\mathcal Q}_{\mathcal E}$, ${\mathcal Q}_{\mathcal F}$, ${\mathcal H}$
and ${\mathcal O}$ and the parameter system $\Xi$ be as in Notation~\ref{abc81}.
We use $x_i$ to denote the various isomorphisms from the appropriate algebraic
structures to $U_i$ for all $i\in[1,n]$ that appear in \cite[16.1--16.9]{TW}.
In particular, the following hold.
\begin{thmenum}
\item $x_1$ is an isomorphism from $k_0$ to $U_1$ and $x_4$ is an isomorphism
from $L_0$ to $U_4$ in Case~${\mathcal Q}_{\mathcal D}$, where
$\Xi$ is the indifferent set $(k,k_0,L_0)$;
\item $x_1$ is the isomorphism
from the group called $S$ in \cite[16.6]{TW} to $U_1$ in Case~${\mathcal Q}_{\mathcal E}$; and
\item $x_1$ is the isomorphism from the group called $X_0\oplus K$ 
in \cite[16.7]{TW} (with $K$ here replaced by $k$) to $U_1$ in 
Case~${\mathcal Q}_{\mathcal F}$.
\end{thmenum}
In both (ii) and (iii), the map $t\mapsto x_1(0,t)$ is an isomorphism from the 
additive group of $k$ into $U_1$. 
\end{notation}

\begin{definition}\label{com2g}
In Case~${\mathcal Q}_{\mathcal D}$, where $\Xi=(k,k_0,L_0)$, we equip
$k_0$ with the unique topology that makes the isomorphism $x_1$ into
a homeomorphism from $k_0$ to $U_1$. 
In the remaining cases, we equip $k$ with the unique topology such that the following hold.
\begin{thmenum}
\item $x_1$ is a homeomorphism from $k$ to $U_1$
in Cases~${\mathcal T}$, ${\mathcal Q}_{\mathcal Q}$ and~${\mathcal O}$;
\item $x_n$ is a homeomorphism from $k$ to $U_n$ 
in Cases~${\mathcal Q}_{\mathcal I}$, ${\mathcal Q}_{\mathcal P}$ and~${\mathcal H}$;
\item the map $t\mapsto x_1(0,t)$ is a homeomorphism from $k$ to the subset
$x_1(0,k)$ of $U_1$ in Cases~${\mathcal Q}_{\mathcal E}$ and~${\mathcal Q}_{\mathcal F}$.
\end{thmenum}
\end{definition}

\begin{notation}\label{com1}
Let $\infty$, $P$ and $\langle t\rangle$ be as follows:
\begin{thmenum}
\item In Cases~${\mathcal Q}_{\mathcal I}$, ${\mathcal Q}_{\mathcal P}$ and~${\mathcal H}$,
let $\infty=n+1$, $P=\Gamma_n$ and $\langle t\rangle=(n-1)^{x_n(t)}$ for all $t\in k$.
\item In Cases~${\mathcal T}$,
${\mathcal Q}_{\mathcal Q}$ and ${\mathcal O}$,
let $\infty=2$, $P=\Gamma_1$ and $\langle t\rangle=0^{x_1(t)}$ for all $t\in k$.
\item In Case~${\mathcal Q}_{\mathcal D}$, where
$\Xi$ is an indifferent set $(k,k_0,L_0)$, let $\infty=2$, $P=\Gamma_1$ and
$\langle t\rangle=0^{x_1(t)}$ for all $t\in k_0$.
\item In Cases~${\mathcal Q}_{\mathcal E}$ and~${\mathcal Q}_{\mathcal F}$, let
$\infty=2$, $P=\Gamma_1$ and $\langle t\rangle=0^{x_1(0,t)}$ for all $t\in k$.
\end{thmenum}
\end{notation}

\begin{proposition}\label{com4}
Suppose that $\Gamma$ is neither in case
${\mathcal Q}_{\mathcal E}$ nor in case ${\mathcal Q}_{\mathcal F}$.
Let $P$ and $\langle t\rangle$ be as in Notation~{\rm\ref{com1}} and 
let $\tau$ be the map from $P$ to itself that interchanges $\infty$
and $\langle 0\rangle$ and maps $\langle t\rangle$ to $\langle-t^{-1}\rangle$
for all $t\in k^*$ $($respectively, $t\in k_0^*$ in case ${\mathcal Q}_{\mathcal D}$$\,)$.
Then $\tau$ is a homeomorphism.
\end{proposition}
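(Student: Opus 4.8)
The map $\tau$ is essentially the action of the nontrivial element $\mu$ of the corresponding rank-one torus (a reflection swapping the two ends of the apartment in the panel $P$), so the strategy is to realize $\tau$ as a concrete automorphism of $\Gamma$ built from the root groups and then invoke continuity of those root-group elements. First I would recall that the panel $P$ is identified with $\Gamma_1$ (or $\Gamma_n$), which is compact by Lemma~\ref{BasicLemmaOnCompactBuildings}(iii), so it suffices to exhibit $\tau$ as a continuous bijection, since a continuous bijection of a compact Hausdorff space to itself is automatically a homeomorphism. By Proposition~\ref{com1x}, the parametrization $t\mapsto\langle t\rangle$ is a homeomorphism from $k$ (respectively $k_0$) onto $P\setminus\{\infty\}$, so the content is that, under this identification, the reflection $\tau$ acts as the ``projective inversion'' $t\mapsto-t^{-1}$ together with the swap $\infty\leftrightarrow\langle 0\rangle$.

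The key step is to identify $\tau$ group-theoretically. By the standard commutator/Moufang relations for rank-two buildings (the relations in \cite[16.1--16.9]{TW} defining the $U_i$ together with the defining relation for the element $\mu(u)=\mu_{\alpha}(u)$ as in \cite[6.1]{TW}), the element $\mu$ inducing the reflection in the panel $P$ can be written as a product $x_1(t)\,x_{?}(s)\,x_1(t')$ of root-group elements, and its action on the extremal vertices of the panel is precisely $\langle t\rangle\mapsto\langle-t^{-1}\rangle$ with $\infty\leftrightarrow\langle 0\rangle$. The precise recipe is the $\mu$-map formula in \cite[Chapter~6]{TW}, and the point is that for Moufang $n$-gons this $\mu$ acts on the ``coordinatizing'' panel by $t\mapsto -t^{-1}$ (the Hua-type identity). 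Having so expressed $\tau$ as the restriction to $P$ of an honest automorphism of $\Gamma$, continuity is immediate: each $x_i$-element lies in a root group, and by Corollary~\ref{ElationsAreContinuous} every element of every root group acts continuously on $\Gamma$; hence $\mu$ acts continuously, and in particular its restriction $\tau$ to the compact panel $P$ is continuous. Since $\mu$ is an automorphism of $\Gamma$, $\tau$ is bijective on $P$.

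The hard part will be the bookkeeping across the nine cases of Notation~\ref{abc81}: one must check that in each case the reflection $\mu$ really does act on the distinguished panel by the formula $t\mapsto -t^{-1}$ (in case ${\mathcal Q}_{\mathcal D}$, acting on $k_0$ rather than $k$), and that the labeling of ends $\infty,\langle 0\rangle$ in Notation~\ref{com1} matches the two fixed/swapped vertices of that reflection. This is exactly why cases ${\mathcal Q}_{\mathcal E}$ and ${\mathcal Q}_{\mathcal F}$ are excluded here: there the natural parameter $t$ sits inside a larger root group $U_1=x_1(X_0\oplus k)$ via the subgroup $x_1(0,k)$, so the reflection does not restrict to a single involution of the parametrizing subset in the same clean way, and a separate argument is needed. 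For the seven remaining cases I would simply cite the relevant $\mu$-map computation in \cite{TW} (the commutation relations giving $\langle t\rangle^{\mu}=\langle-t^{-1}\rangle$), and then conclude as above.

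\medskip\noindent\emph{Summary of the argument.}\enspace
$\tau$ is the restriction to the compact panel $P$ of the automorphism $\mu=\mu_{\alpha}(x_1(1))$ of $\Gamma$; its action on $P$ is $t\mapsto-t^{-1}$ and $\infty\leftrightarrow\langle0\rangle$ by the Moufang commutator relations; $\mu$ is continuous by Corollary~\ref{ElationsAreContinuous}; and a continuous bijection of the compact Hausdorff space $P$ is a homeomorphism.
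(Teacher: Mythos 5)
Your proposal is correct and takes essentially the same route as the paper: the paper likewise realizes $\tau$ as the action on $P$ of $m=\mu(x_1(1))$, which by \cite[6.1]{TW} lies in $U_{n+1}^*x_1(1)U_{n+1}^*$ and is therefore a product of root-group elements, continuous by Corollary~\ref{ElationsAreContinuous}, and then verifies $\langle t^{-1}\rangle^m=\langle -t\rangle$ explicitly from the identities in \cite[Ch.~32]{TW}. The only cosmetic difference is that the paper deduces that $\tau$ is a homeomorphism from $\tau=\tau^{-1}$ rather than from the compactness of $P$.
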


\begin{proof} Suppose that we are in case ${\mathcal T}$, 
${\mathcal Q}_{\mathcal Q}$, ${\mathcal Q}_{\mathcal D}$ 
or ${\mathcal O}$. (In case ${\mathcal Q}_{\mathcal D}$,
where $\Xi=(k,k_0,L_0)$, ``$t\in k$'' is to be read as 
``$t\in k_0$'' everywhere in this proof.) By \cite[6.1]{TW},
there exists for each $t\in k^*$ a unique element $\mu(x_1(t))$
in the double coset $U_{n+1}^*x_1(t)U_{n+1}^*$ that maps $\Sigma$
to itself and induces on $\Sigma$ the reflection with fixed points
$1$ and $n+1$. The element $\mu(x_1(t))$ maps $P$ to itself
and interchanges $\infty$ and $\langle 0\rangle$ for all $t\in k^*$. Let 
$m=\mu(x_1(1))$. Then all the identities in \cite[32.5, 32.7, 32.8 and 32.12]{TW}
as well as
$$x_{n+1}(t)=x_1(t)^m$$
hold for all $t\in k$ (as explained in the introduction 
to \cite[Chapter~32]{TW}). In particular,
$$\mu(x_1(t))=x_{n+1}(t^{-1})x_1(t)x_{n+1}(t^{-1})$$
for all $t\in k^*$. Choose $t\in k^*$. Since both $m$ and $\mu(x_1(t))$
interchange $\langle 0\rangle$ and $\infty$ and $U_{n+1}$ fixes $\langle 0\rangle$,
we have
\begin{align*}
\langle t^{-1}\rangle^{mx_1(t)}=\langle0\rangle^{x_1(t^{-1})mx_1(t)}
&=\infty^{m^{-1}x_1(t^{-1})mx_1(t)}\cr
&=\infty^{x_1(t^{-1})^mx_1(t)}\cr
&=\infty^{x_{n+1}(t^{-1})x_1(t)}\cr
&=\infty^{\mu(x_1(t))x_{n+1}(t^{-1})^{-1}}\cr
&=\langle0\rangle^{x_{n+1}(t^{-1})^{-1}}=\langle0\rangle
\end{align*}
and therefore
$$\langle t^{-1}\rangle^m=\langle 0\rangle^{x_1(t)^{-1}}=\langle -t\rangle.$$
Thus $m$ induces the map $\tau$ on $P$. Hence (by 
\ref{ElationsAreContinuous}) $\tau$ is continuous. Since $\tau=\tau^{-1}$, 
it is, in fact, a homeomorphism.
The argument in the remaining cases is virtually identical, only with
$x_n(t)$ in place of $x_1(t)$, $x_n(t^{-1})$ in place of $x_1(t^{-1})$
and $x_0(t^{-1})$ in place of $x_{n+1}(t^{-1})$.
\end{proof}

\begin{proposition}\label{com6}
In Case~${\mathcal Q}_{\mathcal D}$, where $\Xi=(k,k_0,L_0)$, 
there exists a subfield $M$
of $k_0$ containing $k^2$ such that $x_1(M)$ is a 
closed subset of $U_1$, where $x_1$ is the isomorphism from $k_0$ to $U_1$
indicated in Notation~{\rm\ref{com2a}}.
\end{proposition}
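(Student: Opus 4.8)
The plan is to locate, inside the indifferent set $\Xi=(k,k_0,L_0)$ of Case~${\mathcal Q}_{\mathcal D}$, a subfield $M$ of $k_0$ that contains $k^2$ and carries a topology compatible with the subspace topology on $U_1$. First I would recall the defining structure of an indifferent set (as cited from \cite[16.4]{TW} and the surrounding material): $k$ is a commutative field of characteristic~$2$, $k_0$ is an additive subgroup of $k$ containing $1$, and $L_0$ is an additive subgroup of $k$, subject to the compatibility conditions $k^2 k_0\subseteq k_0$, $k^2 L_0 \subseteq L_0$ and $k_0 L_0\subseteq L_0$ (after normalizing so that $1\in k_0\cap L_0$, which is permitted since by Notation~\ref{abc81} we may assume the set is proper). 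The natural candidate for $M$ is the subfield of $k$ generated by $k_0$, or more conservatively the set $k^2[k_0]$ of $k^2$-linear combinations of products of elements of $k_0$; since $\mathrm{char}(k)=2$ and $k^2 k_0\subseteq k_0$, such a set is readily seen to be closed under addition and multiplication and to contain $k^2$, so the algebraic content is that it is a field.

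Once $M$ is identified algebraically, the topological content is the assertion that $x_1(M)$ is closed in $U_1$. Here I would use Definition~\ref{com2g}, which equips $k_0$ with the unique topology making $x_1\colon k_0\to U_1$ a homeomorphism; thus it suffices to show that $M$ (or rather $M\cap k_0$, which should equal $M$ if $M\subseteq k_0$) is a closed subset of $k_0$ in this topology. The key external input is Step~5 of Theorem~\ref{TopHua}: in characteristic~$2$ a center-type subfield is closed, and more to the point, the Frobenius map is a homeomorphism onto its image. Since $k^2$ is the image of $k$ under the (homeomorphic) Frobenius and $U_1$ is already known to be locally compact by Proposition~\ref{abc6a}, the subgroup $U_{\alpha,0}$ arguments and the completeness supplied by Proposition~\ref{abc3} (via Theorem~\ref{abc1}(i), once we know $F$ is local) let one express $M$ as an intersection of closed conditions.

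The main obstacle I expect is precisely the interplay between the algebraic description of $M$ and its closedness: one must verify that the field generated by $k_0$ does not ``spread out'' densely in $k$ but instead sits as a genuinely closed subset. Concretely, I would argue that $x_1(M)$ is closed by exhibiting it as the fixed-point set, or more likely the kernel/image of a continuous endomorphism built from the root-group commutator relations of Case~${\mathcal Q}_{\mathcal D}$ recorded in \cite[16.4]{TW}. The indifferent quadrangle is exactly the mixed case of Proposition~\ref{abc80}(iv), so $F^p\subset k\subset F$ or $k^p\subset F\subset k$ with $p=2$; combining this with Proposition~\ref{abc101}, which forces $[F:F^2]=2$ once $F$ is an infinite nondiscrete locally compact field, collapses the lattice of intermediate fields to a finite chain and pins $M$ down as one of finitely many explicitly closed subfields. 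That finiteness is what ultimately guarantees closedness.

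The cleanest route, therefore, is: (1) normalize $\Xi$ to be proper and write down the field $M:=k^2(k_0)$ generated over $k^2$ by $k_0$; (2) check algebraically that $M$ is a subfield of $k_0$ containing $k^2$, using $\mathrm{char}(k)=2$ and $k^2 k_0\subseteq k_0$; and (3) prove $x_1(M)$ is closed in $U_1$ by transporting the topology back to $k_0$ via Definition~\ref{com2g} and invoking the homeomorphism properties of Frobenius from Step~5 together with the finite-index subgroup structure of Proposition~\ref{abc3}. The subtle step is (3), where I must be careful that the topology on $k_0$ coming from $U_1$ restricts correctly to $M$; I would handle this by showing $M$ equals an intersection $\bigcap_k(M+U_{\alpha,k})$ of sets each closed by local compactness, so that completeness forces $x_1(M)$ itself to be closed.
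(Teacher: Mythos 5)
Your proposal has two genuine gaps, one algebraic and one topological. Algebraically, your candidate $M=k^2[k_0]$ (or $k^2(k_0)$) cannot work: in an indifferent set $k_0$ is only an additive subgroup of $k$ that generates $k$ as a ring, so the ring generated by $k^2$ and $k_0$ contains all products of elements of $k_0$ and is in general all of $k$; it is therefore not a subfield \emph{of} $k_0$, which is what the statement requires. Your own hedge ``$M\cap k_0$, which should equal $M$ if $M\subseteq k_0$'' is exactly where this breaks. The paper instead takes $M=\{t\in L_0;\ tL_0\subset L_0\}$, the multiplier ring of $L_0$ inside $L_0$; this is a subring containing $k^2$ (by \cite[10.2]{TW}), hence a subfield since every element is algebraic over $k^2$.

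Topologically, your closedness argument is circular and misses the key construction. You invoke Proposition~\ref{abc3} and Theorem~\ref{abc1}(i), but those presuppose that the defining field is local, i.e.\ exactly what Proposition~\ref{com7} (which relies on the present statement) is in the process of proving; similarly Proposition~\ref{abc101} applies only once $k$ is already known to be a nondiscrete locally compact field. The actual content of the paper's proof is geometric: using the commutator relation $[x_1(1),x_4(a)]=x_2(a)x_3(a)$ of Case~${\mathcal Q}_{\mathcal D}$ together with conjugation by the $\mu$-elements $m=\mu(x_1(1))$ and $r=\mu(x_4(1))$, one writes $x_1(a)=\bigl(x_4(a)^{-m}x_4(a)^{x_1(1)}x_4(a)\bigr)^r$, so the map $x_4(a)\mapsto x_1(a)$ from $U_4$ to $U_1$ is continuous; extending it over the point at infinity gives a continuous map from the \emph{compact} panel $\Gamma_4$ to $\Gamma_1$, whose image is therefore compact, proving that $L_0$ is closed in $k_0$. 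Closedness of $M$ then follows because multiplication by each fixed $u\in L_0^*$ is induced on the panel by the continuous automorphism $r^{-1}\mu(x_4(u^{-1}))$, so $M$ is cut out of the closed set $L_0$ by closed conditions. Without some argument of this kind producing a continuous map between the two panels, there is no way to exhibit any nontrivial closed subfield, and ``finiteness of the lattice of intermediate fields'' would not guarantee closedness even if it were available at this stage.
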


\begin{proof}
Let $P$ and $\infty$ be as in \ref{com1},
let $R=\Gamma_4$, let $[\infty]=5$, let $[a]=3^{x_1(a)}$ for all $a\in L_0$ and let 
$\pi$ be the map from $R$ to $P$ that
sends $[\infty]$ to $\infty$ and $[a]$ to $\langle a\rangle$ for each $a\in L_0$.
Next let $m=\mu(x_1(1))$ and $r=\mu(x_4(1))$, where $\mu$ is as defined in \cite[6.1]{TW}.
By \cite[6.4(i) and 16.4]{TW}, we have
$$[x_1(1),x_4(a)]=x_2(a)x_3(a),$$
$x_2(a)=x_4(a)^m$ and $x_1(b)=x_3(b)^r$ for all $a\in L_0$ and all $b\in k_0$.
Hence
$$x_1(a)=\big(x_4(a)^{-m}x_4(a)^{x_1(1)}x_4(a)\big)^r$$
for all $a\in L_0$. It follows that the map
$x_4(a)\mapsto x_1(a)$ from $U_4$ to $U_1$ is continuous. By Proposition~\ref{com1x}, it
follows that the restriction $[a]\mapsto\langle a\rangle$ of 
the map $\pi$ to $R\backslash\{[\infty]\}$ is continuous. Exactly as in Proposition~\ref{com4}
(alternatively: by Proposition~\ref{com4} and \cite[35.18]{TW}), 
the map from $R$ to itself interchanging $[\infty]$ and $[0]$ and mapping
$[a]$ to $[a^{-1}]$ for all $a\in L_0^*$ is continuous. Thus a sequence
$([a_i])_{i\in{\mathbb N}}$ in $R\backslash\{[0],[\infty]\}$ converges to $[\infty]$ if and only
if the sequence $([a_i^{-1}])_{i\in{\mathbb N}}$ converges to $[0]$. Since the analogous
assertion holds in $P$, it follows that the map $\pi$ from $R$ to $P$ is continuous.
Since $R$ is compact (by Corollary~\ref{ResiduesAreCompact}), 
we conclude that its image is a compact subset of $P$. Thus $\pi(U_4)$
is a closed subset of $U_1$.
By Proposition~\ref{com1x} again, we conclude that $L_0$ is a closed subset of $k_0$.

Now let $M=\{t\in L_0;\ tL_0\subset L_0\}$. Then $M$ is a subring of $k_0$.
Since $k^2\subset M$ (by \cite[10.2]{TW}), it follows that $M$ is, in fact, a subfield.
Let $u\in L_0^*$.
The product $r^{-1}\mu(x_4(u^{-1}))$ fixes the vertex $2$ and (by \cite[32.8]{TW})
$$x_1(t)^{r^{-1}\mu(x_4(u^{-1}))}=x_3(t)^{\mu(x_4(u^{-1}))}=x_1(ut)$$
for all $t\in k$. 
Thus the map from $P$ to itself that fixes $\infty$ and maps
$\langle t\rangle$ to $\langle ut\rangle$ is continuous.
Now suppose that $(t_i)_{i\in{\mathbb N}}$ is a sequence of elements in $M$
that converges to an element $t\in L_0$. Then the sequence
$(ut_i)_{i\in{\mathbb N}}$ converges to $ut$. Since $ut_i\in L_0$ for all $i$,
it follows that $ut\in L_0$. Therefore $t\in M$. Hence $M$ is closed. 
\end{proof}

\begin{proposition}\label{com2}
In Cases~${\mathcal Q}_{\mathcal E}$ and~${\mathcal Q}_{\mathcal F}$,
$x_1(0,k)$ is a closed subset of $U_1$, where $x_1$ is as in
Notation~{\rm\ref{com2a}}.
\end{proposition}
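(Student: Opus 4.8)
The plan is to identify $x_1(0,k)$ with the center of $U_1$ and then to invoke the elementary fact that the center of a Hausdorff topological group is closed. By Proposition~\ref{abc6a} and Theorem~\ref{AutIsMetrizable}, $U_1$ is a closed subgroup of the Hausdorff topological group $\mathrm{Auttop}(\Gamma)$ and is therefore itself a Hausdorff topological group; in particular, for each fixed $v\in U_1$ the map $u\mapsto uvu^{-1}v^{-1}$ is continuous.

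The heart of the matter is the purely algebraic claim that $x_1(0,k)=Z(U_1)$. I would read off the group law on $U_1$ from the descriptions of the groups $S$ and $X_0\oplus k$ in \cite[16.6]{TW} and \cite[16.7]{TW}. Writing a general element of $U_1$ as $x_1(a,t)$ with vector part $a$ and $t\in k$, one checks from these laws that $U_1$ is two-step nilpotent, that $x_1(0,k)$ is central, and that the commutator $[x_1(a,s),x_1(b,t)]$ lies in $x_1(0,k)$ and is governed by a biadditive pairing of the vector parts $a,b$ alone. The inclusion $x_1(0,k)\subseteq Z(U_1)$ is then immediate, and the reverse inclusion amounts to the non-degeneracy of this pairing: if $x_1(a,s)$ is central, the pairing of $a$ against every element vanishes, forcing $a=0$. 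This non-degeneracy is part of the quadrangular-algebra data (see \cite{weiss2} and \cite[16.6 and 16.7]{TW}), and I would invoke it directly, taking extra care in Case~${\mathcal Q}_{\mathcal F}$, where $\mathrm{char}(k)=2$ and the pairing is alternating.

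Granting $x_1(0,k)=Z(U_1)$, the proof concludes at once: since $Z(U_1)=\bigcap_{v\in U_1}\{u\in U_1;\ uvu^{-1}v^{-1}=1\}$ is an intersection of preimages of the closed singleton $\{1\}$ under continuous maps, it is closed in $U_1$, so $x_1(0,k)$ is closed. I expect the only real obstacle to be this algebraic step—showing that the center is no larger than $x_1(0,k)$—while the topological conclusion is formal. An alternative that avoids computing inside $U_1$ is to identify $x_1(0,k)$ with the centralizer $C_{U_1}(U_3)$ of the partner root group $U_3$, using the explicit $U_1$--$U_3$ commutator relations in \cite[16.6]{TW} and \cite[16.7]{TW} (provided those commutators are independent of the central parameters); this centralizer is again manifestly closed and would finish the proof in the same way. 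Should the center or centralizer turn out to be strictly larger than $x_1(0,k)$ in some subcase, one could instead realize $x_1(0,k)$ through the commutator relations $[U_2,U_4]\subseteq U_3$ composed with a $\mu$-conjugation, and then extract closedness from the compactness of the residues exactly as in the proof of Proposition~\ref{com6}.
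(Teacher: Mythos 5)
Your proposal is correct and matches the paper's argument: the paper's entire proof is the one-line observation that $x_1(0,k)=C_{U_1}(U_3)$ by \cite[16.10 and 16.11]{TW}, which is closed because $U_1$ and $U_3$ are closed by Proposition~\ref{abc6a} --- exactly the ``alternative'' you describe at the end. Your primary route via $Z(U_1)$ is the same idea (a centralizer in a Hausdorff topological group is closed) and would also work, modulo the nondegeneracy of the commutator pairing that you correctly flag as the only substantive algebraic input.
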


\begin{proof}
By 16.10 and 16.11 of \cite{TW}, $x_1(0,k)=C_{U_1}(U_3)$. Since $U_1$ and $U_3$
are closed (by \ref{abc6a}), the claim follows.
\end{proof}

\begin{notation}\label{abc76}
Let $D$ be the group 
\begin{thmenum}
\item $U_n$ in Cases~${\mathcal Q}_{\mathcal I}$,
${\mathcal Q}_{\mathcal P}$ and ${\mathcal H}$;
\item $U_1$ in Cases~${\mathcal Q}_{\mathcal T}$, ${\mathcal Q}_{\mathcal Q}$~and 
${\mathcal O}$;
\item $x_1(M)$ in Case~${\mathcal Q}_{\mathcal D}$; and
\item $x_1(0,k)$ in Cases~${\mathcal Q}_{\mathcal E}$ and~${\mathcal Q}_{\mathcal F}$.
\end{thmenum}
\end{notation}

\begin{proposition}\label{abc77}
In Cases ${\mathcal Q}_{\mathcal E}$, ${\mathcal Q}_{\mathcal F}$
and~${\mathcal Q}_{\mathcal D}$, let $Q=\{\infty\}\cup0^D$, where
$D$ is as in Notation~{\rm\ref{abc76}}. In all other
cases, let $Q=P$, where $P$ is the panel of $\Gamma$ defined
in Notation~{\rm\ref{com1}}. Then $Q$ is a compact subset of $P$. 
\end{proposition}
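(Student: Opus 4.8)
The plan is to dispose of the ``other'' cases at once and then reduce the three exceptional cases to a one-point-compactification argument. In every case $Q$ lies in the panel $P$, which is compact by Lemma~\ref{BasicLemmaOnCompactBuildings}(iii) (equivalently, by Corollary~\ref{ResiduesAreCompact}). In Cases~${\mathcal T}$, ${\mathcal Q}_{\mathcal I}$, ${\mathcal Q}_{\mathcal Q}$, ${\mathcal Q}_{\mathcal P}$, ${\mathcal H}$ and~${\mathcal O}$ we have $Q=P$ by definition, so there is nothing to prove.

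It remains to treat Cases~${\mathcal Q}_{\mathcal D}$, ${\mathcal Q}_{\mathcal E}$ and~${\mathcal Q}_{\mathcal F}$, in which $Q=\{\infty\}\cup0^D$ with $\infty=2$ and $P=\Gamma_1$ by Notation~\ref{com1}. First I would invoke the preceding propositions to see that $D$ is a closed subset of $U_1$: this is Proposition~\ref{com6} (with $D=x_1(M)$) in Case~${\mathcal Q}_{\mathcal D}$, and Proposition~\ref{com2} (with $D=x_1(0,k)$) in Cases~${\mathcal Q}_{\mathcal E}$ and~${\mathcal Q}_{\mathcal F}$. Next, Proposition~\ref{com1x} provides a homeomorphism $u\mapsto0^u$ from $U_1$ onto $\Gamma_1\setminus\{2\}=P\setminus\{\infty\}$; applying it to the closed set $D$ shows that $0^D$ is closed in $P\setminus\{\infty\}$.

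The last step adjoins the single point $\infty$ and passes from ``closed in $P\setminus\{\infty\}$'' to ``compact in $P$''. Since $P$ is Hausdorff, $\{\infty\}$ is closed and hence $P\setminus\{\infty\}$ is open in $P$. As $\infty\notin0^D$, the complement of $Q$ in $P$ is
\[
P\setminus Q=(P\setminus\{\infty\})\setminus0^D,
\]
which is open in $P\setminus\{\infty\}$ because $0^D$ is closed there, and therefore open in $P$ because $P\setminus\{\infty\}$ is open in $P$. Thus $Q$ is closed in the compact space $P$, and so $Q$ is compact.

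I do not expect a genuine obstacle: the substantive work, the closedness of $D$, has already been carried out in Propositions~\ref{com6} and~\ref{com2}. The only point needing care is the last paragraph, where one uses that $P$ is topologically the one-point compactification of $U_1\cong P\setminus\{\infty\}$, so that adjoining the point $\infty$ to a set merely closed in $U_1$ is exactly what produces a compact subset of $P$.
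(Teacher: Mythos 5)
Your proof is correct and follows essentially the same route as the paper: compactness of the panel $P$ via Corollary~\ref{ResiduesAreCompact}, closedness of $D$ via Propositions~\ref{com6} and~\ref{com2}, and the identification of $Q$ as the one-point compactification of $0^D\cong D$ inside $P$ (the paper states this last step in one line; you have merely spelled out the point-set details).
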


\begin{proof}
By Corollary~\ref{ResiduesAreCompact}, the panel $P$ is compact. 
By Propositions~\ref{com6} and~\ref{com2}, therefore,
$Q$ is the one-point compactification of $Q\backslash\{\infty\}$
in Cases~${\mathcal Q}_{\mathcal E}$, ${\mathcal Q}_{\mathcal F}$
and ${\mathcal Q}_{\mathcal D}$.
\end{proof}

\begin{proposition}\label{com3}
The map $t\mapsto\langle t\rangle$ is a homeomorphism from $k$ 
$($respectively, $M$ in Case~${\mathcal Q}_{\mathcal D}$$\,)$ to $Q\backslash\{\infty\}$.
\end{proposition}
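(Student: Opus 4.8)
The plan is to realize the map $t\mapsto\langle t\rangle$ in each of the nine cases as a composite of two maps that have already been shown to be homeomorphisms: first a coordinate map from $k$ (or $k_0$, or $k$ via $t\mapsto x_1(0,t)$) into the relevant root group, and then the evaluation map carrying that root group onto (a subset of) a panel. Since a composition of homeomorphisms, and the restriction of a homeomorphism to a subspace, are again homeomorphisms, the conclusion will be essentially a matter of matching each case to the correct pair of ingredients.

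The two ingredients are the following. By Definition~\ref{com2g}, the coordinate map is a homeomorphism: $x_1\colon k\to U_1$ in Cases~${\mathcal T}$, ${\mathcal Q}_{\mathcal Q}$ and~${\mathcal O}$; $x_n\colon k\to U_n$ in Cases~${\mathcal Q}_{\mathcal I}$, ${\mathcal Q}_{\mathcal P}$ and~${\mathcal H}$; $x_1\colon k_0\to U_1$ in Case~${\mathcal Q}_{\mathcal D}$ (so its restriction to $M$ is a homeomorphism onto $x_1(M)$); and $t\mapsto x_1(0,t)$ from $k$ onto $x_1(0,k)$ in Cases~${\mathcal Q}_{\mathcal E}$ and~${\mathcal Q}_{\mathcal F}$. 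By Proposition~\ref{com1x}, the evaluation maps $u\mapsto 0^u\colon U_1\to\Gamma_1\setminus\{2\}$ and $u\mapsto (n-1)^u\colon U_n\to\Gamma_n\setminus\{n+1\}$ are homeomorphisms. In Cases~${\mathcal T}$, ${\mathcal Q}_{\mathcal Q}$ and~${\mathcal O}$ we have $Q=P=\Gamma_1$ and $\infty=2$, and $\langle t\rangle=0^{x_1(t)}$, so $t\mapsto\langle t\rangle$ is the composite $k\xrightarrow{x_1}U_1\xrightarrow{0^{(-)}}\Gamma_1\setminus\{2\}=Q\setminus\{\infty\}$, hence a homeomorphism. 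Cases~${\mathcal Q}_{\mathcal I}$, ${\mathcal Q}_{\mathcal P}$ and~${\mathcal H}$ are identical after replacing $x_1$, $0^{(-)}$ by $x_n$, $(n-1)^{(-)}$ and using $Q=P=\Gamma_n$, $\infty=n+1$.

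For the exceptional Cases~${\mathcal Q}_{\mathcal D}$, ${\mathcal Q}_{\mathcal E}$ and~${\mathcal Q}_{\mathcal F}$, recall that $Q=\{\infty\}\cup 0^D$ with $\infty=2$ and $D=x_1(M)$ (respectively $D=x_1(0,k)$) a subset of $U_1$; since $0^D\subseteq 0^{U_1}=\Gamma_1\setminus\{2\}$ we have $\infty\notin 0^D$, so $Q\setminus\{\infty\}=0^D$. Here $t\mapsto\langle t\rangle$ is the composite of the coordinate map onto $D$ with the restriction of $u\mapsto 0^u$ to $D$, and this restriction is a homeomorphism onto its image $0^D=Q\setminus\{\infty\}$. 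I expect no real obstacle: all the analytic content (local compactness of the root groups, continuity of the $\mu$-maps, and the closedness of $D$ which via Proposition~\ref{abc77} makes $Q$ a genuine one-point compactification in these cases) has already been supplied by Propositions~\ref{abc6a}, \ref{com1x}, \ref{com6}, \ref{com2} and~\ref{abc77}. The only point requiring care is the bookkeeping of the case division—assigning the correct coordinate map and panel to each group of cases and verifying in the exceptional cases that $\infty\notin 0^D$, so that the image of the composite is exactly $Q\setminus\{\infty\}$.
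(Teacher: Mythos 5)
Your proposal is correct and follows exactly the paper's own (very terse) argument: the paper proves Proposition~\ref{com3} by simply citing Proposition~\ref{com1x} and Definition~\ref{com2g}, and your write-up is just the careful case-by-case unpacking of that composition of homeomorphisms, including the correct handling of the restricted image $0^D=Q\setminus\{\infty\}$ in Cases~${\mathcal Q}_{\mathcal D}$, ${\mathcal Q}_{\mathcal E}$ and~${\mathcal Q}_{\mathcal F}$.
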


\begin{proof}
This holds by Proposition~\ref{com1x} and Definition~\ref{com2g}.
\end{proof}

\begin{proposition}\label{com3a}
$k$ $($respectively, $M$ in Case~${\mathcal Q}_{\mathcal D}$$\,)$ is totally disconnected,
in fact, homeomorphic to the Cantor set minus a point.
\end{proposition}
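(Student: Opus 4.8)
The plan is to reduce everything, via Proposition~\ref{com3}, to showing that the compact set $Q$ is homeomorphic to the Cantor set; once that is known, $Q\setminus\{\infty\}\cong k$ (respectively $M$) is the Cantor set with one point deleted, which is exactly the assertion (and is well defined up to homeomorphism since the Cantor set is homogeneous). By Proposition~\ref{abc77}, $Q$ is a compact subset of the panel $P$. Since $\Gamma$ is totally disconnected, its chamber set is homeomorphic to the Cantor set by Proposition~\ref{pop2}; in particular $P$, which is compact by Corollary~\ref{ResiduesAreCompact} and metrizable by Proposition~\ref{CompactBuildingsMetrizable}, is itself a Cantor set (the proof of Proposition~\ref{pop2} shows that every panel is totally disconnected and perfect). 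Consequently $Q$, being a compact subspace of $P$, is compact, metrizable and totally disconnected, so by \cite[2-98]{hocking} it is a Cantor set as soon as it has no isolated points. First I would dispose of the six cases ${\mathcal T},{\mathcal Q}_{\mathcal I},{\mathcal Q}_{\mathcal Q},{\mathcal Q}_{\mathcal P},{\mathcal H},{\mathcal O}$: there, by Notation~\ref{abc76} and Proposition~\ref{abc77}, we have $Q=P$, which is already perfect, so there is nothing more to check.

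It remains to treat Cases~${\mathcal Q}_{\mathcal E}$, ${\mathcal Q}_{\mathcal F}$ and~${\mathcal Q}_{\mathcal D}$, in which $Q=\{\infty\}\cup 0^D$ can be a proper subset of $P$ and so need not inherit perfectness. The strategy is to exhibit enough self-homeomorphisms of $Q$ to make it \emph{homogeneous}, after which the conclusion is automatic: a compact, infinite, homogeneous space can have no isolated point, since otherwise every point would be isolated and the space would be finite. Now $D\subseteq U_1$, and $U_1$ fixes the vertex $\infty$ (it fixes the root $\alpha_1$ pointwise, and $\infty$ is a vertex of $\alpha_1$), so $D$ fixes $\infty$ and acts on $0^D=Q\setminus\{\infty\}$ by homeomorphisms (Corollary~\ref{ElationsAreContinuous}); this action is simply transitive because the stabilizer of $0$ in $U_1$ is trivial by \cite[3.7]{TW}. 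Thus $D$ already acts transitively on $Q\setminus\{\infty\}$, and it suffices to produce one further homeomorphism of $Q$ that carries $\infty$ into this orbit.

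To obtain such a map I would use an inversion interchanging $\infty$ and $\langle0\rangle$. In Case~${\mathcal Q}_{\mathcal D}$ the homeomorphism $\tau$ of $P$ constructed in Proposition~\ref{com4} does precisely this, and it preserves $Q$: since $M$ is a field, $\tau$ sends $\langle t\rangle$ to $\langle-t^{-1}\rangle$ with $-t^{-1}\in M^{*}$ for every $t\in M^{*}$, while swapping $\infty$ with $\langle0\rangle\in 0^D$. Hence $\langle D,\tau|_{Q}\rangle$ acts transitively on $Q$, so $Q$ is homogeneous, infinite and compact, whence perfect and therefore a Cantor set. The hard part will be Cases~${\mathcal Q}_{\mathcal E}$ and~${\mathcal Q}_{\mathcal F}$, where Proposition~\ref{com4} is explicitly unavailable: one must build the analogous inversion directly from a $\mu$-element and the structure-constant identities of \cite[Chapter~32]{TW}, and then verify that it preserves the distinguished subgroup $x_1(0,k)=C_{U_1}(U_3)$ and hence maps $Q$ onto $Q$ while interchanging $\infty$ and $\langle0\rangle$. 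I expect this verification—ensuring the inversion does not mix the $k$-part $x_1(0,k)$ with the rest of $U_1$—to be the main obstacle. Once it is in place, $Q$ is homogeneous in these cases as well, hence a Cantor set, and Proposition~\ref{com3} completes the proof.
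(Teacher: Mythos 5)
Your proposal is correct in substance, but it is worth comparing it with the paper's own proof, which consists of the single line ``This holds by Propositions~\ref{pop2} and~\ref{com3}.'' That citation is fully adequate for the six cases in which $Q=P$ is an entire panel (the proof of Proposition~\ref{pop2} shows every panel is compact, totally disconnected and perfect, hence a Cantor set, and Proposition~\ref{com3} transports this to $k$), and it also immediately gives the \emph{totally disconnected} part of the statement in all cases --- which is the only part actually used later, in the proof of Proposition~\ref{com7}. You are right, however, that for the refinement ``homeomorphic to the Cantor set minus a point'' in Cases~${\mathcal Q}_{\mathcal E}$, ${\mathcal Q}_{\mathcal F}$ and~${\mathcal Q}_{\mathcal D}$ something more is needed: $Q=\{\infty\}\cup 0^D$ is then only a compact subset of the Cantor set $P$, and a priori could have isolated points (e.g.\ be a convergent sequence), in which case $k\cong Q\setminus\{\infty\}$ would be discrete. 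Your homogeneity argument --- $D$ acts simply transitively on $Q\setminus\{\infty\}$ by Proposition~\ref{com1x}, an inversion swaps $\infty$ with $\langle 0\rangle$, and an infinite compact homogeneous space is perfect --- closes this correctly, and your treatment of Case~${\mathcal Q}_{\mathcal D}$ via the restriction of $\tau$ from Proposition~\ref{com4} (which preserves $Q$ because $M$ is a field) is complete. The one step you defer, the inversion of $Q$ in Cases~${\mathcal Q}_{\mathcal E}$ and ${\mathcal Q}_{\mathcal F}$, is not the obstacle you fear: it is exactly Proposition~\ref{com4x}, which the paper proves immediately after this statement by the mechanical substitution of $x_1(0,t)$ for $x_1(t)$ (and likewise for $x_{n+1}$) throughout the proof of Proposition~\ref{com4}; the identities of \cite[Chapter~32]{TW} go through verbatim and automatically keep $x_1(0,k)=C_{U_1}(U_3)$ inside itself. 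So your route costs an extra lemma (the inversion on $Q$ rather than on $P$) but buys an explicit justification of perfectness that the paper's two-citation proof leaves implicit.
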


\begin{proof}
This holds by Propositions~\ref{pop2} and~\ref{com3}.
\end{proof}

\begin{proposition}\label{com4x}
Let $\omega$ be the map from $Q$ to itself that interchanges $\infty$ and $\langle 0\rangle$
and maps $\langle t\rangle$ to $\langle -t^{-1}\rangle$ for all $t\in k^*$
$($respectively, $t\in M^*$ in Case~${\mathcal Q}_{\mathcal D}$$\,)$. Then $\omega$
is a homeomorphism.
\end{proposition}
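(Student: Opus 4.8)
The plan is to treat separately the three groups of cases occurring in the definition of $Q$ in Proposition~\ref{abc77}, exhibiting $\omega$ in each as a restriction of a continuous involution; since an involution is its own inverse, continuity will then force $\omega$ to be a homeomorphism. That $\omega$ is an involution is immediate, as it interchanges $\infty$ and $\langle0\rangle$ and satisfies $-(-t^{-1})^{-1}=t$ for every $t\in k^*$ (respectively $t\in M^*$). In the six cases ${\mathcal T}$, ${\mathcal Q}_{\mathcal I}$, ${\mathcal Q}_{\mathcal Q}$, ${\mathcal Q}_{\mathcal P}$, ${\mathcal H}$ and~${\mathcal O}$ we have $Q=P$ by Proposition~\ref{abc77}, and Notation~\ref{com1} together with Proposition~\ref{com1x} and Definition~\ref{com2g} show that $\{\langle t\rangle;\ t\in k\}=P\setminus\{\infty\}$. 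Hence $\omega$ agrees on all of $Q=P$ with the map $\tau$ of Proposition~\ref{com4}, so $\omega=\tau$ is a homeomorphism by that proposition.

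In Case~${\mathcal Q}_{\mathcal D}$ we have $Q=\{\infty\}\cup\{\langle t\rangle;\ t\in M\}\subseteq P$, where $M\subseteq k_0$ is the subfield of Proposition~\ref{com6}. Here I would again invoke the homeomorphism $\tau$ of Proposition~\ref{com4}, which is defined on all of $P$, interchanges $\infty$ and $\langle0\rangle$, and sends $\langle t\rangle$ to $\langle-t^{-1}\rangle$ for every $t\in k_0^*$. Since $M$ is a field, $-t^{-1}\in M^*$ whenever $t\in M^*$; hence $\tau(Q)\subseteq Q$, and as $\tau$ is an involution, $\tau(Q)=Q$. Therefore $\omega=\tau|_Q$ is the restriction of a homeomorphism to a $\tau$-invariant subspace, and so is itself a homeomorphism.

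The substantive case is that of ${\mathcal Q}_{\mathcal E}$ and~${\mathcal Q}_{\mathcal F}$, where Proposition~\ref{com4} is unavailable. Here $Q=\{\infty\}\cup\{\langle t\rangle;\ t\in k\}$ with $\langle t\rangle=0^{x_1(0,t)}$, $\infty=2$ and $\langle0\rangle=0$. The plan is to repeat the computation of Proposition~\ref{com4} using the element $m=\mu(x_1(0,1))$ of \cite[6.1]{TW} in place of $\mu(x_1(1))$. Lying in $U_{n+1}^*\,x_1(0,1)\,U_{n+1}^*$, the element $m$ is a product of elements of the root groups $U_1$ and $U_{n+1}$, hence continuous by Corollary~\ref{ElationsAreContinuous}; since $U_1$ and $U_{n+1}$ both fix the vertex~$1$, so does $m$, whence $m$ maps $P=\Gamma_1$ onto itself and induces on $\Sigma$ the reflection fixing $1$ and $n+1$, interchanging $\infty=2$ and $\langle0\rangle=0$. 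Restricting the commutator and $\mu$-action identities of \cite[Chapter~32]{TW} to the one-parameter subgroup $x_1(0,k)=C_{U_1}(U_3)$ identified in Proposition~\ref{com2}, one then computes, exactly as in Proposition~\ref{com4}, that $\langle t\rangle^m=\langle-t^{-1}\rangle$ for all $t\in k^*$. It follows that $m(Q)=Q$ and $m|_Q=\omega$, and since $m$ is continuous and $\omega$ is an involution, $\omega$ is a homeomorphism.

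The step I expect to demand the most care is this last computation: one must verify from the quadrangular-algebra identities in \cite[Chapter~32]{TW} that $m=\mu(x_1(0,1))$ really induces $t\mapsto-t^{-1}$ on the central subgroup $x_1(0,k)$, with no extra twist, and that it fixes the vertex~$1$ and swaps $\langle0\rangle$ with $\infty$ as claimed. The remaining cases, by contrast, reduce cleanly to Proposition~\ref{com4}.
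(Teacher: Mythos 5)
Your proposal is correct and follows essentially the same route as the paper: the paper likewise disposes of all cases other than ${\mathcal Q}_{\mathcal E}$ and ${\mathcal Q}_{\mathcal F}$ by appealing to Proposition~\ref{com4} (with the restriction to the $\tau$-invariant subset $Q$ implicit in Case~${\mathcal Q}_{\mathcal D}$), and then handles ${\mathcal Q}_{\mathcal E}$ and ${\mathcal Q}_{\mathcal F}$ by rerunning the computation of Proposition~\ref{com4} with $x_1(t)$, $x_1(t^{-1})$ and $x_{n+1}(t^{-1})$ replaced by $x_1(0,t)$, $x_1(0,t^{-1})$ and $x_{n+1}(0,t^{-1})$, exactly as you describe.
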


\begin{proof}
By Proposition~\ref{com4}, it suffices to consider Cases~${\mathcal Q}_{\mathcal E}$ and
${\mathcal Q}_{\mathcal F}$. In these two cases we can simply replace
$x_1(t)$ by $x_1(0,t)$, $x_1(t^{-1})$ by $x_1(0,t^{-1})$ 
and $x_{n+1}(t^{-1})$ by $x_{n+1}(0,t^{-1})$ everywhere in the proof of Proposition~\ref{com4}.
\end{proof}

\begin{proposition}\label{com30}
Let $E$ denote the permutation group on $Q$ generated by 
$\tau$ and the group induced by $D$ on $Q$ and let $k$ $($respectively, $M$ in 
Case~${\mathcal Q}_{\mathcal D}$$\,)$ be identified with $Q\backslash\{\infty\}$
via the homeomorphism $t\mapsto\langle t\rangle$, so $Q=k\cup\{\infty\}$ $($respectively,
$Q=M\cup\{\infty\}$$\,)$. 
Then $(E,Q)$ is the projective line $\hat k$ $($respectively, $\hat M$ in 
Case~${\mathcal Q}_{\mathcal D}$$\,)$.
\end{proposition}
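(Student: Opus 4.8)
The plan is to check, under the identification $t\leftrightarrow\langle t\rangle$ together with $\infty\leftrightarrow\infty$, that the two generators of $E$ correspond exactly to the two generators of the projective line group. Recall that $\hat k=(G,\,k\cup\{\infty\})$, where $G$ is generated by the translations $\tau_a\colon x\mapsto a+x$ for all $a\in k$ and by the single involution $i\colon x\mapsto-x^{-1}$ (with the usual conventions), and analogously for $\hat M$.

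First I would identify $\tau$ with $i$. By Proposition~\ref{com4} (respectively Proposition~\ref{com4x}, which in Cases~${\mathcal Q}_{\mathcal E}$ and~${\mathcal Q}_{\mathcal F}$ supplies the map $\omega$ playing the role of $\tau$), this map interchanges $\infty$ and $\langle 0\rangle$ and sends $\langle t\rangle$ to $\langle -t^{-1}\rangle$ for all $t\in k^*$ (respectively $t\in M^*$). Under the identification this reads $\infty\leftrightarrow 0$ together with $t\mapsto -t^{-1}$, which is precisely the involution $i$ of $\hat k$ (respectively $\hat M$); in Case~${\mathcal Q}_{\mathcal D}$ I use that $M$ is a subfield (Proposition~\ref{com6}), so that $-t^{-1}\in M$ for each $t\in M^*$ and $Q=M\cup\{\infty\}$ is preserved.

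Next I would identify the group induced by $D$ with the translations. Write $v$ for the base vertex ($0$, or $n-1$ in Cases~${\mathcal Q}_{\mathcal I}$, ${\mathcal Q}_{\mathcal P}$ and~${\mathcal H}$) and $x$ for the relevant coordinatizing map ($x_1$, or $x_n$, or $t\mapsto x_1(0,t)$ in Cases~${\mathcal Q}_{\mathcal E}$ and~${\mathcal Q}_{\mathcal F}$), so that in every case $\langle t\rangle=v^{\,x(t)}$ and $D=\{x(a)\}$. For $a$ in $k$ (respectively $M$) and arbitrary $s$ I then compute
$$\langle s\rangle^{x(a)}=\bigl(v^{\,x(s)}\bigr)^{x(a)}=v^{\,x(s)x(a)}=v^{\,x(s+a)}=\langle s+a\rangle,$$
so that $x(a)$ induces on $Q$ the translation $\tau_a\colon s\mapsto s+a$, fixing $\infty$. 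As $a$ ranges over $k$ (respectively $M$) the elements $x(a)$ exhaust $D$, so the group induced by $D$ on $Q$ is exactly $\{\tau_a\}$; in Case~${\mathcal Q}_{\mathcal D}$ closure of $M$ under addition guarantees $s+a\in M$, so that $Q=M\cup\{\infty\}$ is indeed preserved. Combining the two identifications, $E$ is generated by $i$ together with the full translation group, whence $E=G$ and $(E,Q)=\hat k$ (respectively $\hat M$).

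The only substantive ingredient — and what I expect to be the main obstacle — is the additive homomorphism identity $x(s)x(a)=x(s+a)$ used in the central display. For $x_1$ in Cases~${\mathcal T}$, ${\mathcal Q}_{\mathcal Q}$, ${\mathcal O}$ and~${\mathcal Q}_{\mathcal D}$ and for $x_n$ in Cases~${\mathcal Q}_{\mathcal I}$, ${\mathcal Q}_{\mathcal P}$ and~${\mathcal H}$ this is part of the assertion in Notation~\ref{com2a} that these maps are isomorphisms from the relevant additive groups onto the (abelian) root groups, as recorded in the coordinatizations of \cite[16.1--16.9]{TW}; in Cases~${\mathcal Q}_{\mathcal E}$ and~${\mathcal Q}_{\mathcal F}$ it is exactly the statement at the end of Notation~\ref{com2a} that $t\mapsto x_1(0,t)$ is an additive homomorphism into $U_1$. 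With this identity in hand, the remainder is the bookkeeping indicated above.
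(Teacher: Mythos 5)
Your proposal is correct and follows essentially the same route as the paper, whose proof is just a citation of Corollary~\ref{ElationsAreContinuous} and Proposition~\ref{com4x}: under the identification $t\leftrightarrow\langle t\rangle$ the map $\omega$ becomes the involution $x\mapsto -x^{-1}$ and $D$ induces exactly the translation group, the latter because each coordinatizing map $x_i$ is an isomorphism of additive groups. You have simply written out the generator-matching computation that the paper leaves implicit, and your identification of the additive homomorphism identity $x(s)x(a)=x(s+a)$ as the one substantive ingredient is accurate.
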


\begin{proof}
This holds by Corollary~\ref{ElationsAreContinuous} and Proposition~\ref{com4x}.
\end{proof}

\begin{proposition}\label{com7}
$k$ is a local field as defined in Definition~{\rm\ref{abc0}}.
\end{proposition}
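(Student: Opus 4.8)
The plan is to show that $k$ is a local field by combining the topological information we have already extracted about $k$ with the abstract classification of locally compact division rings. The key realization is that Proposition~\ref{com30} tells us that $(E,Q)$ is precisely the projective line $\hat k$ (respectively $\hat M$ in Case~${\mathcal Q}_{\mathcal D}$), and Proposition~\ref{abc77} tells us that $Q$ is compact. Thus $\hat k$ is a \emph{compact projective line} in the sense defined just before Theorem~\ref{TopHua}, provided we check that $E$ acts by homeomorphisms. The generators of $E$ are $\tau$ (a homeomorphism by Proposition~\ref{com4x}) together with the maps induced by $D$; the latter are restrictions of elements of root groups and hence continuous by Corollary~\ref{ElationsAreContinuous}, so every element of $E$ is indeed a homeomorphism of the compact space $Q$.

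With $\hat k = (E,Q)$ established as a compact projective line over the division ring $k$ (or $M$), I would invoke Theorem~\ref{TopHua}: it yields that $k$, with the subspace topology from $Q$, is a locally compact, non-discrete, $\sigma$-compact field, skew field or alternative division ring. Here I would note that the homeomorphism $t\mapsto\langle t\rangle$ of Proposition~\ref{com3} identifies this subspace topology with the topology we placed on $k$ in Definition~\ref{com2g}, so no ambiguity arises about which topology on $k$ is being discussed.

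Next I would upgrade ``locally compact non-discrete'' to ``local field'' in the sense of Definition~\ref{abc0}. The crucial extra input is total disconnectedness: by Proposition~\ref{com3a}, $k$ (respectively $M$) is totally disconnected, homeomorphic to the Cantor set minus a point. By Remark~\ref{abc90}, adding total disconnectedness to the hypotheses of Theorem~\ref{TopHua} forces $k$ to be associative, so $k$ is a (possibly skew) field and not a proper octonion algebra; this is consistent since a totally disconnected space cannot carry the connected topology of $\mathbb{O}$ from Proposition~\ref{localaltalgebras}. Then Proposition~\ref{locfieldclass} (together with Proposition~\ref{localskewfieldclass} in the noncommutative case) classifies the nondiscrete locally compact $k$: it is either archimedean ($\mathbb{R}$, $\mathbb{C}$, or a skew field with center $\mathbb{R}$) or a local field. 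The archimedean possibilities are connected, hence excluded by total disconnectedness, leaving exactly the local fields of Proposition~\ref{abc0a}.

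The main obstacle I anticipate is the bookkeeping in Case~${\mathcal Q}_{\mathcal D}$, where the conclusion we have direct topological control over concerns the auxiliary subfield $M$ rather than $k$ itself, so the argument above shows $M$ is a local field and one must then deduce the statement for $k$ via the relation $k^2 \subseteq M \subseteq k_0 \subseteq k$ together with the degree computations of Proposition~\ref{abc101}. In the smoother cases (${\mathcal T}$, ${\mathcal Q}_{\mathcal Q}$, ${\mathcal O}$, ${\mathcal Q}_{\mathcal I}$, ${\mathcal Q}_{\mathcal P}$, ${\mathcal H}$, and after Propositions~\ref{com2}, \ref{com3}, \ref{com4x} also ${\mathcal Q}_{\mathcal E}$ and ${\mathcal Q}_{\mathcal F}$) the identification $Q \setminus \{\infty\} \cong k$ is direct, and the argument closes immediately once Theorem~\ref{TopHua}, Remark~\ref{abc90}, and the classification propositions are applied.
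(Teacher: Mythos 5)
Your proposal is correct and follows essentially the same route as the paper: $Q$ compact (Proposition~\ref{abc77}) together with Proposition~\ref{com30} feeds into Theorem~\ref{TopHua}, total disconnectedness (Proposition~\ref{com3a}) combined with Propositions~\ref{localaltalgebras} and~\ref{abc0a} rules out the connected (archimedean and octonion) possibilities, and Case~${\mathcal Q}_{\mathcal D}$ is treated separately. The step you flag as the main obstacle is closed in the paper exactly along the lines you indicate: $M$ is a commutative local field of characteristic~$2$, hence a Laurent series field with $[M:M^2]=2$, and since $M^2\subseteq k^2\subseteq M$ the field $k$ is a finite extension of $M$ and therefore itself local.
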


\begin{proof}
Suppose first that we are not in Case~${\mathcal Q}_{\mathcal D}$. 
By Proposition~\ref{abc77}, $Q$ is compact. By 
Theorem~\ref{TopHua} and Proposition~\ref{com30}, therefore, $k$ is a locally compact field. 
Hence by Propositions~\ref{localaltalgebras} and~\ref{com3a}, 
$k$ is a local field. Now suppose that we are 
in Case~${\mathcal Q}_{\mathcal D}$. By virtually the same arguments, we conclude that
$M$ is a local field. Since $M$ is a commutative field of characteristic~2,
it is the field of formal Laurent series over a finite field (by Proposition~\ref{locfieldclass}).
By Proposition~\ref{com6}, $k/M^2$ is a quadratic (and hence finite) extension.
Therefore $k$ is a local field also in this case.
\end{proof}

\begin{proposition}\label{com9}
The topology $T$ on the set of chambers of $\Gamma$ is unique.
\end{proposition}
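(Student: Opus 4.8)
The plan is to reduce uniqueness to the panels and then to the single defining field $k$. By Corollary \ref{pop1} the topology on the chamber set is determined by its restrictions to the panels of $\Gamma$, and by Corollary \ref{abc46} the group $G^\dagger$ acts strongly transitively; since every element of $G^\dagger$ is a homeomorphism for \emph{any} compact building topology on $\Gamma$, it suffices to show that an arbitrary such topology $T$ induces a uniquely determined topology on one panel of each of the two types.

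The crucial point is that the topology which Definition \ref{com2g} places on $k$ (respectively on $M$ in Case $\mathcal{Q}_\mathcal{D}$) does not actually depend on the choice of $T$. Indeed, by Proposition \ref{com7} this topology makes $k$ a local field, hence a non-discrete locally compact (skew) field; by Proposition \ref{com3a} it is totally disconnected, so it is neither $\mathbb{R}$ nor $\mathbb{C}$, and therefore by Proposition \ref{abc65} its locally compact topology is unique. Consequently any two compact building topologies on $\Gamma$ transport, through the coordinatizing maps $x_i$, to one and the same topology on $k$; the same applies to the auxiliary parameter systems $k_0$, $L_0$ and $M$, which are closed inside the relevant local field and therefore inherit a determined topology.

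I would then push this back out to the panels. By Proposition \ref{com1x} the maps $u \mapsto 0^u$ and $u \mapsto (n-1)^u$ identify $U_1$ and $U_n$ homeomorphically with the punctured panels $\Gamma_1 \setminus \{2\}$ and $\Gamma_n \setminus \{n+1\}$, and by Definition \ref{com2g} these root groups are homeomorphic to $k$; thus each punctured panel carries the intrinsic topology just described. Each full panel is compact (Corollary \ref{ResiduesAreCompact}) and has no isolated points, so it is the one-point compactification of the corresponding punctured panel (Proposition \ref{abc77}), and such a compactification is unique. Whenever $n$ is even, the vertices $1$ and $n$ have opposite types and so $\Gamma_1$ and $\Gamma_n$ already represent both panel types; in Case $\mathcal{T}$ (where $n=3$) I would run the identical argument for $U_2$ to obtain a panel of the remaining type.

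The hard part will be to guarantee that fixing these two panels really fixes the full chamber topology, i.e.\ to control the coordinatization of Proposition \ref{Coordinates}. For this I would propagate the topology from $U_1$ and $U_n$ to every root group $U_1, \ldots, U_n$ using the relations assembled in the proofs of Propositions \ref{com4} and \ref{com6}: the reflection elements $m = \mu(x_1(1))$ and $r = \mu(x_4(1))$ and their analogues all lie in $G^\dagger$ and are therefore homeomorphisms, and the identities of the form $x_{i+1} = x_i^{\,\mu}$ then show that each $x_i$ is a homeomorphism for the intrinsic topology on $k$ and its companions. Hence $U_+ = U_1 \cdots U_n$, and with it every big cell $C_{w_0}(c_0) \cong U_1 \times \cdots \times U_n$, acquires a topology independent of $T$. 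Since the big cells are open and cover the chamber set, Corollary \ref{pop1} (through Proposition \ref{Coordinates}) finally gives that $T$ is unique.
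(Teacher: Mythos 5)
Your overall strategy — reduce to panels via Corollary \ref{pop1}, pin down the topology on $k$ as the unique locally compact field topology, and transfer back to the panels — matches the paper's, and your treatment of the panel that is directly coordinatized by $k$ (via Propositions \ref{com1x}, \ref{com3} and the uniqueness of one-point compactifications) is sound. But there is a genuine gap in your handling of the \emph{other} panel type, and that is where the paper's proof does almost all of its work. Your claim that the auxiliary parameter systems are ``closed inside the relevant local field and therefore inherit a determined topology'' is simply false in most cases: in Case ${\mathcal Q}_{\mathcal Q}$ the root group $U_4$ is parametrized by the quadratic space $L$, in Case ${\mathcal Q}_{\mathcal P}$ by an anisotropic pseudo-quadratic space, and in Case ${\mathcal H}$ by the hexagonal system $J$ — these are $k$-modules of dimension possibly greater than one, not subsets of $k$, so they inherit nothing from $T_k$ for free. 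Even in Case ${\mathcal Q}_{\mathcal I}$, where $U_1\cong k_0\subseteq k$, it is not automatic that the building topology $T$ restricted to $U_1$ agrees with the subspace topology from $(k,T_k)$; that equality is precisely what has to be proved. Your proposed propagation mechanism also fails: the $\mu$-conjugation identities have the form $x_{i+2}(t)=x_i(t)^{\mu}$ or $x_{n+i}(t)=x_i(t)^\mu$, never $x_{i+1}=x_i^\mu$, so they only relate root groups within one parity class and never carry the topology from $U_p$ to $U_d$ when $n$ is even.

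The paper closes this gap by an argument you have not supplied: it first invokes Theorem \ref{abc1}(i) to cut the case list down to ${\mathcal T}$, ${\mathcal Q}_{\mathcal I}$, ${\mathcal Q}_{\mathcal Q}$, ${\mathcal Q}_{\mathcal P}$ and ${\mathcal H}$ (so your Case ${\mathcal Q}_{\mathcal D}$ discussion is moot), then introduces the valuation $\varphi$ of the root datum, whose restriction $\varphi_d$ is determined by $T_k$ up to equipollence and hence yields a topology $T_\varphi$ on $U_d$ depending only on $T_k$. The key step is to show, case by case, that the identity map $(U_d,T_d)\to(U_d,T_\varphi)$ is continuous; this uses explicit commutator formulas to produce continuous maps such as $x_4(u)\mapsto x_1(q(u))$ together with the formulas $\varphi_d=\nu\circ(\text{norm})/\delta$ from \cite{weiss3}, and then Corollary \ref{open} upgrades continuity to $T_d=T_\varphi$. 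Without some version of this argument your proof does not determine the topology on the second panel type, and hence does not determine $T$.
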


\begin{proof}
Let $R$ be a panel of $\Gamma$. By Corollary~\ref{pop1}, it will suffice
to show that the restriction of $T$ to $R$ is unique. 
Let $\Sigma$, $n$ and $U_i$ for $i\in{\mathbb N}$ be as in \ref{abc70}
and let $k$ and $\Xi$ be as in \ref{abc81}.
We can assume that $\Sigma$ is chosen so that $R=\Gamma_1$ or $\Gamma_n$. 
Our goal, therefore, is to show that
the restriction of the topology $T$ to the panel $\Gamma_1$ and 
the restriction of $T$ to the panel $\Gamma_n$ are unique. 

By Theorem~\ref{abc1}(i), $\Gamma$ is in one of 
the cases~${\mathcal T}$, ${\mathcal Q}_{\mathcal I}$,
${\mathcal Q}_{\mathcal Q}$, 
${\mathcal Q}_{\mathcal P}$ or~ ${\mathcal H}$ 
and, as we saw in Proposition~\ref{com7}, $k$ is not octonion. 
Case~${\mathcal Q}_{\mathcal D}$
is ruled out by \cite[28.14]{weiss3}, Cases~${\mathcal Q}_{\mathcal E}$ 
and~${\mathcal Q}_{\mathcal F}$
are ruled out, as we observed in the proof of Theorem~\ref{abc1}, by
\cite[28.11(viii)]{weiss3} and Case~${\mathcal O}$ is
impossible since by Proposition~\ref{abc101}, there are no octagonal sets
$(k,\sigma)$, as defined in \cite[10.11]{TW}, with $k$ local.
By Corollary~\ref{abc80a}(ii), either
$k$ or $\{k,k^{\mathrm{opp}}\}$ is an invariant of $\Gamma$. 
By Proposition~\ref{abc0a} and Remark~\ref{abc0b}, the topology on $k$ that makes it
into a locally compact field is uniquely determined by $\Gamma$.
We call this topology $T_k$. 
Replacing $k$ by $k^{\mathrm{opp}}$ if necessary, we can assume by \cite[35.15]{TW} 
that $R=\Gamma_1$ if $\Gamma$ is in Case~${\mathcal T}$. 
It thus suffices to show that the restriction of $T$
to the panel $\Gamma_1$ is uniquely determined by $T_k$
in Case ${\mathcal T}$ and that the restriction of $T$ to 
$\Gamma_p$ is uniquely determined by $T_k$
for both $p=1$ and $p=n$ in every other case.

From now on, let $p=1$ in Cases~${\mathcal T}$ and~${\mathcal Q}_{\mathcal Q}$
and $p=n$ in Cases~${\mathcal Q}_{\mathcal I}$, 
${\mathcal Q}_{\mathcal P}$ and~${\mathcal H}$.
By Proposition~\ref{com3}, the restriction of $T$ to $\Gamma_p$ is uniquely determined by
$T_k$. We can assume from now on that we are not in Case~${\mathcal T}$.
Let $d$ be the element of the set $\{1,n\}$ different from $p$.
It remains only to show that the restriction of $T$ to $\Gamma_d$ is
uniquely determined by $T_k$.

Let $\nu$ be the valuation of $k$.
By the results in the third column of \cite[Table 27.2]{weiss3}, there is a valuation
$\varphi=(\varphi_i)$ of the root datum of $\Gamma$ (where $\varphi_i$ is 
a map from $U_i^*$ to ${\mathbb Z}$ for each root $\alpha_i$ of $\Sigma$) such that 
$\varphi_p(x_p(t))=\nu(t)$ for all $t\in k$.
Let $T_\varphi$ be the topology
on $U_d$ induced by the metric associated with $\varphi_i$ 
as described in Proposition~\ref{abc3}. 
By \cite[3.41(iii)]{weiss3}, $\varphi$ is unique up to equipollence.
This means that $\varphi_d$ is uniquely
determined by $T_k$ up to an additive constant. Hence
the topology $T_\varphi$ depends only on $T_k$.

Now let $T_d$ denote the locally compact topology on $U_d$ that 
we identified in Proposition~\ref{abc6a}.
Suppose we know that the identity map from $U_d$ to itself
is a continuous map from $(U_d,T_d)$ to $(U_d,T_\varphi)$. 
By Theorem~\ref{AutIsMetrizable} and Corollary~\ref{ElationsAreContinuous},
$U_d$ is $\sigma$-compact.
By Corollary~\ref{open}, it follows that $T_d=T_\varphi$. 
Hence, by the conclusion of the previous paragraph,
$T_d$ is uniquely determined by $T_k$. By Proposition~\ref{com1x}, it follows that 
the restriction of $T$ to $\Gamma_d$ is also uniquely determined by $T_k$.

Suppose now that we are in Case~${\mathcal Q}_{\mathcal Q}$, where $\Xi=(k,L,q)$
and $d=4$. By \cite[16.11]{TW}, we can assume that there is an element $1$ in 
$L$ such that $q(1)=1$. Let $m=\mu(x_1(1))$ and $r=\mu(x_4(1))$.
By \cite[6.4(i) and 16.3]{TW}, we have
$$[x_1(1),x_4(u)^{-1}]=x_2(u)x_3(q(u)),$$
$x_2(u)=x_4(u)^m$ and $x_3(t)^r=x_1(t)$ for all $u\in L$ and all $t\in k$.
Hence
$$x_1(q(u))=\big(x_4(u)^{-m}x_4(u)^{x_1(1)}x_4(u)^{-1}\big)^r$$
for all $u\in L$. Therefore the map $x_4(u)\mapsto x_1(q(u))$ is continuous.
By \cite[19.20]{weiss3}, we can assume that
$\varphi_4(x_4(u))=\nu(q(u))/\delta$ for all $u\in L$, where $\delta$ is either
1 or 2. It follows that the identity map from $U_4$ to itself is a continuous
map from $(U_4,T_4)$ to $(U_4,T_\varphi)$ and hence the restriction of $T$ to $\Gamma_4$
is uniquely determined by $T_k$.

Suppose next that we are in Case~${\mathcal Q}_{\mathcal P}$, 
where $\Xi=(k,k_0,\sigma,L,q)$ and $d=1$. Let $m=\mu(x_1(0,1))$
and $r=\mu(x_4(1))$. By \cite[6.4(i) and 16.5]{TW}, we have
$$[x_1(a,t),x_4(1)^{-1}]=x_2(t)x_3(a,t)$$
and hence
$$x_4(t)=\big(x_1(a,t)^{-1}x_1(a,t)^{x_1(1)^{-1}}x_1(a,t)^{-r^{-1}}\big)^{m^{-1}}$$
for all $x_1(a,t)\in U_1$. Thus the map $x_1(a,t)\mapsto x_4(t)$ is 
continuous. By \cite[24.33]{weiss3}, we can assume that
$\varphi_1(x_1(a,t))=\nu(t)/\delta$ for all $x_1(a,t)\in U_1$,
where $\delta=1$ or $2$. 
It follows that the identity map from $U_1$ to itself is a continuous map
from $(U_1,T_1)$ to $(U_1,T_\varphi)$ and hence the restriction of $T$ to $\Gamma_1$
is uniquely determined by $T_k$.

Suppose next that we are in Case~${\mathcal Q}_{\mathcal I}$, 
where $\Xi=(k,k_0,\sigma)$ and $d=1$. Replacing \cite[16.5]{TW} by \cite[16.2]{TW},
\cite[24.33]{weiss3} by \cite[23.4]{weiss3} and $x_1(a,t)$ by $x_1(t)$
in the previous paragraph, we obtain a proof 
that the identity map from $U_1$ to itself is a continuous map
from $(U_1,T_1)$ to $(U_1,T_\varphi)$ and hence the restriction of $T$ to $\Gamma_1$
is uniquely determined by $T_k$ also in this case.

Suppose, finally, that we are in Case~${\mathcal H}$, where
$\Xi=(J,k,N,\#,T,\times,1)$ and $d=1$. 
This time, we set $r=\mu(x_1(1))$ and $m=\mu(x_6(1))$. 
By \cite[16.8 and 29.17]{TW},
$$[x_1(a),x_6(1)^{-1}]=x_2(N(a))x_3(-a^\#)x_4(N(a))x_5(a)$$
and hence
$$x_4(N(a))x_5(a^\#)x_6(-N(a))=\big(x_1(a)^{-1}x_1(a)^{x_6(1)^{-1}}x_1(a)^{-r^{-1}}\big)^m$$
for all $a\in J$. By Proposition~\ref{abc4}(ii),
each of the groups $U_4U_5U_6$, $U_4U_5$ and $U_6$ is closed. By \cite[Theorem~6.23]{stroppel}, 
therefore, the map $x_4(w)x_5(b)x_6(t)\mapsto x_6(t)$ from $U_4U_5U_6$
to $U_6$ is continuous. We conclude that the map $x_1(a)\mapsto x_6(-N(a))$
is continuous. By \cite[15.23]{weiss3}, we can assume that
$\varphi_1(x_1(a))=\nu(N(a))/\delta$ for all $a\in J$, where $\delta=1$ or $3$.
It follows that the identity map from $U_1$ to itself is a continuous
map from  $(U_1,T_1)$ to $(U_1,T_\varphi)$ and hence the restriction of $T$ to $\Gamma_1$
is uniquely determined by $T_k$.
\end{proof}

This concludes the proof of Theorem~\ref{rank2classification}.

\smallskip
\section{The proofs of Theorem~\ref{thm1} and Theorem~\ref{thm2}}\label{999}

\noindent
{\em Proof of Theorem} \ref{thm1}.
We assume that $\Delta$ is an irreducible infinite compact totally disconnected
spherical Moufang building of rank ${\it l}\geq 2$. Then the panels of
$\Delta$ are totally disconnected. Let $\Gamma$ be an arbitrary
irreducible residue of $\Delta$ of rank~2.
By Corollary~\ref{ResiduesAreCompact}, $\Gamma$ is a compact
totally disconnected building and by \cite[11.8]{weiss1}, $\Gamma$ is Moufang.
If $\Gamma_1,\Gamma_2$ are two irreducible
rank~2 residues such that $\Gamma_1\cap\Gamma_2$ contains a panel, then by
\cite[34.5]{TW}, $\Gamma_1$ is finite if and only if $\Gamma_2$ is
finite. Since $\Delta$ is irreducible, it follows that all the irreducible 
rank~2 residues of $\Delta$ are finite if one of them is. Since $\Delta$ is 
infinite, we conclude that 
$\Gamma$ is infinite. Hence by
Theorem~\ref{rank2classification}, the totally disconnected
compact topology on $\Gamma$ is unique. Since $\Gamma$ is arbitrary, 
we conclude that the totally disconnected
compact topology on every
panel of $\Delta$ is unique. By Corollary~\ref{pop1}, therefore, the totally disconnected
compact topology on the set $C$ of chambers of $\Delta$ is unique and by
Proposition~\ref{pop2}, $C$ endowed with this topology is homeomorphic to the Cantor set.

Suppose now that $f$ is an abstract automorphism of $\Delta$. Then the
topology of $\Delta$, transformed by $f$, is another totally disconnected
compact topology
which turns $\Delta$ into a compact building. Therefore, $f$ fixes
the topology. In other words, $f$ is a homeomorphism. Thus, ${\rm Aut}(\Delta)=
{\rm Auttop}(\Delta)$. 

Next we note that the defining field $F$ of $\Delta$ is a local field
by Theorem~\ref{rank2classification}. By \cite[27.2 and 27.6]{weiss3}, 
therefore, there exists a unique locally
finite Bruhat-Tits building $X$ whose building at infinity is $\Delta$.
By Proposition~\ref{abc97}, the compact topology on $\Delta$ coincides with the 
topology induced by the cone topology on $\partial X$.

Finally, consider the natural homomorphism $\mathrm{Iso}(X)\to\mathrm{Aut}(\Delta)$.
By Proposition~\ref{abc97} again, 
the group $\mathrm{Iso}(X)$ acts continuously on $\Delta$. Since $\mathrm{Aut}(\Delta)$
carries the compact-open topology, the map $\mathrm{Iso}(X)\to\mathrm{Aut}(\Delta)$
is continuous. It is also injective (an isometry that fixes $\Delta$ pointwise fixes
every apartment in $X$ and is therefore the identity). By \cite[26.37 and 26.40-2]{weiss3}, 
this map is also surjective. Since $\mathrm{Iso}(X)$ is locally compact and $\sigma$-compact by
Theorem~\ref{IsometriesOfProperSpace}, it
follows from Corollary~\ref{open} that this map is open and therefore a homeomorphism.
This finishes the proof of Theorem~\ref{thm1}.
\qed

\medskip
Before we proceed with the proof of Theorem \ref{thm2}, we add some
general remarks on the compact connected case and its history.
Building on Pontryagin's classification of locally compact connected
fields \cite{pontrjagin},
Kolmogorov \cite{kolmogoroff} classified compact connected
desarguesian projective spaces, showing that the defining field
is $\mathbb R,\mathbb C$ or the real quaternion division algebra
$\mathbb H$. 
Half a century later Salzmann
\cite{szm} classified all compact connected projective planes
admitting a flag transitive, or equivalently, chamber transitive,
group of continuous automorphisms. He proved that such a projective plane is the
Moufang plane over $\mathbb R,\mathbb C,\mathbb H$ or the real octonion division algebra
$\mathbb O$. These results cover the compact connected buildings of type $\mathsf A_n$.

Burns and Spatzier \cite{burns} classified all irreducible
compact connected buildings of rank at least~2 under the condition that they
admit a strongly
transitive group of continuous automorphisms; they call this 
the `topological Moufang condition'. In particular, they
classified all compact connected Moufang buildings of rank at least~2.
Their proof does not use the classification
of Moufang buildings. Instead, they use a characterization of Furstenberg
boundaries of simple Lie groups. They also use, as does Salzmann,
the solution of the 5th Hilbert problem.

Continuing in the vein of Salzmann's result, Grundh\"ofer, Knarr and Kramer
\cite{knarr,knarr2} classified all compact connected buildings
of rank at least~2 admitting a chamber transitive group of
continuous automorphisms. As we have seen, it suffices to deal with
the rank~2 case. The final result,
assembled from \cite{burns}, \cite{knarr,knarr2} and \cite[Ch.~7]{kramerhabil},
is as follows.

\begin{theorem}\label{ConnectedClass}
Let $\Delta$ be an irreducible compact and connected building
of rank ${\it l}$ at least~$2$. Suppose that the topological automorphism group
$\mathrm{Auttop}(\Delta)$ acts transitively on the chambers of $\Delta$.
Then $\Delta$ is the standard spherical building associated to a
noncompact centerless real or complex simple Lie group $G$ of rank~${\it l}$;
cf.~Proposition~{\rm \ref{pop3}}.

\end{theorem}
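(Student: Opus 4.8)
The plan is to split the argument according to rank, treating $l\geq 3$ and $l=2$ separately, since the chamber-transitivity hypothesis does genuine work only in the latter. For $l\geq 3$ I would first observe that the hypothesis is in fact automatic: by the theorem of Tits recalled after Definition~\ref{thm1x} (see \cite{tits-spherical} and \cite[11.6]{weiss1}), every thick irreducible spherical building of rank at least~$3$ satisfies the Moufang condition. Corollary~\ref{abc46} then gives $G^\dagger\subseteq\mathrm{Auttop}(\Delta)$ and shows that $\mathrm{Auttop}(\Delta)$ already acts strongly transitively, so that the ``topological Moufang condition'' of Burns and Spatzier holds. Their classification of strongly transitive compact connected buildings of rank at least~$2$ \cite{burns} --- obtained via a characterization of Furstenberg boundaries of simple Lie groups together with the solution of Hilbert's fifth problem, and independent of the classification of Moufang buildings --- then identifies $\Delta$ with the building of a noncompact centerless real or complex simple Lie group $G$ of rank~$l$, precisely the buildings constructed in Proposition~\ref{pop3}.

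For $l=2$, the substantive case, $\Delta$ is a compact connected generalized polygon on which $\mathrm{Auttop}(\Delta)$ acts flag transitively. Here the Moufang property is genuinely a consequence of the hypothesis rather than automatic --- in contrast with the totally disconnected planes mentioned after Definition~\ref{thm1x} --- and I would quote the classification of flag-homogeneous compact connected polygons directly: Salzmann \cite{szm} for the projective planes of type~$\mathsf A_2$, Knarr \cite{knarr} for the generalized quadrangles, and Grundh\"ofer--Knarr--Kramer \cite{knarr,knarr2} together with \cite[Ch.~7]{kramerhabil} for the general $n$-gon. The conclusion is that $\Delta$ is a standard Moufang polygon over one of $\mathbb R$, $\mathbb C$, $\mathbb H$ or $\mathbb O$, and each of these is the spherical building of a noncompact centerless real or complex simple Lie group of real rank~$2$ (the octonion plane corresponding to the rank-two real form $\mathsf E_6^{-26}$), again as described in Proposition~\ref{pop3}. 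Assembling the two cases yields the theorem.

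The main obstacle lies entirely in the rank-$2$ case, i.e.\ in the cited results of Salzmann, Knarr and Grundh\"ofer--Knarr--Kramer, whose hard content is to upgrade mere flag transitivity of a group of continuous automorphisms to Lie-theoretic rigidity. Schematically, one must argue that $\mathrm{Auttop}(\Delta)$ is a nondiscrete locally compact group acting transitively enough on a compact connected polygon to acquire a Lie group structure through Hilbert's fifth problem, and then exploit the geometry of the flag-transitive action to pin the polygon down to the four division-algebra models. The passage from rank~$2$ to higher rank, by contrast, requires no new idea in the present framework: it is effected solely by the automatic Moufang property and Corollary~\ref{abc46}. For this reason I would present the short rank~$\geq 3$ reduction first and then invoke the rank~$2$ classification as the essential input, exactly as the historical discussion preceding the statement suggests.
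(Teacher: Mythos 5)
Your proposal is correct and takes essentially the same route as the paper, which offers no proof of this theorem but explicitly presents it as assembled from \cite{burns}, \cite{knarr,knarr2} and \cite[Ch.~7]{kramerhabil} --- precisely the sources you invoke, with the rank~$\geq 3$ case reduced to the strongly transitive (Burns--Spatzier) setting via the automatic Moufang property and Corollary~\ref{abc46}, and the rank~$2$ case carried by the classification of flag-homogeneous compact connected polygons. The only small inaccuracy is attributional: \cite{knarr} is the joint paper of Grundh\"ofer, Knarr and Kramer rather than a paper of Knarr alone, and for quadrangles and hexagons the conclusion of that classification is stated directly in terms of rank-$2$ simple Lie groups rather than the four division algebras, but this does not affect the argument.
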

We remark that such a Lie group $G$ is simple as an abstract group
\cite[94.21]{blauesbuch}. In particular, $G=G^\dagger$ in the notation
of \ref{abc45}. If $G$ is not complex, then every abstract automorphism
of $G=G^\dagger$ is automatically continuous \cite{freudenthal}.
This shows that every abstract automorphism of $\Delta$ is also
continuous. If $G$ is complex, then the automorphism group of $G=G^\dagger$
is the group of all continuous automorphisms, extended by the field
automorphisms of $\mathbb C$; see \cite{borel-tits}.
Thus, an abstract automorphism of $G$ or $\Delta$ need not be continuous
in this situation.
Nevertheless, we have the following result.

\begin{proposition}\label{uniquenessconnectedcase}
Let $\Delta$ be a compact connected irreducible Moufang building of rank
at least~$2$. If the associated simple Lie group $G$ is not complex, or equivalently,
absolutely simple as a real algebraic group,
then there is a unique topology on $\Delta$ which turns
$\Delta$ into a compact building. If $G$ is complex, then all topologies that
turn $\Delta$ into a compact building are conjugate under ${\rm Aut}(\mathbb C)$.
\end{proposition}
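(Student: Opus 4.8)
The plan is to run the connected analogue of the argument behind Theorem~\ref{thm1}, replacing the totally disconnected rank~2 input (Theorem~\ref{rank2classification}) by the Lie-theoretic input of Theorem~\ref{ConnectedClass}. First I would observe that a compact building topology on $\Delta$ forces $\mathrm{Auttop}(\Delta)$ to be manageable: by Corollary~\ref{abc46} it acts strongly, in particular chamber, transitively, and by Theorem~\ref{AutIsMetrizable} it is a second countable, hence $\sigma$-compact, locally compact group. Applying Theorem~\ref{ConnectedClass} together with Proposition~\ref{pop3} to any such topology, $\Delta$ is the standard building of the simple Lie group $G=G^\dagger$ and the given compact topology realizes the chamber set, via Theorem~\ref{OpenAction}, as the Furstenberg boundary $G^\dagger/B$, where $B$ is the stabilizer of a chamber $c$. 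The key bookkeeping point is that $G^\dagger$ and the subgroup $B$ are defined purely abstractly from $\Delta$ (as the group generated by the root groups and a chamber stabilizer therein), so they do not depend on the chosen topology; only the group topology that a compact building topology $T$ induces on the fixed abstract group $G^\dagger$, call it $\tau_T$, varies. Thus each admissible $T$ is recovered as the quotient topology of $(G^\dagger,\tau_T)/B$, and comparing two topologies is reduced to comparing two Lie group topologies on one abstract group.

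Next I would take two compact building topologies $T_0$ and $T_1$ and view the identity map of $G^\dagger$ as an abstract automorphism $(G^\dagger,\tau_{T_0})\to(G^\dagger,\tau_{T_1})$ of the abstractly simple Lie group $G^\dagger$. If $G$ is not complex, then by \cite{freudenthal} this abstract automorphism is continuous; since both sides are locally compact and $\sigma$-compact, Corollary~\ref{open} upgrades it to a topological isomorphism, so $\tau_{T_0}=\tau_{T_1}$. Passing to the quotient by the common subgroup $B$ gives a continuous bijection of the two chamber sets which, being a map between compact Hausdorff spaces, is a homeomorphism; hence $T_0$ and $T_1$ agree on chambers and therefore, the vertex sets being equivariant quotients, on all of $\Delta$. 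This yields the asserted uniqueness in the non-complex case.

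If $G$ is complex, the same identity map is an abstract automorphism of a complex simple Lie group, and by \cite{borel-tits} it is continuous up to a field automorphism: there is $\sigma\in\mathrm{Aut}(\mathbb{C})$ so that, after composing with the automorphism of $\Delta$ that $\sigma$ induces on the standard building of $G$, the resulting map $(\Delta,T_0)\to(\Delta,T_1)$ is continuous and hence a homeomorphism. Therefore $T_1$ is the image of $T_0$ under the field automorphism $\sigma$, which is exactly the statement that all compact building topologies are conjugate under $\mathrm{Aut}(\mathbb{C})$.

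The step I expect to be the main obstacle is the first one: rigorously extracting the Lie group topology $\tau_T$ on $G^\dagger$ from an a priori merely compact building topology and identifying $(\Delta,T)$ with $G^\dagger/B$. This is where the solution of Hilbert's fifth problem and the Burns--Spatzier identification underlying Theorem~\ref{ConnectedClass} do the real work, and one must check that their hypotheses, namely strong transitivity of a locally compact automorphism group on a compact connected building, hold for every compact building topology; this is guaranteed by Corollary~\ref{abc46}. A purely rank~2 alternative, reducing to panels by Corollary~\ref{pop1}, recognizing each panel as a compact projective line and invoking Theorem~\ref{TopHua} and Proposition~\ref{abc65} to pin the field down to $\mathbb{R}$, $\mathbb{C}$, $\mathbb{H}$ or $\mathbb{O}$, recovers the dichotomy locally, but then one would still have to show that the field automorphisms appearing on the various panels in the complex case cohere to a single element of $\mathrm{Aut}(\mathbb{C})$. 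The global statement of \cite{borel-tits} is precisely what supplies this coherence, which is why I would organize the proof around $G^\dagger$ rather than around individual panels.
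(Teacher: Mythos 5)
Your argument in the connected case is essentially the paper's: identify $(\Delta,T)$ with the standard building of a simple Lie group via Theorem~\ref{ConnectedClass}, then invoke \cite{freudenthal} (real case) or \cite{borel-tits} (complex case) to make the resulting abstract isomorphism of $G^\dagger$ continuous, possibly after composing with a field automorphism. But there is a genuine gap at the very first step: you apply Theorem~\ref{ConnectedClass} to ``any such topology'' $T$, and that theorem requires $(\Delta,T)$ to be compact \emph{and connected}. The hypothesis of the proposition only supplies one topology in which the chamber set is connected; a second compact building topology $T$ on the same abstract building is, by Proposition~\ref{pop2}, either connected or totally disconnected, and nothing in your proposal excludes the latter alternative. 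The paper devotes the second half of its proof precisely to this case: if $T$ were totally disconnected, then $\Delta$ would fall under the classification of the present paper (Theorems~\ref{thm1} and~\ref{rank2classification}), so its defining field would be a local field in the sense of Definition~\ref{abc0}, with center a finite extension of $\mathbb{Q}_p$ or a Laurent series field over a finite field; since the defining field is an invariant of the abstract building (up to the ambiguities listed in Notation~\ref{pop11}) and here equals $\mathbb{R}$, $\mathbb{C}$, $\mathbb{H}$ or $\mathbb{O}$, this is impossible, e.g.\ by \cite[54.2]{salzmann}. Without this exclusion your reduction to comparing two Lie group topologies on the fixed abstract group $G^\dagger$ does not get off the ground.

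Apart from that omission, your elaboration of the connected case --- recovering $T$ as the quotient topology on $G^\dagger/B$, noting that $G^\dagger$ and $B$ are defined purely abstractly, and using Corollary~\ref{open} to upgrade the continuous abstract automorphism to a topological isomorphism --- is a correct and somewhat more detailed version of what the paper dispatches in one sentence, and your closing observation that \cite{borel-tits} is what makes the field automorphisms on the various panels cohere into a single element of $\mathrm{Aut}(\mathbb{C})$ is exactly the right reason to organize the argument around $G^\dagger$ rather than panel by panel.
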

\begin{proof}
First of all we note that the defining field $F$ of $\Delta$ is
$\mathbb R,\mathbb C$, the real quaternion division algebra $\mathbb H$ or 
the real Cayley division algebra $\mathbb O$.
Suppose that $T$ is a compact topology on the chambers of $\Delta$ that turns $\Delta$
into a compact building. If the set of chambers $C$ is connected in this
topology, then $\Delta$ comes from a simple
Lie group $H$ by Theorem~\ref{ConnectedClass}. Therefore there is an abstract
isomorphism $H=H^\dagger\cong G^\dagger=G$. If $G$ is real, this isomorphism
is continuous by \cite{freudenthal} and if $G$ is complex, then the
isomorphism becomes continuous after composing it with a field automorphism
by \cite[8.1]{borel-tits}; see also 
\cite[5.8--5.9]{tits-spherical} for a more detailed statement.

Next, assume that the topology $T$ on $C$ is not connected. Then it is totally
disconnected by Proposition~\ref{pop2} and $\Delta$ is one of the buildings classified in the 
present paper. Therefore the center of the defining field $F$ is
a finite extension of the $p$-adic field $\mathbb Q_p$ for some prime $p$.
Neither $\mathbb R$ nor $\mathbb C$ is, however, isomorphic to
a finite extension of $\mathbb Q_p$ by, for example, \cite[54.2]{salzmann}.
\end{proof}
We remark that there is another approach to the proof of 
Proposition~\ref{uniquenessconnectedcase}:
it can be shown that a simple real or complex Lie group admits only
one locally compact and $\sigma$-compact group topology up to conjugation
by an abstract automorphism if $G$ happens to be complex; see 
\cite{kramergroups}.

\medskip
\noindent
{\em Proof of Theorem} \ref{thm2}.
The special automorphism group of a reducible building factors as a product
of the automorphism groups of the factors. In the compact case, this
factorization is compatible with the topology because the elementwise stabilizer
of any residue
is closed in $\mathrm{Auttop}(\Delta)$. In this way the problem can be reduced
to the irreducible case. Now the result follows from a combination of 
Theorems~\ref{thm1} and~\ref{ConnectedClass} and Proposition~\ref{uniquenessconnectedcase}.
\qed

\bigskip


\begin{thebibliography}{99}
\bibitem{abramenko}
P. Abramenko\ and\ K. S. Brown, {\it Buildings, theory and application}, Springer, New York, 2008.
\bibitem{borel-tits} A. Borel and J. Tits, 
Homomorphismes ``abstraits'' de groupes alg\'ebriques simples, 
{\it Ann. Math.} {\bf 97} (1973), 499--571.
\bibitem{bridson} M. R. Bridson and A. Haefliger, {\it Metric spaces of non-positive curvature},
Springer, Berlin, Heidelberg, New York, 1999.
\bibitem{brown} K. S. Brown, {\it Buildings}, Springer, Berlin, New York, Heidelberg, 1998. 
\bibitem{bruck} R. H. Bruck and E. Kleinfeld, The structure of alternative division 
rings, {\it Proc. Amer. Math. Soc.} {\bf2} (1951), 878--890.
\bibitem{bruh} F. Bruhat and J. Tits, Groupes alg\'ebriques simples sur un corps
local, I. Donn\'ees radicielles valu\'ees, {\it Inst. Hautes \'Etudes Sci. Publ. Math.} {\bf41} (1972),
5--251.
\bibitem{burns} K. Burns and R. Spatzier, On topological Tits buildings and their
classification, {\it Inst. Hautes \'Etudes Sci. Publ. Math.} {\bf 65} (1987), 5--34.
\bibitem{dug} J. Dugundji, {\it Topology}, Allyn \&\ Bacon, Boston, 1966. 
\bibitem{eberlein} P. B. Eberlein, {\it Geometry of nonpositively curved manifolds},
University of Chicago Press, Chicago, 1996.
\bibitem{eng} R. Engelking, {\it General topology}, 2nd edition, Heldermann, Berlin, 1989. 
\bibitem{freudenthal} H. Freudenthal, Die Topologie der Lieschen Gruppen 
als algebraisches Ph\"anomen. I, {\it Ann. of Math. (2)} {\bf 42} (1941), 1051--1074.
\bibitem{grund} T. Grundh\"ofer, Compact disconnected Moufang planes are Desarguesian,
{\it Arch. Math. (Basel)} {\bf49} (1987), 124--126.
\bibitem{grund1} T. Grundh\"ofer, Compact disconnected planes, inverse limits and homomorphisms, 
{\it Monatsh. Math.} {\bf105} (1988), 261--277.
\bibitem{knarr} T. Grundh\"ofer, N. Knarr and L. Kramer, Flag-homogeneous compact
connected polygons, {\it Geom. Dedicata} {\bf55} (1995), 95--114.
\bibitem{knarr2} T. Grundh\"ofer, N. Knarr\ and\ L. Kramer, Flag-homogeneous 
compact connected polygons. II, Geom. Dedicata {\bf 83} (2000), 1--29. 
\bibitem{vanmaldeghem} T. Grundh\"ofer\ and\ H. Van Maldeghem, Topological 
polygons and affine buildings of rank three, {\it Atti Sem. Mat. Fis. Univ. Modena} 
{\bf 38} (1990), 459--479. 
\bibitem{helgason} S. Helgason, {\it Differential geometry, Lie groups, and symmetric spaces}, 
Amer. Math. Soc., Providence, RI, 2001.
\bibitem{hewitt} E. Hewitt and K. A. Ross, {\it Abstract harmonic analysis} Vol. I,
Academic Press, New York, 1963.
\bibitem{hocking} J. G. Hocking\ and\ G. S. Young, {\it Topology}, Addison-Wesley 
Publishing Co., Inc., Reading, Mass., 1961.
\bibitem{humphreys} J. E. Humphreys, {\it Reflection groups and Coxeter groups}, 
Cambridge University Press, Cambridge, 1990. 
\bibitem{kleinfeld} E. Kleinfeld, Alternative division rings of characterisitic~2, 
{\it Proc. Nat. Acad. Sci. U. S.} {\bf37} (1951), 818--820
\bibitem{kolmogoroff} A. Kolmogoroff, Zur Begr\"undung der projektiven 
Geometrie, {\it Ann. of Math. (2)} {\bf 33} (1932), 175--176.
\bibitem{kramer-diss} L. Kramer, {\it Compact polygons}, Dissertation, 
Math. Fak. Univ. T\"ubingen (1994), 72~pp., arXiv:math/0104064.
\bibitem{kramerhabil} L. Kramer, Homogeneous spaces, Tits buildings, and 
isoparametric hypersurfaces, {\it Mem. Amer. Math. Soc.} {\bf 158} (2002), xvi+114pp.
\bibitem{kramergroups} L. Kramer, The topology of a simple Lie group is essentially unique,
{\it Adv. Math.}, to appear; arXiv:1009.5457.
\bibitem{petersson} H. P. Petersson, Lokal kompakte Jordan-Divisionsringe,
{\it Abh. Math. Sem. Univ. Hamburg} {\bf39} (1973), 164--179.
\bibitem{pontrjagin} L. Pontrjagin, \"Uber stetige algebraische K\"orper, 
{\it Ann. Math.} {\bf 33} (1932), 163--174.
\bibitem{ronan} M. Ronan, {\it Lectures on buildings}, University of Chicago Press,
Chicago, 2009.
\bibitem{szm} H. Salzmann, Homogene kompakte projektive Ebenen, 
{\it Pacific J. Math.} {\bf 60} (1975), 217--234.
\bibitem{blauesbuch} H. Salzmann\ et al., {\it Compact projective planes}, de Gruyter, Berlin, 1995.
\bibitem{salzmann} H. Salzmann, T. Grundh\"ofer, H. H\"ahl and R. L\"owen, 
{\it The classical fields}, Cambridge University Press, Cambridge, 2007.
\bibitem{serre} J.-P. Serre, {\it Local fields}, Springer, Berlin, New York, Heidelberg,
1979.
\bibitem{stroppel} M. Stroppel, {\it Locally compact groups}, EMS Textbk. Math., 
Europ. Math. Soc., Zurich, 2006.
\bibitem{tits-spherical} J. Tits, {\it Buildings of spherical type and finite BN-pairs}, 
Lecture Notes in Math. {\bf386}, Springer, Berlin, New York, Heidelberg, 1974. 
\bibitem{tits-local} J. Tits, Reductive groups over local fields, 
in {\it Proc. Symp. Pure Math.} {\bf33}, Part 1 
({\it Automorphic forms, representations and $L$-functions}, Corvallis
1977), 29--69, {\it Amer. Math. Soc.}, Providence, 1979.
\bibitem{como} J. Tits, Immeubles de type affine, in 
{\it Buildings and the geometry of diagrams (Como, 1984)}, 
159--190, Lecture Notes in Math. {\bf1181},
Springer, Berlin, New York, Heidelberg, 1974.
\bibitem{TW} J. Tits and R. M. Weiss, {\it Moufang polygons}, Springer, Berlin, Heidelberg,
New York, 2002.
\bibitem{wadsworth} A. R.  Wadsworth, Valuation theory on finite dimensional division 
algebras, in 
{\it Valuation theory and its applications, Vol. I (Saskatoon, 1999)}, 385--449
Field Inst. Commun. {\bf32}. Amer. Math. Soc., Providence, 2002.
\bibitem{gwarner} G. Warner, {\it Harmonic analysis on semi-simple Lie groups. I},
Springer, New York, Heidelberg, 1972.
\bibitem{warner1} S. Warner, Locally compact vector spaces and algebras over discrete
fields, {\it Trans. Amer. Math. Soc.} {\bf130} (1968), 463-493.
\bibitem{warner} S. Warner, {\it Topological fields}, North-Holland, Amsterdam, 1989.
\bibitem{weil} A. Weil, {\it Basic number theory}, Springer, Berlin, Heidelberg, New York, 1995.
\bibitem{weiss1} R. M. Weiss, {\it The structure of spherical buildings}, Princeton
University Press, Princeton, 2003.
\bibitem{weiss2} R. M. Weiss, {\it Quadrangular algebras}, Math. Notes {\bf46}, 
Princeton University Press, Princeton, 2006.
\bibitem{weiss3} R. M. Weiss, {\it The structure of affine buildings}, Ann. Math. Stud. {\bf168},
Princeton University Press, Princeton, 2009.
\end{thebibliography}
\end{document}